\newtheorem{theorem}{Theorem}[section]
\newtheorem{corollary}[theorem]{Corollary}
\newtheorem{lemma}[theorem]{Lemma}
\newtheorem{proposition}[theorem]{Proposition}
\newtheorem{problem}[theorem]{Problem}
\newtheorem{mainthm}[theorem]{Main Theorem}
\theoremstyle{definition}
\newtheorem{definition}[theorem]{Definition}
\newtheorem{example}[theorem]{Example}
\newtheorem{claim}[theorem]{Claim}
\newtheorem{conjecture}[theorem]{Conjecture}
\numberwithin{equation}{section}
\newcommand{\R}{\mathbb R}
\newcommand{\F}{\mathcal F}
\newcommand{\w}{\omega}
\newcommand{\Ra}{\Rightarrow}
\newcommand{\var}{\varnothing}
\newcommand{\cl}{\operatorname{cl}}
\newcommand{\cls}{\operatorname{cls}}
\begin{document}


\baselineskip=17pt


\title{Countably compact inverse semigroups and Nyikos' problem}

\author{Serhii Bardyla\\
Institute of Mathematics\\ 
University of Vienna\\
Kolingasse 14-16\\
1090 Vienna, Austria\\
E-mail: sbardyla@gmail.com
}

\date{}

\maketitle


\renewcommand{\thefootnote}{}

\footnote{2020 \emph{Mathematics Subject Classification}: Primary 20M18, 22A15, 22A26, 54D30.}

\footnote{\emph{Key words and phrases}: Nyikos' problem, countably compact semigroup, compact inverse semigroup, continuity of inversion.}

\renewcommand{\thefootnote}{\arabic{footnote}}
\setcounter{footnote}{0}


\begin{abstract}
A regular separable first-countable countably compact space is called a {\em Nyikos} space.
In this paper, we give a partial solution to an old problem of Nyikos by showing that each locally compact Nyikos inverse topological semigroup is compact. Also, we show that a topological semigroup $S$ that contains a dense inverse subsemigroup is a topological inverse semigroup, provided (i) $S$ is compact, or (ii) $S$ is countably compact and sequential. The latter result solves a problem of Banakh and Pastukhova and provides the automatic continuity of inversion in certain compact-like inverse semigroups.
\end{abstract}

\section{Introduction}
All topological spaces in this paper are assumed to be {\em Hausdorff}. A regular separable first-countable countably compact space is called a {\em Nyikos} space.
Recall that a space $X$ is called {\em separable} if $X$ has a  countable dense subset; 
{\em first-countable} if each point of $X$ has a countable neighborhood base; {\em countably compact} if each infinite subset of $X$ has an accumulation point.
The following problem posed by Nyikos in 1986 is listed among 20 central problems in Set-theoretic Topology by Hru\v{s}ak and Moore~\cite{HM} (for other occurrences of this problem see~\cite{Ny1,Ny2,Prob}).      


\begin{problem}[Nyikos]\label{pN}
Does ZFC imply the existence of a noncompact Nyikos space?
\end{problem}

Franklin and Rajagopalan~\cite{FR} constructed a normal locally compact noncompact Nyikos space under the consistent assumption $\w_1=\mathfrak t$ (for basic information about cardinal characteristics of the continuum we refer the reader to~\cite{Blass}). Ostaszewski~\cite{Ost} constructed a perfectly normal locally compact hereditary separable noncompact Nyikos space assuming $\diamondsuit$ (for more about the diamond axiom see~\cite[Chapter II.7]{Kun}). Later van Douwen observed that Ostaszewski's arguments yield the existence of a locally compact noncompact Nyikos space under $\mathfrak b=\mathfrak c$. Bardyla and Zdomskyy~\cite{BZ} constructed a consistent example of a Nyikos space which is $\R$-rigid, i.e. admits only constant continuous real-valued functions. Note that $\R$-rigid spaces, being not Tychonoff, are not locally compact. Moreover, Bardyla, Nyikos and Zdomskyy~\cite{BNZ} showed that assuming $\w_1<\mathfrak b=\mathfrak s=\mathfrak c$, every regular separable first-countable non-normal space of weight $<\mathfrak c$ embeds into an $\R$-rigid
Nyikos space.  On the other hand, Weiss~\cite{Weiss} proved that under Martin's Axiom every perfectly normal Nyikos space is compact. Nyikos and Zdomskyy~\cite{NZ} showed that the Proper Forcing Axiom, or briefly PFA, implies that each normal Nyikos space is compact. More information about PFA can be found in~\cite{Moore,Viale}. For other fruitful applications of PFA in Topology see~\cite{PFA2,PFA3,PFA}.

A semigroup $S$ endowed with a topology is called a {\em topological semigroup} if the semigroup operation viewed as a map from $S{\times}S$ to $S$ is continuous.
For the sake of brevity, we call a topological semigroup {\em Nyikos} if its underlying space is Nyikos.  Each topological space $X$ possesses a continuous semigroup operation. For example, for a fixed $a\in X$ one can put $xy=a$ for all $x,y\in X$. Thus the existence of noncompact Nyikos topological semigroups is consistent with ZFC. In this paper, we consider the following natural problem:


\begin{problem}\label{my}
Which Nyikos topological semigroups are compact?    
\end{problem}

Note that any ZFC solution of Problem~\ref{my} can be viewed as a partial answer to Nyikos problem. 
Countably compact topological semigroups and groups are widely studied in Topological Algebra. In particular, much research has been done on convergent sequences in countably compact topological groups~\cite{BRT, BCST, DT,D,GTW,HvM, HMRS, KTW, T1,T2}, and countably compact cancellative semigroups~\cite{Grant,MT,RS,Wall}. Recall that a semigroup $S$ is called {\em cancellative} if for every $a,b,c\in S$ each of the equalities $ab=ac$ and $ba=ca$ implies $b=c$. It is well known that a metrizable countably compact space is compact. By the classical Birkhoff-Kakutani theorem, each first-countable topological group is metrizable.
Then Corollary~5 from~\cite{MT} implies the following.



\begin{theorem}[Mukherjea, Tserpes]\label{MU}
Each first-countable countably compact cancellative topological semigroup is compact.   \end{theorem}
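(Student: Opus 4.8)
The plan is to reduce the statement to the assertion that $S$ is a \emph{topological group}, after which the conclusion is automatic: by the Birkhoff--Kakutani theorem a first-countable topological group is metrizable, and a metrizable countably compact space is compact. So the whole problem is to show that a first-countable countably compact cancellative topological semigroup $S$ is a topological group; this is essentially Corollary~5 of~\cite{MT}, and I indicate below how one would argue it. First note that $S$ is sequentially compact: any sequence in $S$ either takes some value infinitely often, yielding a constant convergent subsequence, or ranges over an infinite set, which has an accumulation point, and in a first-countable space an accumulation point of a sequence is the limit of a subsequence.

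The first substantive step is to produce an idempotent $e\in S$. If some $a\in S$ has only finitely many distinct powers, then the subsemigroup generated by $a$ is a finite cancellative semigroup, hence a group, and its identity is an idempotent of $S$. Otherwise every $a\in S$ has infinitely many distinct powers, and one must extract an idempotent from a cluster point of the sequence $(a^n)_{n\in\N}$; I expect this to be the main obstacle. Once an idempotent $e$ is available, cancellativity promotes it to a two-sided identity of all of $S$: from $e^2=e$ we get $e(ex)=(ee)x=ex$, so $ex=x$ by left cancellation, and symmetrically $xe=x$; thus $S$ is a monoid with identity $e$, which is then its only idempotent. Next I would show that every $a\in S$ is invertible: the closed (hence countably compact, first-countable, cancellative) subsemigroup $\overline{\{a^n:n\in\N\}}$ contains an idempotent by the same argument, and this idempotent must equal $e$; so $a^{n_k}\to e$ along some subsequence, and passing to a further subsequence along which $a^{n_k-1}\to b$, continuity of multiplication gives $ba=ab=e$. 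Hence $S$ is algebraically a group with continuous multiplication. Finally, inversion is continuous: if $x_n\to x$ and $y$ is the limit of some subsequence of $(x_n^{-1})$ (which exists by sequential compactness), then $xy=\lim x_nx_n^{-1}=e$, so $y=x^{-1}$; since every convergent subsequence of $(x_n^{-1})$ has limit $x^{-1}$, sequential compactness together with the Hausdorff property forces $x_n^{-1}\to x^{-1}$. Therefore $S$ is a topological group, and the theorem follows.

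The hard part is the construction of the idempotent when all powers of $a$ are distinct. The naive subsequence arguments only relate a cluster point $p$ of $(a^n)_{n\in\N}$ to $p^2$ (e.g.\ if $a^{n_k}\to p$ and, after thinning, $a^{n_{k+1}-n_k}\to q$, one gets $pq=p$) without ever collapsing $p$ to an idempotent; and the classical remedy, the Ellis--Numakura minimal-closed-subsemigroup argument, relies on a Zorn step that genuinely requires compactness rather than mere countable compactness (a decreasing chain of nonempty closed subsemigroups of a countably compact space can have empty intersection). Replacing that step by a sufficiently careful transfinite or sequential argument that exploits cancellativity is exactly the point at which one invokes the work behind \cite[Corollary~5]{MT}.
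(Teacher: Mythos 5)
Your proposal is correct and follows the same route as the paper, which likewise obtains the result by combining \cite[Corollary 5]{MT} (a first-countable countably compact cancellative topological semigroup is a topological group) with the Birkhoff--Kakutani theorem and the fact that a metrizable countably compact space is compact. One remark: the step you single out as the main obstacle is actually immediate from cancellativity --- from the relation $pq=p$ that you already derived, right multiplication by $q$ gives $p(qq)=pq$, and cancelling $p$ on the left yields $qq=q$, so the cluster point $q$ is itself the desired idempotent and no Ellis--Numakura/Zorn argument is needed; with this observation your sketch of the group-ness is essentially complete.
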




A commutative semigroup of idempotents is called a {\em semilattice}. Groups and semilattices are included in a much larger class of inverse semigroups. Recall that a semigroup $S$ is called {\em inverse} if for each $x\in S$ there exists a unique $y\in X$ such that $xyx=x$ and $yxy=y$. The element $y$ is called the {\em inverse} of $x$ and is denoted by $x^{-1}$. The map $x\mapsto x^{-1}$ is called {\em inversion}. An inverse topological semigroup with continuous inversion is called a {\em topological inverse} semigroup.
The algebraic theory of inverse semigroups is well developed, see the monographs~\cite{Howie,Lawson,Petrich1984}. Topological inverse  semigroups were investigated in~\cite{BB, BBK, BDG, BP, B, our,main,main1, GRep, GR, HS, MPU}.

The following partial solution of Problems~\ref{pN} and~\ref{my} is the main result of this paper.

\begin{mainthm}\label{main}
A locally compact Nyikos inverse topological semigroup is compact.
\end{mainthm}


By $(\w_1,\min)$ and $(\omega_1,\max)$ we denote the first uncountable cardinal $\w_1$ endowed with the semilattice operations $\min$ and $\max$, respectively.
The proof of Main Theorem~\ref{main} is based on the following two results that are of independent interest. The first one characterizes compact Nyikos topological semilattices in terms of their linear subsets (or briefly chains).  

\begin{theorem}\label{semilattice}
For a Tychonoff Nyikos topological semilattice $X$ the following assertions are equivalent:
\begin{enumerate}[label=(\roman*)]
    \item $X$ is compact;
    \item no chain in $X$ is isomorphic to  $(\w_1,\min)$ or $(\w_1,\max)$;
    \item no chain in $X$ is topologically isomorphic to $(\w_1,\min)$ or $(\w_1,\max)$ endowed with the order topology.
\end{enumerate}        
\end{theorem}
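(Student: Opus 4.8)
The implications $(i)\Rightarrow(iii)\Rightarrow(ii)$ are routine: a compact space contains no copy of the non-compact space $(\w_1,\min)$ or $(\w_1,\max)$ with the order topology (these are locally compact but not compact), and a topological isomorphism onto such a chain would contradict compactness; while $(iii)\Rightarrow(ii)$ requires only that an abstract semilattice isomorphism onto $(\w_1,\min)$ or $(\w_1,\max)$ be upgraded to a topological one, which I would do by showing that the subspace topology a chain inherits from $X$ must refine the order topology, hence equal it on a countably compact chain (see below). So the whole content is $(ii)\Rightarrow(i)$, equivalently the contrapositive: if a Tychonoff Nyikos semilattice $X$ is \emph{not} compact, then $X$ contains a chain order-isomorphic to $(\w_1,\min)$ or to $(\w_1,\max)$.

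**Main argument for $\neg(i)\Rightarrow\neg(ii)$.** Fix a countable dense set $D=\{d_n:n\in\w\}$ in $X$. Since $X$ is Tychonoff but not compact, $X$ is not closed in its Stone–\v Cech (or one-point, via local-compactness considerations available once we pass to the Main-Theorem setting — but here only Tychonoff is assumed, so I'd use $\beta X$) compactification, equivalently $X$ fails to be countably compact as a consequence of \dots\ — no: $X$ \emph{is} countably compact, so the obstruction to compactness is the lack of the finite-intersection / net-convergence property for uncountable families. The key is a transfinite construction of length $\w_1$. The idea: build an increasing (or decreasing) $\w_1$-chain $\{x_\alpha:\alpha<\w_1\}$ in the semilattice together with a strictly decreasing chain of closed neighborhoods or using the countable dense set to ``localize'' each $x_\alpha$, so that the set $\{x_\alpha\}$ is closed and discrete — which, being uncountable, contradicts countable compactness unless \dots\ it is in fact a copy of $(\w_1,\le)$. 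More concretely, I would use that a first-countable countably compact semilattice in which every chain is countable must have the property that every monotone net converges (to its inf or sup in the semilattice, which exists and is a topological limit because multiplication is continuous and idempotent); then a standard closing-off argument using separability produces a compact (equivalently, in the Nyikos setting, convergent-complete) structure, forcing $X$ itself to be compact. Turning this around: non-compactness yields an uncountable monotone net with no convergent subnet, and first-countability plus countable compactness let me thin it to a genuinely order-isomorphic copy of $\w_1$ — increasing gives $(\w_1,\max)$ (its sup-chain), decreasing gives $(\w_1,\min)$.

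**Where the real work is.** The hard part is producing the \emph{long chain} from mere non-compactness: a priori a non-compact countably compact space need have no long chains at all, so one must exploit the semilattice structure essentially. I expect to argue as follows. By Tychonoffness pick a continuous $f:X\to[0,1]$ witnessing non-pseudocompactness is \emph{not} available (countably compact $\Rightarrow$ pseudocompact), so instead: take an open cover with no finite subcover; by first-countability and separability refine to a countable open cover $\{U_n\}$ with no finite subcover, so the closed sets $F_n=X\setminus\bigcup_{k\le n}U_k$ are nonempty, decreasing, with empty intersection. Using the semilattice operation, replace $F_n$ by the ``downward saturation'' or pick $y_n\in F_n$ and form partial products; the monotone sequence of idempotents $e_n=y_1y_2\cdots y_n$ is decreasing, and its set of ``cluster-limits'' (which exist by countable compactness) must be empty in a suitable sense, and iterating this over $\w_1$ via the dense set $D$ — at stage $\alpha$ choosing the next element inside the closed set carved out by countably many previous constraints, which is nonempty precisely because a countable intersection condition does not yet fail — yields the required $\w_1$-chain. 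The delicate points are: (a) ensuring the chain is \emph{strictly} monotone and order-isomorphic to $\w_1$ rather than a shorter or non-well-ordered type, which first-countability handles by letting each new point avoid a basic neighborhood of the ``limit'' of the previous $\w$-block; and (b) arranging that the resulting chain's subspace topology is the order topology, which follows because a countably compact subspace of a first-countable space that is a chain and has the order topology as a coarsening must coincide with it on every countable initial segment, hence globally. I'd expect step (a) and the verification that the construction does not terminate before $\w_1$ (i.e. the relevant countable intersections stay nonempty, using countable compactness at limit stages of cofinality $\w$) to be the crux.
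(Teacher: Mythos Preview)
Your argument for $\neg(i)\Rightarrow\neg(ii)$ has a fatal contradiction at its starting point. You propose to find a countable open cover $\{U_n\}$ with no finite subcover, so that the sets $F_n=X\setminus\bigcup_{k\le n}U_k$ form a decreasing sequence of nonempty closed sets with empty intersection. But $X$ is countably compact, so every countable open cover \emph{has} a finite subcover; equivalently, any such sequence $(F_n)$ has nonempty intersection. Separability and first-countability do not yield Lindel\"ofness here, and even if they did, Lindel\"of plus countably compact would give compactness outright. With this step gone, your transfinite construction has no seed: you never identify an $\w_1$-indexed system of nonempty closed conditions (or any other $\w_1$-length obstruction) on which to run the recursion, and the sketch that follows (``iterating over $\w_1$ via the dense set $D$'') remains pure aspiration. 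The ``hard part'' you flag---producing a long chain from mere non-compactness---is exactly the part you have not done.

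The paper sidesteps this entirely by running the cycle as $(i)\Rightarrow(ii)\Rightarrow(iii)\Rightarrow(i)$ and attacking $(iii)\Rightarrow(i)$ directly rather than through $(ii)$. The mechanism you are missing is: from $(iii)$, first-countability (which rules out closed chains of regular type $>\w_1$ in the order topology) and countable compactness (which rules out closed discrete chains of type $\w$) combine with the Banakh--Bardyla characterization of chain-compactness to conclude that every maximal chain in $X$ is compact. Then, since $X\times X$ is pseudocompact, the multiplication extends to $\beta X$ by Banakh--Dimitrova--Gutik, and a chain-compact subsemilattice is closed in any ambient topological semigroup; hence $X$ is closed in $\beta X$ and therefore compact. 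No long chain is ever constructed. Incidentally, your ``routine'' $(i)\Rightarrow(iii)$ is also not right as stated: compact spaces certainly contain non-compact subspaces, so one needs the extra observation (used in the paper for $(i)\Rightarrow(ii)$) that in a countably tight semilattice the closure of a chain of type $\w_1$ cannot be compact, since its supremum would have to lie in the closure of a countable subset.
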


The second result solves the problem of Banakh and Pastukhova~\cite[Problem 2.2]{BP} and complements~\cite[Proposition 2.1]{BP},~\cite[Theorem 7]{GR} and~\cite[Corollary 1]{BG}.


\begin{theorem}\label{inverse}
Let $S$ be a topological semigroup that contains a dense inverse subsemigroup. Then $S$ is a topological inverse semigroup provided one of the following conditions holds:
\begin{enumerate}[label=(\roman*)]
   \item $S$ is compact;
  \item  $S$ is countably compact and sequential.
\end{enumerate}   
\end{theorem}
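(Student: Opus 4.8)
The plan is to show, in each case, that $S$ is algebraically an inverse semigroup and, separately, that inversion on $S$ is continuous; the second assertion will be a soft consequence of the first. Let $T$ denote the given dense inverse subsemigroup, and note that $E(S)=\{s\in S:s^{2}=s\}$ is closed in $S$ (it is the preimage of the diagonal of $S\times S$ under the continuous map $s\mapsto(s^{2},s)$) and that $E(T)=T\cap E(S)$. The basic mechanism is this: since $T$ is dense, every $a\in S$ is the limit of some net $(s_{i})$ in $T$; in the compact case we may pass to a subnet along which $s_{i}^{-1}\to b$ for some $b\in S$, and applying continuity of multiplication to the identities $s_{i}s_{i}^{-1}s_{i}=s_{i}$ and $s_{i}^{-1}s_{i}s_{i}^{-1}=s_{i}^{-1}$ yields $aba=a$ and $bab=b$, so $b$ is an inverse of $a$ in $S$. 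In particular every element of $S$ is regular.

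The heart of the proof is the assertion that $E(T)$ is dense in $E(S)$. To prove it, fix $e\in E(S)$, take a net $s_{i}\to e$ in $T$, and pass (compactness) to a subnet with $s_{i}^{-1}\to b$. Applying the mechanism to the nets $(s_{i})$ and $(s_{i}^{2})$ (note $s_{i}^{2}\to e$ and $(s_{i}^{2})^{-1}=(s_{i}^{-1})^{2}\to b^{2}$) shows that both $b$ and $b^{2}$ are inverses of $e$ in $S$; in particular $beb=b$ and $eb^{2}e=e$. Moreover, since the idempotents $s_{i}s_{i}^{-1}$ and $s_{i}^{-1}s_{i}$ of the inverse semigroup $T$ commute, we have $s_{i}s_{i}^{-2}s_{i}=s_{i}^{-1}s_{i}^{2}s_{i}^{-1}$ in $T$, which passes to the limit as $eb^{2}e=beb$. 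Hence $b=beb=eb^{2}e=e$, so $s_{i}^{-1}\to e$; consequently the idempotents $s_{i}^{-1}s_{i}\in E(T)$ converge to $e$, and $e\in\overline{E(T)}$. Being the closure of the commutative set $E(T)$, the set $E(S)$ is commutative; since $S$ is moreover regular, $S$ is an inverse semigroup, and uniqueness of inverses forces its inversion to extend that of $T$.

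Continuity of inversion now follows: if $a_{i}\to a$ in $S$, then any cluster point $b$ of the net $(a_{i}^{-1})$ satisfies $aba=a$ and $bab=b$ (the mechanism along a suitable subnet), hence $b=a^{-1}$ by uniqueness; so $(a_{i}^{-1})$ has $a^{-1}$ as its only cluster point, and in a compact space a net with a unique cluster point converges to it, so $a_{i}^{-1}\to a^{-1}$.

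This disposes of case~(i). In case~(ii), where $S$ is only countably compact, the main obstacle is to legitimise the net arguments above. First I would show that a countably compact sequential space is sequentially compact: a sequence with pairwise distinct terms and no convergent subsequence would, together with each of its subsets, form an infinite closed discrete subspace, which a countably compact space cannot contain. Since $S$ need not be Fr\'echet--Urysohn, a point of $S=\overline{T}$ need not be a sequential limit of points of $T$; I would therefore argue along the transfinite hierarchy of iterated sequential closures $T=S_{0}\subseteq S_{1}\subseteq\cdots$ (with $S_{\alpha+1}$ the set of limits of convergent sequences from $S_{\alpha}$, and unions at limit stages), showing by induction that each $S_{\alpha}$ is an inverse subsemigroup of $S$ with compatible inversion: the successor step repeats the mechanism and the density argument above with $S_{\alpha}$ in place of $T$, extracting \emph{convergent subsequences} of inverses via the sequential compactness of $S$ and checking that the resulting limits lie in $S_{\alpha+1}$. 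Because $S$ is sequential, $S=S_{\gamma}$ for some ordinal $\gamma$, so $S$ is an inverse semigroup; and inversion is continuous, being sequentially continuous (a sequence in a countably compact space with a unique cluster point converges to it) with $S$ sequential. The delicate part is the successor step of this induction — verifying that enough compactness survives to extract the convergent subsequences and that the limits remain inside $S_{\alpha+1}$.
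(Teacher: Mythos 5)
Your proposal is correct, and its overall architecture matches the paper's: show $S$ is regular, show every idempotent of $S$ is approximated by idempotents of the dense inverse subsemigroup (so $E(S)$ is commutative and $S$ is inverse by the standard regular-plus-commuting-idempotents criterion), and in case (ii) push this through the transfinite iterated sequential closure of $T$, using that a countably compact sequential space is sequentially compact; your successor-step outline (products and limits stay in the next level, convergent subsequences of inverses extracted via sequential compactness of $S$, with limits landing in the next sequential closure) is exactly what the paper accomplishes with its lemma that each $\operatorname{cls}^{\alpha}$ of a subsemigroup is a subsemigroup. Where you genuinely diverge is in the key lemma and in the continuity of inversion. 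The paper proves an ultrafilter criterion (its Proposition on filters $\mathcal F$ with $X\in\mathcal F$ and convergent $\mathcal F^{-1}$): regularity comes as in your mechanism, but the density of $E(X)$ in $E(S)$ is obtained by a two-stage argument with the auxiliary filters $\mathcal F^{2}$ and $\mathcal F^{-2}$, first placing the limit $f$ of $\mathcal F^{-1}$ in $\overline{E(X)}$ and then the idempotent $e$ itself. You instead exploit the identity $s\,s^{-2}s=(ss^{-1})(s^{-1}s)=(s^{-1}s)(ss^{-1})=s^{-1}s^{2}s^{-1}$, valid in the inverse semigroup $T$, to conclude that any cluster point $b$ of the inverses of a net converging to an idempotent $e$ satisfies $b=beb=eb^{2}e=e$; hence the inverses themselves converge to $e$ and $e=\lim s_{i}^{-1}s_{i}\in\overline{E(T)}$. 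This is a cleaner, ultrafilter-free route giving a slightly stronger conclusion. Finally, you prove the continuity of inversion directly (unique-cluster-point arguments in a compact, respectively countably compact, space, plus sequential continuity together with sequentiality in case (ii)), whereas the paper cites Hajji's result for compact inverse topological semigroups and Banakh--Gutik for the countably compact sequential case; your direct arguments are sound and make the proof self-contained.
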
 

Recall that a space $X$ is called {\em sequential} if for every non-closed subset $A\subseteq X$ there exists a sequence in $A$ that converges to some point of $X\setminus A$.
Since each first-countable space is sequential, Theorem~\ref{inverse} implies the following.

\begin{corollary}\label{corNyikos}
Let $S$ be a Nyikos topological semigroup that contains a dense inverse subsemigroup. Then $S$ is a topological inverse semigroup.    
\end{corollary}

A space $X$ is called {\em pseudocompact} if $X$ is Tychonoff and each continuous real-valued function on $X$ is bounded.
A semigroup $S$ endowed with a topology is called {\em topologically periodic} if for each $x\in S$ and neighborhood $U$ of $x$ there exists $n\geq 2$ such that $x^n\in U$. 
Theorem~\ref{inverse} is essential for the proof of the following result that complements~\cite[Theorem 2]{BG} and~\cite[Theorem 1]{GR}.

\begin{theorem}\label{autinv}
Let $S$ be an inverse topological semigroup. Then inversion is continuous in $S$ provided one of the following conditions hold:
\begin{enumerate}[label=(\roman*)]
   \item $S$ is Tychonoff and $S{\times}S$ is pseudocompact;
    \item $S$ is regular, topologically periodic and $S{\times}S$ is countably compact.
\end{enumerate}   
\end{theorem}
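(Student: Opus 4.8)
\textbf{Proof proposal for Theorem~\ref{autinv}.}
The plan is to reduce both cases to Theorem~\ref{inverse} by passing to a suitable compactification or closure where $S$ sits densely. The key observation is that the image of $S$ under inversion is again a semigroup operation candidate, and density plus the compact-like hypothesis will force the limit structure to be an inverse semigroup into which $S$ embeds as a dense inverse subsemigroup. Concretely, in case (i), since $S$ is Tychonoff and $S\times S$ is pseudocompact, I would first invoke the Glicksberg-type phenomenon: the Stone-\v{C}ech compactification $\beta S$ (or, more carefully, the semigroup compactification obtained from the fact that pseudocompactness of $S\times S$ makes $\beta(S\times S)=\beta S\times\beta S$) carries a continuous extension of the multiplication, so $\beta S$ is a compact topological semigroup containing $S$ as a dense subsemigroup. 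Now $S$ itself is an inverse subsemigroup of the compact topological semigroup $\beta S$, hence by Theorem~\ref{inverse}(i) applied to $\beta S$, the semigroup $\beta S$ is a topological inverse semigroup; in particular inversion is continuous on $\beta S$, and its restriction to the (inverse-closed) subset $S$ is continuous. This yields continuity of inversion on $S$.

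For case (ii), the compactification route is unavailable, so instead I would work inside the closure of $S$ in a larger ambient countably compact sequential semigroup, or argue directly. The natural move is: since $S\times S$ is countably compact, the topological periodicity of $S$ lets one recover $x^{-1}$ as an accumulation point of the sequence $(x^{n})_{n}$ of powers — indeed in an inverse semigroup with $x$ of finite order one has $x^{-1}$ in the cyclic subsemigroup generated by $x$, and topological periodicity pushes the relevant subnet of powers toward the idempotent/period structure. I would make this precise by showing that for each $x\in S$, every neighborhood of $x^{-1}$ is hit by $x^{n}$ for infinitely many $n$, using that the closure of $\{x^{n}:n\in\N\}$ is a countably compact (hence compact, being also — one checks — metrizable or at least a compact topological semigroup via Theorem~\ref{inverse}(i) again) commutative semigroup, which must therefore be a compact topological group with identity $x x^{-1}$, inside which inversion is automatically continuous. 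Then a sequential/diagonal argument upgrades pointwise recovery of $x^{-1}$ to continuity of the global inversion map: if $x_{k}\to x$ but $x_{k}^{-1}\not\to x^{-1}$, one extracts a convergent subnet of $(x_{k}^{-1})$ — possible by countable compactness — with limit $y\neq x^{-1}$, and continuity of multiplication forces $xyx=x$ and $yxy=y$, contradicting uniqueness of the inverse.

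The main obstacle I anticipate is case (ii), specifically justifying that a convergent \emph{subsequence} (not merely a subnet) of $(x_{k}^{-1})$ exists and lands where we need it — this is exactly where sequentiality of $S$ (or of $S\times S$) must be combined with countable compactness, and the subtlety is that countably compact sequential spaces are not in general sequentially compact, so one cannot naively extract convergent subsequences. The fix is to run the argument on the closed subsemigroup generated by the countable set $\{x_{k}\}\cup\{x\}\cup\{x_{k}^{-1}\}$ together with the diagonal sequence; this closure is a countably compact sequential inverse subsemigroup, and one shows it is in fact a \emph{topological} inverse semigroup by a further application of Theorem~\ref{inverse}(ii) — note the hypotheses of that theorem (countably compact, sequential, dense inverse subsemigroup) are designed precisely to be inherited here. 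Once multiplication restricted to this countable-type piece is shown to have continuous inversion, the contradiction with uniqueness of inverses closes the argument. A secondary technical point in case (i) is verifying that the multiplication on $S$ extends \emph{continuously} to $\beta S$; this requires the pseudocompactness of $S\times S$ (so that $\beta(S\times S)=\beta S\times\beta S$) rather than just pseudocompactness of $S$, and I would cite the standard Comfort–Ross / Glicksberg machinery for this extension.
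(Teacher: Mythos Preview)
Your treatment of case~(i) is correct and matches the paper's argument exactly: extend the multiplication to $\beta S$ using pseudocompactness of $S\times S$ (this is the paper's Theorem~\ref{BDG}, i.e.\ the Glicksberg-type extension you cite), then apply Theorem~\ref{inverse}(i) to conclude that $\beta S$ is a topological inverse semigroup, and restrict.

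Case~(ii), however, has a genuine gap. The hypothesis is only that $S$ is regular, topologically periodic, and $S\times S$ is countably compact; there is \emph{no} sequentiality assumption, so Theorem~\ref{inverse}(ii) is not available, and your attempt to manufacture sequentiality by passing to the closure of a countable subset does not work --- closures of countable sets in countably compact spaces need not be sequential. (Incidentally, your remark that countably compact sequential spaces need not be sequentially compact is false for Hausdorff spaces; see Engelking~3.10.31.) Your subnet argument also stalls: countable compactness gives accumulation points of sequences, not convergent subnets of arbitrary nets, and even granting a cluster point $y$ of $(x_k^{-1})$, you have not established the relations $xyx=x$, $yxy=y$ without joint continuity along the relevant subnet in both coordinates.

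The paper takes a completely different route for~(ii): it first proves a purely structural lemma (Proposition~\ref{periodic}) showing that any inverse topologically periodic semitopological semigroup is \emph{Clifford}, i.e.\ $xx^{-1}=x^{-1}x$ for all $x$. The argument uses topological periodicity to show $x^2x^{-2}x=x=xx^{-2}x^2$ and then manipulates idempotents. Once $S$ is known to be Clifford, the conclusion is immediate from the cited Banakh--Gutik result (Theorem~\ref{BG1}), which already handles regular topologically periodic Clifford topological semigroups with countably compact square. So the real content you are missing is the reduction to the Clifford case.
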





This paper is organized as follows. Section~\ref{inver} is devoted to compact-like semigroups with dense inverse subsemigroups and the automatic continuity of inversion. In particular, Theorems~\ref{inverse} and~\ref{autinv} are proven there. In Section~\ref{secchain} we investigate chains in Nyikos topological semilattices and prove Theorem~\ref{semilattice}. In Section~\ref{NS} we show that each locally compact Nyikos topological semilattice is compact. This result is a milestone in the proof of Main Theorem~\ref{main} that is given in Section~\ref{proof of the main result}. Also, there we show that Main Theorem~\ref{main} cannot be generalized over simple bands.

\section{Compact-like semigroups with dense inverse subsemigroups and the automatic continuity of inversion}\label{inver}

Let $A$ be a subset of a space $X$. By $\cl_X(A)$ or simply $\overline{A}$ we denote the closure of $A$ in $X$.
Let $$s(A)=\{y\in X\colon \hbox{ }\exists\{x_n: n\in\w\}\subseteq A \hbox{ such that }\lim_{n\in\w}x_n=y\}.$$
The {\em sequential closure} $\cls(A)$ of $A$ is defined recursively as follows. Let $\cls^0(A)=A$ and assume that for each ordinal $\xi<\alpha\leq\omega_1$ the set $\cls^{\xi}(A)$ is defined. Then $\cls^\alpha(A)=\bigcup_{\xi\in\alpha}\cls^{\xi}(A)$ if $\alpha$ is limit and $\cls^{\alpha}(A)=s(\cls^{\gamma}(A))$ if $\alpha=\gamma+1$. Finally, put $\cls(A)=\cls^{\w_1}(A)$. 
Recall that a space $X$ is sequential if and only if $\overline A=\cls(A)$ for every $A\subseteq X$.

\begin{lemma}\label{cls}
Let $A$ be a subsemigroup of a topological semigroup $S$. Then $\cls^{\alpha}(A)$ is a subsemigroup of $S$ for each $\alpha\leq\w_1$.
\end{lemma}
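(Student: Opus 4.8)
The plan is to prove the statement by transfinite induction on $\alpha \le \omega_1$, showing at each stage that $\cls^\alpha(A)$ is closed under the semigroup multiplication. The base case $\alpha = 0$ is exactly the hypothesis that $A$ is a subsemigroup, and the limit case is immediate: if $\alpha$ is a limit ordinal and $x, y \in \cls^\alpha(A) = \bigcup_{\xi < \alpha} \cls^\xi(A)$, then $x \in \cls^{\xi_1}(A)$ and $y \in \cls^{\xi_2}(A)$ for some $\xi_1, \xi_2 < \alpha$; taking $\xi = \max\{\xi_1, \xi_2\} < \alpha$ and using the (obvious) monotonicity $\cls^{\xi_1}(A) \cup \cls^{\xi_2}(A) \subseteq \cls^\xi(A)$ together with the inductive hypothesis that $\cls^\xi(A)$ is a subsemigroup, we get $xy \in \cls^\xi(A) \subseteq \cls^\alpha(A)$.

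The heart of the argument is the successor step. Suppose $\alpha = \gamma + 1$ and, by the inductive hypothesis, $B := \cls^\gamma(A)$ is a subsemigroup; we must show $s(B)$ is a subsemigroup, i.e. that $\cls^\alpha(A) = s(B)$ is closed under multiplication. So take $x, y \in s(B)$. By definition there are sequences $\{x_n : n \in \omega\} \subseteq B$ and $\{y_n : n \in \omega\} \subseteq B$ with $x_n \to x$ and $y_n \to y$. Since $B$ is a subsemigroup, $x_n y_n \in B$ for every $n$, and since multiplication $S \times S \to S$ is continuous (this is where we use that $S$ is a topological semigroup), $x_n y_n \to xy$. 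Hence $xy$ is the limit of a sequence lying in $B$, which means exactly that $xy \in s(B) = \cls^\alpha(A)$.

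I do not expect any serious obstacle here; the only point requiring a moment's care is making the monotonicity of the sequential-closure hierarchy explicit — that $\xi < \eta$ implies $\cls^\xi(A) \subseteq \cls^\eta(A)$ — which follows by an easy induction from the fact that $B \subseteq s(B)$ for any $B$ (constant sequences). This monotonicity is used both in the limit step above and implicitly to know that $\cls^\gamma(A) \subseteq \cls^{\gamma+1}(A)$. One should also note that in a Hausdorff space sequential limits are unique, so the notation $\lim_{n\in\omega} x_n = y$ is unambiguous; this is already assumed throughout the paper. Altogether the proof is a routine transfinite induction whose only substantive input is the continuity of the semigroup operation applied to the diagonal sequence $(x_n, y_n)$.
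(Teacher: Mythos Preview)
Your proof is correct and follows essentially the same approach as the paper's: transfinite induction with the base case trivial, the limit case handled via the chain being an ascending union of subsemigroups, and the successor case handled by applying continuity of multiplication to the diagonal sequence $(x_n,y_n)$. Your treatment is slightly more explicit about the monotonicity $\cls^{\xi_1}(A)\cup\cls^{\xi_2}(A)\subseteq\cls^{\max\{\xi_1,\xi_2\}}(A)$ in the limit step, whereas the paper simply invokes the phrase ``ascending union of semigroups,'' but there is no substantive difference.
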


\begin{proof}
Note that $\cls^0(A)=A$ is a subsemigroup of $S$. Assume that for each $\xi<\delta\leq \w_1$, $\cls^\xi(A)$ is a semigroup. If $\delta$ is limit, then $\cls^{\delta}(A)=\bigcup_{\xi<\delta}\cls^\xi(A)$ is a subsemigroup of $S$, being an ascending union of semigroups. Suppose that $\delta=\gamma+1$ for some $\gamma<\w_1$. Then $\cls^\delta(A)=s(\cls^\gamma(A))$. Pick any points $x,y\in \cls^\delta(A)$ and  sequences $\{x_n: n\in\w\}\subseteq \cls^\gamma(A)$ and $\{y_n: n\in\w\}\subseteq \cls^\gamma(A)$ such that $x=\lim_{n\in\w}x_n$ and $y=\lim_{n\in\w}y_n$.
Since $\cls^\gamma(A)$ is a semigroup, we get that the sequence $\{x_ny_n: n\in\w\}$ is contained in $\cls^\gamma(A)$.
Since $S$ is a topological semigroup, the sequence $\{x_ny_n: n\in\w\}$ converges to $xy$. Thus $xy\in \cls^\delta(A)$, witnessing that $\cls^\delta(A)$ is a subsemigroup of $S$. Hence $\cls^{\alpha}(A)$ is a subsemigroup of $S$ for each $\alpha\leq\w_1$, as required.     
\end{proof}

The following problem appears in~\cite[Problem 2.2]{BP}.

\begin{problem}[Banakh, Pastukhova]\label{BP}
Assume that $S$ is a compact topological semigroup that contains a dense inverse subsemigroup. Is $S$ an inverse semigroup?
\end{problem}

For a semigroup $S$ by $E(S)$ we denote the set of all idempotents of $S$. 
A semigroup $S$ is called {\em regular} if for each $x\in S$ there exists $y\in S$ such that $xyx=x$. Note that every inverse semigroup is regular. A regular semigroup $S$ is inverse if an only if $E(S)$ is a semilattice (see~\cite[Theorem 5.1.1]{Howie}). A filter $\F$ on a space $X$ {\em converges} to a point $x\in X$ if $\F$ contains all neighborhoods of $x$. For a subset $F$ of an inverse semigroup $S$ let $F^{-1}=\{x^{-1}:x\in F\}$. 
The following proposition is useful for detecting inverse semigroups among topological semigroups with a dense inverse subsemigroup.

\begin{proposition}\label{tchar}
Let $S$ be a topological semigroup with a dense inverse subsemigroup $X$ and for every $y\in S\setminus X$ there exists an ultrafilter $\F$ convergent to $y$ such that $X\in \F$ and the ultrafilter $\F^{-1}$ generated by the family $\{F^{-1}: X\supseteq F\in \F\}$ converges to some element of $S$. Then $S$ is an inverse semigroup.  
\end{proposition}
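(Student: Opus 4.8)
The plan is to verify the two conditions in the characterization recalled just above the statement: I will show that $S$ is \emph{regular} and that $E(S)$ is a \emph{semilattice}; by \cite[Theorem 5.1.1]{Howie} this is exactly what is needed for $S$ to be inverse. Fix $y\in S\setminus X$, let $\F$ be the ultrafilter provided by the hypothesis, write $\mathcal G=\{F\in\F\colon F\subseteq X\}$ for its trace on $X$ (an ultrafilter on $X$, since $X\in\F$), and let $z\in S$ be a limit of $\F^{-1}$. The workhorse will be the map $\delta\colon X\to X\times X$, $x\mapsto(x,x^{-1})$: pushing $\mathcal G$ forward along $\delta$ produces a filter $\mathcal H$ on $X\times X\subseteq S\times S$ whose first projection generates $\F$ and whose second projection generates $\F^{-1}$; since a filter on a product converges precisely when both of its projections do, $\mathcal H$ converges to $(y,z)$ in $S\times S$.

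For regularity, I would apply the continuous maps $(a,b)\mapsto aba$ and $(a,b)\mapsto bab$ to $\mathcal H$. Because $xx^{-1}x=x$ and $x^{-1}xx^{-1}=x^{-1}$ hold in the inverse semigroup $X$, the images of $\mathcal H$ under these two maps are nothing but $\mathcal G$ and the image of $\mathcal G$ under inversion, so they converge to $y$ and to $z$; by continuity they also converge to $yzy$ and to $zyz$. As $S$ is Hausdorff, $yzy=y$ and $zyz=z$. Thus every element of $S\setminus X$ admits a generalized inverse in $S$, and every element of $X$ trivially does as well, so $S$ is regular.

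To handle $E(S)$, I would first record that $\overline{E(X)}$ is a commutative band: the sets $\{a\in S\colon a^2=a\}$ and $\{(a,b)\in S\times S\colon ab=ba\}$ are closed, being equalizers of continuous maps into the Hausdorff space $S$; they contain $E(X)$ and $E(X)\times E(X)$; and since $\overline{E(X)}\times\overline{E(X)}=\overline{E(X)\times E(X)}$, $E(X)\cdot E(X)\subseteq E(X)$, and multiplication is continuous, $\overline{E(X)}$ is a subsemigroup all of whose elements are idempotent and pairwise commuting. It then suffices to prove $E(S)\subseteq\overline{E(X)}$, which is clear for idempotents lying in $X$. So fix $e\in E(S)\setminus X$ and apply the previous paragraph with $y:=e$, obtaining $z\in S$ with $eze=e$ and $zez=z$. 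Applying the continuous maps $(a,b)\mapsto ab$ and $(a,b)\mapsto ba$ to the filter $\mathcal H$ (built from $e$) and using $xx^{-1},\,x^{-1}x\in E(X)$, we get $ez\in\overline{E(X)}$ and $ze\in\overline{E(X)}$. Since $\overline{E(X)}$ is a subsemigroup, $z=zez=(ze)(ez)\in\overline{E(X)}$, hence $z^2=z$, and then $e=eze=ez^2e=(ez)(ze)\in\overline{E(X)}$. As $\overline{E(X)}$ is commutative, so is $E(S)$; being a set of pairwise commuting idempotents, $E(S)$ is a semilattice, and the proof is complete.

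The delicate point, I expect, is the inclusion $E(S)\subseteq\overline{E(X)}$: an idempotent of $S$ is handed to us only as a limit of (a priori non-idempotent) elements of $X$, and the trick is to exploit the convergence of $\F^{-1}$ — the part of the hypothesis beyond mere density of $X$ — to manufacture the honest idempotents $xx^{-1}$ and $x^{-1}x$ of $X$, and then to bootstrap first $z$ and afterwards $e$ into the band $\overline{E(X)}$. The remaining ingredients (the behaviour of image filters, the closedness of the relevant subsets, the product-convergence criterion) are routine.
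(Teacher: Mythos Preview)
Your proof is correct and follows the same high-level strategy as the paper: first establish regularity of $S$, then show $E(S)\subseteq\overline{E(X)}$ so that idempotents commute. The execution of the second step, however, is genuinely tidier than the paper's. The paper proves $f\in\overline{E(X)}$ (in your notation, $z$) by first arguing that the filter $\F^2$ generated by $\{FF:F\in\F\}$ converges to $e$ (using $e^2=e$), and then for a neighborhood $U$ of $f=fef$ it produces $x\in F$ with $x^{-1}xxx^{-1}\in U\cap E(X)$; afterwards it repeats the trick with $\F^{-2}$ (using the freshly obtained $f^2=f$) to place $e$ in $\overline{E(X)}$ via elements $xx^{-1}x^{-1}x$. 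You bypass both auxiliary ``squared'' filters: pushing $\mathcal G$ along $\delta$ and then along $(a,b)\mapsto ab$ and $(a,b)\mapsto ba$ gives $ez,\,ze\in\overline{E(X)}$ directly, after which the purely algebraic bootstrap $z=zez=(ze)(ez)\in\overline{E(X)}$, hence $z^2=z$, hence $e=ez^2e=(ez)(ze)\in\overline{E(X)}$, finishes the job. What your packaging buys is that the only filter one ever manipulates is $\mathcal H$, and all the work is delegated to continuity of four simple maps $S\times S\to S$; what the paper's approach buys is that it never invokes the product-convergence criterion and stays entirely within $S$, at the cost of the extra $\F^{\pm 2}$ steps.
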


\begin{proof}
First let us show that the semigroup $S$ is regular. Fix any $y\in S\setminus X$. By assumption, there exists an ultrafilter $\F$ on $S$ convergent to $y$ such that $X\in\F$ and the ultrafilter $\F^{-1}$ converges to some element $z\in S$. Consider a neighborhood $U$ of the point $w=yzy$. By the continuity of the semigroup operation in $S$, there exist neighborhoods $V(y)$ and $V(z)$ of $y$ and $z$, respectively, such that $V(y)V(z)V(y)\subseteq U$. Then there exist $F_1, F_2\in \F$ such that $F_1\subseteq V(y)\cap X$ and $F_2^{-1}\subseteq V(z)\cap X$. Then for $F=F_1\cap F_2$ we get $F\subseteq FF^{-1}F\subseteq U$. Since the neighborhood $U$ of $w$ is arbitrarily chosen, the filter $\F$ converges to $w$. Since the space $S$ is Hausdorff, any filter on $S$ converges to at most one point, implying that $y=w=yzy$. Hence $S$ is a regular semigroup.  

To prove that $S$ is an inverse semigroup it suffices to show that idempotents of $S$ commute. For this it is enough to show that $E(S)=\overline{E(X)}$, as $E(X)$ is a commutative semigroup. Fix an idempotent $e\in S\setminus X$. By assumption, there exists an ultrafilter $\F$ convergent to $e$ such that $X\in \F$ and the ultrafilter $\F^{-1}$ converges to some point $f\in S$.
Similarly as above one can check that $fef=f$ and $e=efe$.
Since $ee=e$, the continuity of the semigroup operation implies that for every neighborhood $U$ of $e$ there exists a neighborhood $V$ of $e$ such that $VV\subseteq U$. Pick any $F\in\F$  such that $F\subseteq V$ and observe that $FF\subseteq U$. 
Consequently, the filter $\F^2$ generated by the family $\{FF: F\in\F\}$ converges to $e$. 
Fix a neighborhood $U$ of $f$. Since $f=fef$ and $S$ is a topological semigroup, there exist neighborhoods $V(f)$ and $V(e)$ of $f$ and $e$, respectively, such that $V(f)V(e)V(f)\subseteq U$. Since the filter $\F^{-1}$ converges to $f$ and the filter $\F^2$ converges to $e$, there exists $F\in \F$ such that $F\subseteq X$, $F^{-1}\subseteq V(f)$ and $FF\subseteq V(e)$. Fix any $x\in F$. Observe that  $$x^{-1}xxx^{-1}\in F^{-1}FFF^{-1}\subseteq V(f)V(e)V(f)\subseteq U.$$ 
Since $X$ is an inverse semigroup, the elements $x^{-1}x$ and $xx^{-1}$ are idempotents. Since $E(X)$ is a subsemigroup of $X$, the element $x^{-1}xxx^{-1}$ is also an idempotent. As the neighborhood $U$ of $f$ is arbitrarily chosen, we get $f\in \overline{E(X)}$. Since $S$ is a Hausdorff topological semigroup, $f$ is an idempotent. Let $\F^{-2}$ be the filter generated by the family $\{FF: F\in \F^{-1}\}$. Similarly as above one can check that $\F^{-2}$ converges to $f$. Since $S$ is a topological semigroup and $e=efe$, for any neighborhood $W$ of $e$ there exist neighborhoods $V(f)$ and $V(e)$ of $f$ and $e$, respectively, such that $V(e)V(f)V(e)\subseteq W$. As the filter $\F^{-2}$ converges to $f$ and the filter $\F$ converges to $e$, there exists $F\in \F$ such that $F\subseteq X\cap V(e)$ and $(FF)^{-1}=F^{-1}F^{-1}\subseteq V(f)$. Then for any $x\in F$ we have $$E(X)\ni xx^{-1}x^{-1}x\in FF^{-1}F^{-1}F\subseteq V(e)V(f)V(e)\subseteq W.$$
Thus $e\in \overline{E(X)}$ witnessing that $E(S)=\overline{E(X)}$, as required. Hence $S$ is an inverse semigroup.
\end{proof}

A space $X$ is called {\em sequentially compact} if each sequence in $X$ contains a convergent subsequence.
We are in a position to prove Theorem~\ref{inverse} and, in particular, to solve Problem~\ref{BP}. We need to show that a topological semigroup $S$ containing a dense inverse subsemigroup is a topological inverse semigroup provided (i) $S$ is compact, or (ii) $S$ is countably compact and sequential.

\begin{proof}[{\bf Proof of Theorem~\ref{inverse}}]
(i) Let $S$ be a compact topological semigroup containing a dense inverse subsemigroup $X$. 
Fix any $y\in S\setminus X$ and consider an ultrafilter $\F$ on $S$ that contains the family $\{U\cap X: U$ is a neighborhood of $y\}$. It is clear that $\F$ converges to $y$. By the compactness of $S$, the ultrafilter $\F^{-1}$ generated by the family $\{F^{-1}:X\supseteq F\in\F\}$ converges to some point of $S$. Proposition~\ref{tchar} implies that $S$ is an inverse topological semigroup. By~\cite[Proposition 1.6.7]{Hajii}, inversion is automatically continuous in compact inverse topological semigroups. Hence $S$ is a topological inverse semigroup.

(ii) Since the space $S$ is sequential we get that $$S=\overline{X}=\cls(X)=\cls^{\w_1}(X)=\bigcup_{\xi<\w_1}\cls^{\xi}(X).$$ First let us show that $S$ is an inverse semigroup. Note that $\cls^0(X)=X$ is an inverse subsemigroup of $S$.
Assume that $\cls^{\xi}(X)$ is an inverse subsemigroup of $S$ for every $\xi<\alpha\leq \w_1$. If the ordinal $\alpha$ is limit, then 
$\cls^{\alpha}(X)=\bigcup_{\xi<\alpha}\cls^\xi(X)$ is an inverse semigroup, being an ascending union of inverse semigroups. 
Suppose that $\alpha=\gamma+1$ for some $\gamma<\omega_1$. Lemma~\ref{cls} implies that $\cls^{\alpha}(X)=s(\cls^{\gamma}(X))$ is a subsemigroup of $S$ that contains densely the inverse semigroup $\cls^{\gamma}(X)$.
Fix any $x\in \cls^{\alpha}(X)\setminus \cls^{\gamma}(X)$ and a sequence $\{x_n:n\in\w\}\subseteq \cls^{\gamma}(X)$ convergent to $x$. 
By~\cite[Theorem 3.10.31]{Eng}, each Hausdorff countably compact sequential space is sequentially compact. Thus the sequence $\{x_n^{-1}: n\in\w\}\subseteq \cls^{\gamma}(X)$ contains a convergent in $S$ subsequence $\{x^{-1}_{n_k}: k\in\w\}$. Set $y=\lim_{k\in\w}x^{-1}_{n_k}$ and note that $y\in s(\cls^{\gamma}(X))=\cls^{\alpha}(X)$. Fix any free ultrafilter $\F$ on $\cls^{\alpha}(X)$ which contains the set $\{x_{n_k}:k\in\w\}$.  It is easy to see that $\F$ converges to $x$, and the ultrafilter $\F^{-1}$ generated by the family $\{F^{-1}\colon \cls^{\gamma}(X)\supseteq F\in\F\}$ converges to $y$. Proposition~\ref{tchar} implies that $\cls^{\alpha}(X)$ is an inverse semigroup. Hence $S=\cls^{\w_1}(X)$ is an inverse semigroup. 
By~\cite[Corollary 1]{BG}, inversion is automatically continuous on every countably compact sequential inverse topological semigroup.
Hence, $S$ is a topological inverse semigroup.
\end{proof}

The {\em bicyclic monoid} is generated by two elements $p,q$ subject to one condition $qp=1$. It is well known that the bicyclic monoid is an inverse semigroup (see~\cite[Chapter 3.4]{Lawson}). 
The following two results that appear in~\cite[Theorem 6.1 and Theorem 6.6]{BDG} establish some sharpness of Theorem~\ref{inverse}.

\begin{theorem}[Banakh, Dimitrova, Gutik]
There exists a pseudocompact topological semigroup which is not an inverse semigroup but contains a dense copy of the bicyclic monoid.
\end{theorem}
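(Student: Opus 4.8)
The plan is to exhibit the required semigroup explicitly, as is done in \cite{BDG}, and the rough shape of the construction is forced by two facts. First, it is a classical theorem that the bicyclic monoid does not embed into any compact topological semigroup; hence a pseudocompact semigroup $S$ containing a dense copy $C$ of the bicyclic monoid must be non-compact, and, since $C$ is discrete in every Hausdorff topological semigroup while the product of any two cofinite subsets of $C$ is all of $C$, the remainder $R=S\setminus C$ cannot be ``small'' (a single point, or a compact semilattice) -- it has to absorb a great deal of structure. Second, because $C$ itself is non-commutative this absorbed structure will be non-commutative too, and one can localise the non-commutativity to the idempotents, which is exactly what forces $S$ not to be an inverse semigroup. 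Accordingly, I would look for $S=C\sqcup R$ with $C$ a copy of the bicyclic monoid which is discrete, dense and open in $S$, and with $R$ a closed ideal of size $\mathfrak c$.

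Concretely, realise $C$ as $\w\times\w$ with its usual multiplication and topologise $S$ in the manner of a Mr\'owka space: fix an infinite almost disjoint family $\mathcal A$ of infinite subsets of $C$, put $R=\mathcal A$, and let the basic neighbourhoods of a point $A\in\mathcal A$ be the sets $\{A\}\cup(A\setminus F)$ with $F\subseteq C$ finite. The key point is that joint continuity of the extended multiplication leaves no freedom in how to define it off $C$: for $c\in C$ and $A,A'\in\mathcal A$, any neighbourhood of the value $c\cdot A\in R$ must contain a cofinite part of the set $cA=\{ca:a\in A\}$, which forces $c\cdot A$ to be the member of $\mathcal A$ almost containing $cA$; similarly $A\cdot c$ and $A\cdot A'$ are forced to be the members of $\mathcal A$ almost containing $Ac$ and $\{aa':a\in A,\ a'\in A'\}$ respectively. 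For this prescription to define a well-behaved associative operation, $\mathcal A$ must be \emph{closed modulo finite sets under the multiplication of $C$}: each of the sets $cA$, $Ac$, $AA'$ has to be almost contained in a (necessarily unique) member of $\mathcal A$. Granting such an $\mathcal A$, associativity on $S$ reduces to associativity in $C$ via the uniqueness of the ``almost containing'' member; Hausdorffness and complete regularity are standard for Mr\'owka-type spaces; \emph{maximality} of $\mathcal A$ gives that every infinite subset of $C$ accumulates at a point of $R$, so $S$ is feebly compact and hence pseudocompact; and one arranges the construction so that $R$ carries a non-commutative band, so that $S$ has non-commuting idempotents -- which is possible precisely because $C$ is non-commutative. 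Consequently $S$ is not an inverse semigroup.

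The main obstacle, carrying essentially all of the weight, is the existence in ZFC of an almost disjoint family on the bicyclic monoid which is simultaneously \emph{maximal} and \emph{closed modulo finite sets under the multiplication of $C$} (with the extra feature producing non-commuting idempotents). The two demands pull against one another: maximality forces $\mathcal A$ to be large and to ``catch'' every infinite subset of $C$, whereas the closure condition is a rigid algebraic constraint, and one has to be careful -- for instance about how the members of $\mathcal A$ meet the rows and columns of $\w\times\w$ -- in order for the induced operations $X\mapsto cX$, $X\mapsto Xc$, $(X,Y)\mapsto XY$ on $\mathcal A$ to be well behaved. In \cite{BDG} one produces $\mathcal A$ by a transfinite recursion of length $\mathfrak c$, alternately diagonalising against the infinite subsets of $C$ (towards maximality) and closing the family built so far under these countably many operations (towards the semigroup structure) while preserving almost disjointness. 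Once such an $\mathcal A$ is in hand, verifying that $S=C\sqcup\mathcal A$ is a pseudocompact non-inverse topological semigroup containing a dense copy of the bicyclic monoid is the routine bookkeeping indicated above.
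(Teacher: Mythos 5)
The first thing to say is that the paper contains no proof of this statement: it is imported verbatim from \cite[Theorem 6.1]{BDG} solely to illustrate the sharpness of Theorem~\ref{inverse}, so there is no in-paper argument to match your reconstruction against, and your proposal has to stand on its own. It does not. The gap sits exactly where you yourself place ``essentially all of the weight'': the existence of a maximal almost disjoint family on the bicyclic monoid that is almost-closed under the operations $X\mapsto cX$, $X\mapsto Xc$, $(X,Y)\mapsto XY$ is only asserted, and the claim that \cite{BDG} produces it by a length-$\mathfrak c$ recursion is not supported by the citation --- as the statement of their Theorem 6.6 (quoted right next to Theorem 6.1 in this paper) already suggests, the examples there are obtained by attaching a remainder assembled from a topological group to the bicyclic monoid, not from a Mr\'owka-type family. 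No argument is given that the closing-off steps can be performed while preserving almost disjointness and still reaching maximality, and this is precisely the non-routine content.

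Even granting such a family, your continuity analysis is insufficient. At a pair $(A,A')$ of remainder points, a basic neighbourhood of $M:=A\cdot A'$ contains exactly one point of the remainder, whereas every neighbourhood of $A$ contains a tail $A\setminus F$ of isolated points; joint continuity therefore forces $c\cdot A'=M$ for all but finitely many $c\in A$ (and dually $A\cdot c'=M$ for almost all $c'\in A'$), i.e.\ the translation maps must be eventually constant along each member of the family --- a much stronger coherence requirement than ``$cA'$ is almost contained in some member'', and one in visible tension with maximality. One also needs, for every finite $G\subseteq M$, finite sets $F,F'$ with $(A\setminus F)(A'\setminus F')\subseteq M\setminus G$; since every element of the bicyclic monoid has infinitely many factorizations ($(a,n)(n,b)=(a,b)$ for all $n$), deleting finitely many points of $A$ and $A'$ need not push the product set off a finite set, so this too must be engineered and is not. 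Finally, the non-inverse conclusion is a wish rather than an argument: nothing in your list of requirements produces non-commuting idempotents (or a failure of regularity) in the remainder. Your preliminary observations (the bicyclic monoid is discrete in any Hausdorff topological semigroup, does not embed into a compact one, and products of cofinite sets are everything) are correct, but the construction itself is not carried out and does not reflect the cited one.
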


\begin{theorem}[Banakh, Dimitrova, Gutik]
If there exists a torsion-free abelian countably compact topological group without non-trivial convergent sequences, then there exists a Tychonoff countably compact topological semigroup which is not an inverse semigroup but contains a dense copy of the bicyclic monoid.
\end{theorem}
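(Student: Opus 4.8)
The plan is to realize the required semigroup $S$ as a \emph{boundary completion} of the bicyclic monoid $\C(p,q)$, using the group $G$ only to supply the accumulation points that countable compactness demands, while carefully arranging that those accumulation points do \emph{not} force the inverse law. First I would normalize $G$. Fixing an element $a\in G$ of infinite order (available since $G$ is infinite and torsion-free) and replacing $G$ by the closed subgroup $\overline{\langle a\rangle}$, we may assume that $G$ is monothetic with dense cyclic subgroup $\langle a\rangle\cong\Z$, separable, countably compact, and still without nontrivial convergent sequences, all of these being inherited by the subspace. The combinatorial backbone is the canonical homomorphism $\varphi\colon\C(p,q)\to\Z$, $\varphi(p^iq^j)=i-j$; composed with $n\mapsto na$ it yields a homomorphism $\psi\colon\C(p,q)\to G$ with dense image $\langle a\rangle$, and $\psi$ is what ``aims'' the net of bicyclic elements at prescribed points of $G$.

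Next I would set $S=\C(p,q)\sqcup K$, where the closed ideal $K$ is assembled from copies of $G$: a \emph{right edge} receiving the row limits $\xi_m=\lim_j p^mq^j$, a \emph{left edge} receiving the column limits $\eta_n=\lim_i p^iq^n$, and a \emph{corner} copy of $G$ receiving the limits of the height-cofinal nets, the latter governed by $\psi$. On $\C(p,q)$ the operation is the usual bicyclic one; on $K$ it is built from the group operation of $G$ together with the shift relations $p\,\xi_m=\xi_{m+1}$, $\xi_m\,q=\xi_m$ (and symmetrically on the left edge), while mixed products of a bicyclic element with a point of $K$ are dictated by $\psi$. I would topologize $S$ so that $\C(p,q)$ is dense and discrete, a neighbourhood of a corner point $g$ consisting of $g$ together with the elements $p^iq^j$ for which $\psi(p^iq^j)$ is close to $g$ and $\min(i,j)$ is large, with the analogous ``one coordinate large'' traces at the edges; Tychonoffness can be read off from this explicit neighbourhood base, and associativity, Hausdorffness, and joint continuity reduce to routine (if tedious) case analyses.

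The crux is countable compactness, and this is exactly where the two hypotheses on $G$ are spent. Any infinite $A\subseteq\C(p,q)$ either has unbounded heights $\min(i,j)$, whence a height-cofinal subnet, or else meets a single row or column in an infinite set; applying countable compactness of $G$ to the image of $A$ under $\psi$ (respectively to the corresponding sequence in $\langle a\rangle$) produces a cluster point there, which by the choice of topology lifts to a cluster point of $A$ in the corner, the right edge, or the left edge. Hence every infinite subset of $S$ clusters, so $S$ is countably compact. Moreover the absence of nontrivial convergent sequences in $G$ propagates to $S$: no sequence from $\C(p,q)$ converges to a generic point of $K$, so $S$ is \emph{not} sequential. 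This is the decisive structural point, since Theorem~\ref{inverse}(ii) shows that a countably compact \emph{sequential} topological semigroup with a dense inverse subsemigroup is forced to be inverse; non-sequentiality is precisely the loophole the example must exploit.

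It remains to see that $S$ is not an inverse semigroup, and securing this while keeping multiplication continuous is the main obstacle of the construction. The mechanism is that the boundary idempotents fail to commute: each $\xi_m$ and each $\eta_n$ is an idempotent (for large $j$ one has $p^mq^j\,p^mq^{j'}=p^mq^{\,j-m+j'}$, whose limit is again $\xi_m$), whereas $\xi_m\eta_n=\lim_j\lim_i p^mq^j p^iq^n$ lands in different edges according to the relative order of the two passages to the limit, so that $\xi_m\eta_n\neq\eta_n\xi_m$ for suitable $m,n$. Thus $E(S)$ is not a semilattice and $S$ is not inverse, even though it contains the dense inverse subsemigroup $\C(p,q)$. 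The delicate step is to choose the filters defining $K$, and the gluing through $G$, so that these order-dependent products are nonetheless \emph{well defined and jointly continuous}: in a compact semigroup this is impossible, as the bicyclic monoid admits no dense embedding into a compact semigroup, and the entire purpose of the hypothesis that $G$ be countably compact with no convergent sequences is to furnish enough accumulation points to close up $\C(p,q)$ without the sequential rigidity that would otherwise force the two iterated limits to coincide. I expect this reconciliation of countable compactness, continuity of the operation, and noncommuting boundary idempotents to be the heart of the proof.
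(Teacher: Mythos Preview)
The paper does not prove this theorem; it is quoted with attribution to Banakh, Dimitrova and Gutik and cited from \cite{BDG} (their Theorem~6.6), serving only to illustrate the sharpness of Theorem~\ref{inverse}. There is therefore no in-paper argument to compare your proposal against.

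For what it is worth, your sketch is broadly in the spirit of the original construction: one adjoins to the discrete bicyclic monoid a closed ideal manufactured from $G$, so that every infinite subset of $\C(p,q)$ clusters in the ideal (this is where countable compactness of $G$ is spent) while the absence of nontrivial convergent sequences in $G$ prevents sequentiality and hence blocks the mechanism of Theorem~\ref{inverse}(ii). Your proposal is candid that the hard part---reconciling joint continuity of the multiplication with noncommuting boundary idempotents---is left unresolved, so what you have is a plan rather than a proof. If you want to complete it, the point to watch is that your ``edge'' limits $\xi_m$ and $\eta_n$ cannot be defined merely as sequential limits (no such limits exist in $G$); the neighbourhood filters have to be specified via $\psi$ and the group topology, and associativity and continuity then require a careful bookkeeping of which ultrafilter limits land where. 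The original paper \cite{BDG} carries this out in detail, and checking your outline against it would be the right next step.
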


Noteworthy, the existence of a torsion-free abelian countably compact topological group without non-trivial convergent sequences is still not established in ZFC. However, various consistent examples of such groups were constructed in~\cite{BRT, BCST, DT,D,GTW,HvM, KTW, T1,T2}.


Recall that an inverse semigroup $S$ is called {\em Clifford} if $xx^{-1}=x^{-1}x$ for all $x\in S$. The automatic continuity of inversion in groups, Clifford semigroups, and inverse semigroups is widely studied (see~\cite{BG, BR, Brand, Ellis, GR, Pfi, Rez, Roma, Tka}). In most cases the continuity of inversion follows from the continuity of multiplication and some compact-like property. For instance, the following two results appear in~\cite[Corollary 2]{GR} and~\cite[Theorem 2]{BG}, respectively.


\begin{theorem}[Gutik, Pagon, Repov\v{s}]\label{GR}
Let $S$ be a Tychonoff Clifford topological semigroup such that $S{\times}S$ is pseudocompact. Then inversion is continuous in $S$. 
\end{theorem}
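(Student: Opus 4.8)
To prove the final statement, that inversion is continuous in a Tychonoff Clifford topological semigroup $S$ with $S\times S$ pseudocompact, the plan is to transport the semigroup structure to the Stone--\v{C}ech compactification $\beta S$ and then invoke Theorem~\ref{inverse}(i). If $S$ is finite it is discrete and there is nothing to prove, so assume $S$ is infinite.

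First I would use Glicksberg's theorem: since $S$ is Tychonoff and $S\times S$ is pseudocompact, $\beta(S\times S)=\beta S\times\beta S$. The multiplication $m\colon S\times S\to S\subseteq\beta S$ is continuous into the compact Hausdorff space $\beta S$, so it extends uniquely to a continuous map $\bar m\colon\beta(S\times S)=\beta S\times\beta S\to\beta S$; write $pq:=\bar m(p,q)$. The maps $(p,q,r)\mapsto(pq)r$ and $(p,q,r)\mapsto p(qr)$ are continuous on $(\beta S)^3$ and agree on the dense subset $S^3$, where both equal the product computed in $S$; hence they coincide, so $\bar m$ is associative and $\beta S$ is a compact Hausdorff topological semigroup. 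Since $\bar m$ restricted to $S\times S$ is $m$, the set $S$ is a subsemigroup of $\beta S$, and it is dense and inverse.

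Next I would apply Theorem~\ref{inverse}(i): $\beta S$ is a compact topological semigroup containing the dense inverse subsemigroup $S$, hence $\beta S$ is a topological inverse semigroup, and in particular its inversion $\iota\colon\beta S\to\beta S$ is continuous. It remains to note that $\iota$ restricts on $S$ to the original inversion of $S$. This is just uniqueness of inverses: for $x\in S$ the identities $xx^{-1}x=x$ and $x^{-1}xx^{-1}=x^{-1}$ hold in $S$ and therefore in $\beta S$, so $x^{-1}$ is the inverse of $x$ in the inverse semigroup $\beta S$, i.e.\ $\iota(x)=x^{-1}$. Thus the inversion of $S$ equals the restriction of the continuous map $\iota$ to the subspace $S$, and is continuous.

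I expect the heavy lifting to sit entirely inside Theorem~\ref{inverse}(i), which is available to us; the only genuinely new point is that pseudocompactness of $S\times S$, via Glicksberg's theorem, allows one to move the topological semigroup structure onto the compact space $\beta S$, after which continuity of inversion there is automatic and descends to $S$ for free. The points needing a little care are the identification $\beta(S\times S)=\beta S\times\beta S$ (and the degenerate finite case), the density argument for associativity of $\bar m$, and the observation that $S$ embeds in $\beta S$ as an \emph{inverse} subsemigroup, which is immediate since being inverse is an internal algebraic property of $S$. Note that this argument uses neither the Clifford hypothesis nor anything beyond ``inverse'', so it in fact proves the stronger Theorem~\ref{autinv}(i) --- inversion is continuous in every Tychonoff inverse topological semigroup $S$ with $S\times S$ pseudocompact --- and the present statement follows because Clifford semigroups are inverse.
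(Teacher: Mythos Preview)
Your proposal is correct and essentially coincides with the paper's own argument. In the paper, Theorem~\ref{GR} is merely quoted from the literature, but the paper proves the stronger Theorem~\ref{autinv}(i) exactly along your lines: it invokes Theorem~\ref{BDG} (which packages the Glicksberg step you spell out) to make $\beta S$ a compact topological semigroup containing $S$ as a dense inverse subsemigroup, applies Theorem~\ref{inverse}(i) to conclude that $\beta S$ is a topological inverse semigroup, and then restricts the inversion to $S$. Your closing observation that the Clifford hypothesis is never used is precisely the content of Theorem~\ref{autinv}(i).
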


\begin{theorem}[Banakh, Gutik]\label{BG1}
Let $S$ be a regular topologically periodic Clifford topological semigroup such that $S{\times}S$ is countably compact. Then inversion is continuous in $S$.    
\end{theorem}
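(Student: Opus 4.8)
The plan is to handle the two cases by reducing, as far as possible, to the machinery already available: case~(i) to the compact instance of Theorem~\ref{inverse}, and case~(ii) to a direct sequence/net analysis in the spirit of Theorem~\ref{BG1}. A convenient common starting point is the following purely algebraic observation, which uses only the Hausdorffness of $S$ and the continuity of multiplication. Fix $a\in S$ and a net $(x_\lambda)$ in $S$ with $x_\lambda\to a$. If $b$ is any cluster point of the net $(x_\lambda^{-1})$, then, passing to a subnet, we may assume $x_\lambda^{-1}\to b$; applying the continuity of multiplication to the identities $x_\lambda x_\lambda^{-1}x_\lambda=x_\lambda$ and $x_\lambda^{-1}x_\lambda x_\lambda^{-1}=x_\lambda^{-1}$ yields $aba=a$ and $bab=b$, so $b$ is an inverse of $a$. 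By the uniqueness of inverses in an inverse semigroup, $b=a^{-1}$. Hence $(x_\lambda^{-1})$ has at most one cluster point, namely $a^{-1}$, and continuity of inversion at $a$ is reduced to showing that $(x_\lambda^{-1})$ actually converges.

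For case~(i) I would sidestep the convergence issue by compactifying. Since $S\times S$ is pseudocompact and $S$ is Tychonoff, Glicksberg's theorem gives $\beta(S\times S)=\beta S\times\beta S$, so the multiplication $m\colon S\times S\to S\subseteq\beta S$ extends to a continuous map $\tilde m\colon\beta S\times\beta S\to\beta S$. Associativity passes from the dense set $S\times S\times S$ to all of $\beta S$ by continuity and Hausdorffness, so $\beta S$ is a compact topological semigroup in which $S$ is a dense inverse subsemigroup. Theorem~\ref{inverse}(i) then shows that $\beta S$ is a topological inverse semigroup, so inversion is continuous on $\beta S$. Finally, since $\beta S$ is inverse and $s^{-1}\in S$ satisfies the defining equations of an inverse of $s\in S$, the inverse of $s$ computed in $\beta S$ coincides with $s^{-1}$; thus inversion on $S$ is the restriction of a continuous map and is itself continuous. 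This is the Tychonoff strengthening of the Clifford result Theorem~\ref{GR}, now obtained for all inverse semigroups through Theorem~\ref{inverse}.

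For case~(ii) the compactification route is unavailable, as regularity alone need not produce a semigroup compactification, and this is where the real work lies. Using the algebraic observation above together with the countable compactness of $S\times S$, I would first establish that inversion is sequentially continuous: given $x_n\to a$, the sequence $\big((x_n,x_n^{-1})\big)$ lies in the countably compact space $S\times S$ and so has a cluster point; by Hausdorffness its first coordinate must be $a$, and by the observation its second coordinate must be $a^{-1}$, so $(a,a^{-1})$ is its only cluster point. Since a sequence with a unique cluster point in a countably compact space converges to it, we obtain $(x_n,x_n^{-1})\to(a,a^{-1})$ and hence $x_n^{-1}\to a^{-1}$. Note that this step uses neither regularity nor topological periodicity.

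The main obstacle is the passage from sequential to genuine continuity, since $S$ is not assumed to be sequential and, under mere countable compactness, an arbitrary net in $S\times S$ need not possess a cluster point. This is exactly where regularity and topological periodicity must enter, and I expect the argument to parallel the Clifford case of Theorem~\ref{BG1}: topological periodicity places each $a$ in the closure of $\{a^n:n\ge 2\}$, so that the relevant idempotents and, ultimately, $a^{-1}$ can be located among cluster points of the sequences $(a^n)$, which are values of the continuous power maps $x\mapsto x^n$; regularity together with the countable compactness of $S\times S$ should then allow one to promote this sequential control of the power maps into control of inversion on neighborhoods of $a$. The crux, and the point at which the proof genuinely goes beyond Theorem~\ref{BG1}, is to carry this linkage out when $aa^{-1}\neq a^{-1}a$, that is, when $a$ does not lie in a subgroup of $S$, so that one cannot simply reduce to the Clifford situation.
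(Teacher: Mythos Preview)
Your proposal is not addressed to the stated theorem. Theorem~\ref{BG1} is a single assertion about \emph{Clifford} topological semigroups; it has no ``case~(i)'' and ``case~(ii)''. In the paper it is quoted from Banakh--Gutik and is not re-proved. What you have written is an attempt at Theorem~\ref{autinv}, which does have two parts and which \emph{uses} Theorem~\ref{BG1} as a black box in part~(ii).

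Reading your text as a proof of Theorem~\ref{autinv}: your case~(i) is correct and is exactly the paper's argument (compactify via Theorem~\ref{BDG}, apply Theorem~\ref{inverse}(i), restrict). Your case~(ii), however, has a genuine gap. You correctly identify that the whole difficulty is the non-Clifford situation $aa^{-1}\neq a^{-1}a$, and you leave precisely that case unresolved. The point you are missing is that this case cannot occur: in the paper, Proposition~\ref{periodic} shows that every inverse topologically periodic semitopological semigroup is automatically Clifford. Once that algebraic lemma is in hand, part~(ii) of Theorem~\ref{autinv} is immediate from Theorem~\ref{BG1}, and no ``going beyond'' it is needed. Your sequential-continuity step is fine but unnecessary for the paper's route; the substantive content of part~(ii) is the reduction to the Clifford case, not an upgrade of sequential continuity to continuity.
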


The rest of this section is devoted to the proof of Theorem~\ref{autinv}, which in fact generalizes Theorems~\ref{GR} and~\ref{BG1}. 
A semigroup $S$ endowed with a topology is called {\em semitopological}  if for every $s\in S$ the shifts $l_s: x\mapsto sx$ and $r_s: x\mapsto xs$ are continuous in $S$. It is clear that every topological semigroup is semitopological.


\begin{proposition}\label{periodic}
Each inverse topologically periodic semitopological semigroup $S$ is Clifford.  
\end{proposition}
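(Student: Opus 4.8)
Each inverse topologically periodic semitopological semigroup $S$ is Clifford.

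The plan is to fix an arbitrary $x\in S$, put $e=xx^{-1}$ and $f=x^{-1}x$, and prove $e=f$; since every pair of idempotents of $S$ commutes (because $E(S)$ is a semilattice), it suffices to establish $e\le f$ and $f\le e$ in the natural partial order, i.e. $ef=e$ and $fe=f$.

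First I would extract the only consequence of topological periodicity that is needed: at $x$ it says precisely that $x\in\overline{\{x^n:n\ge 2\}}$. The key algebraic observation is that the idempotent $e_2:=x^2x^{-2}$ acts as a left identity on all higher powers of $x$: one has $e_2x^2=x^2x^{-2}x^2=x^2$ and $e_2x^n=e_2x^2\cdot x^{n-2}=x^2x^{n-2}=x^n$ for $n\ge 3$. Hence $\{x^n:n\ge 2\}\subseteq e_2S$, where $e_2S=\{s\in S:e_2s=s\}$ (the two descriptions coincide because $e_2$ is idempotent). Now semitopologicity and Hausdorffness enter: $e_2S$ is the equalizer of the two continuous self-maps $s\mapsto e_2s$ and $s\mapsto s$ of $S$, hence closed; so $x\in\overline{\{x^n:n\ge 2\}}\subseteq e_2S$, that is $e_2x=x$. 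Running the mirror-image argument with $f_2:=x^{-2}x^2$ (which satisfies $x^nf_2=x^n$ for $n\ge 2$, and $Sf_2=\{s:sf_2=s\}$ is closed) yields $xf_2=x$.

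The last step is a routine computation in the inverse semigroup $S$. Multiplying $e_2x=x$ on the left by $x^{-1}$, on one side I get $x^{-1}e_2x=x^{-1}x=f$, while on the other $x^{-1}e_2x=(x^{-1}x^2)(x^{-2}x)=(fx)(x^{-1}f)=f\,(xx^{-1})\,f=fef=fe$ (using that idempotents commute); hence $f=fe$, i.e. $f\le e$. Dually, multiplying $xf_2=x$ on the right by $x^{-1}$ gives $e=xx^{-1}=xf_2x^{-1}=(xx^{-2})(x^2x^{-1})=(ex^{-1})(xe)=e\,(x^{-1}x)\,e=efe=ef$, so $e=ef$, i.e. $e\le f$. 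Therefore $e=f$, that is $xx^{-1}=x^{-1}x$, and since $x$ was arbitrary, $S$ is Clifford.

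I do not expect a real obstacle: no compactness, no countable compactness, and no case distinction (e.g. on whether the powers of $x$ are distinct) is needed — only topological periodicity in its literal form, separate continuity of the translations, and the Hausdorff axiom. The only thing to be careful about is the bookkeeping: verifying $e_2S=\{s:e_2s=s\}$ and its dual precisely, checking $e_2x^n=x^n$ and $x^nf_2=x^n$ for all $n\ge 2$, and keeping the idempotent identities straight in the final computation so that one obtains the two comparabilities $f\le e$ and $e\le f$ rather than the same one twice.
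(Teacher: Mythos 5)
Your proof is correct and follows essentially the same route as the paper: both arguments hinge on showing $x^2x^{-2}x=x=xx^{-2}x^2$ from topological periodicity via the separately continuous translations and Hausdorffness (you phrase this as closedness of the equalizers $e_2S$ and $Sf_2$, the paper via nonempty intersections $V\cap e_2V$, $Vf_2\cap V$ for all neighborhoods $V$ of $x$), and then conclude $xx^{-1}=x^{-1}x$ by a short computation using that idempotents commute. The differences are purely presentational.
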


\begin{proof}
Fix an element $x\in S$ and a neighborhood $U$ of $x$. Since $X$ is topologically periodic, there exists a positive integer $n\geq 2$ such that $x^n\in U$. Then $$x^n=x^2x^{-2}x^n\in U\cap (x^{2}x^{-2})U,$$
and 
$$x^n=x^nx^{-2}x^2\in U(x^{-2}x^{2})\cap U.$$
It follows that $V\cap (x^2x^{-2})V\neq \var$ and  $V(x^{-2}x^{2})\cap V\neq \var$ for each neighborhood $V$ of $x$. Since $X$ is a Hausdorff semitopological semigroup, we obtain $x^{2}x^{-2}x=x=xx^{-2}x^2$. It follows that $x^{-1}x^{2}x^{-2}=x^{-1}=x^{-2}x^2x^{-1}$.
Then
\begin{align*}
x^{-1}x=x^{-1}x^2x^{-2}x^2x^{-2}x&=x^{-1}x^2x^{-2}x=\\
&=(x^{-1}x)(xx^{-1})(x^{-1}x)=(xx^{-1})(x^{-1}x).
\end{align*}
In the formula above, the second equality follows from the fact that $x^{2}x^{-2}$ is an idempotent, whereas the last equality holds as idempotents commute in $S$. 
Similarly
\begin{align*}
xx^{-1}=xx^{-2}x^{2}x^{-2}x^2x^{-1}&=xx^{-2}x^{2}x^{-1}=\\
&=(xx^{-1})(x^{-1}x)(xx^{-1})=(xx^{-1})(x^{-1}x).
\end{align*}
Hence $xx^{-1}=x^{-1}x$ witnessing that the semigroup $S$ is Clifford.
\end{proof}

By $\beta X$ we denote the Stone-\v{C}ech compactification of a Tychonoff space $X$ (see~\cite[Chapter~3.6]{Eng}). 
The following result appears in~\cite[Theorem 2.3]{BDG}.

\begin{theorem}[Banakh, Dimitrova, Gutik]\label{BDG}
Let $S$ be a Tychonoff topological semigroup  such that $S{\times}S$ is pseudocompact. Then the semigroup operation of $S$ extends to a continuous semigroup operation on $\beta S$.    
\end{theorem}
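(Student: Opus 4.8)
The plan is to obtain the extension in one stroke from Glicksberg's theorem on Stone-\v{C}ech compactifications of products and then to verify associativity by a density argument. Recall Glicksberg's theorem: for infinite Tychonoff spaces $X$ and $Y$, the canonical continuous map $\beta(X\times Y)\to\beta X\times\beta Y$ is a homeomorphism if and only if $X\times Y$ is pseudocompact. If $S$ is finite there is nothing to prove, since then $\beta S=S$, so we may assume $S$ is infinite; applying Glicksberg's theorem with $X=Y=S$ and using the pseudocompactness of $S\times S$, we obtain a canonical identification $\beta(S\times S)=\beta S\times\beta S$.

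First I would produce the candidate operation. The multiplication $m\colon S\times S\to S$ is continuous, so composing it with the inclusion $S\hookrightarrow\beta S$ yields a continuous map from $S\times S$ into the compact Hausdorff space $\beta S$. By the universal property of $\beta(S\times S)$ this map extends uniquely to a continuous map $\beta(S\times S)\to\beta S$, which, under the Glicksberg identification, becomes a continuous map $\widetilde m\colon\beta S\times\beta S\to\beta S$ satisfying $\widetilde m(x,y)=m(x,y)$ for all $x,y\in S$. Thus $\widetilde m$ is a continuous binary operation on $\beta S$ extending the multiplication of $S$, and it only remains to check that $\widetilde m$ is associative.

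For associativity I would compare the two continuous maps $f,g\colon(\beta S)^{3}\to\beta S$ given by $f(x,y,z)=\widetilde m(\widetilde m(x,y),z)$ and $g(x,y,z)=\widetilde m(x,\widetilde m(y,z))$. They coincide on the dense subset $S^{3}$ of $(\beta S)^{3}$: for $x,y,z\in S$ we have $\widetilde m(x,y)=m(x,y)\in S$, whence $f(x,y,z)=m(m(x,y),z)=m(x,m(y,z))=g(x,y,z)$ by the associativity of $S$. Since $\beta S$ is Hausdorff, the coincidence set $\{(x,y,z)\in(\beta S)^{3}:f(x,y,z)=g(x,y,z)\}$ is closed; being dense, it equals all of $(\beta S)^{3}$. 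Hence $\widetilde m$ is an associative continuous multiplication on $\beta S$ extending that of $S$, i.e.\ $\beta S$ is a compact topological semigroup containing $S$ as a dense subsemigroup, as claimed.

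The only genuine input is Glicksberg's theorem; once the identification $\beta(S\times S)=\beta S\times\beta S$ is in hand, the remainder is the standard routine of extending a continuous map into a compact Hausdorff space and transporting identities across a dense set, the one point requiring a moment's care being that the density computation for associativity uses $\widetilde m(S\times S)\subseteq S$, which holds by construction.
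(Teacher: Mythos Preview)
Your proof is correct and follows the standard route via Glicksberg's theorem, the universal property of the Stone--\v{C}ech compactification, and a density argument for associativity. Note, however, that the paper does not itself prove this statement: it is quoted as \cite[Theorem~2.3]{BDG} and used as a black box, so there is no in-paper proof to compare against. Your argument is essentially the one given in the original Banakh--Dimitrova--Gutik paper, so nothing is lost.
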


We are in a position to prove Theorem~\ref{autinv}. We need to show that inversion is continuous in any inverse topological semigroup $S$ provided (i) $S{\times}S$ is pseudocompact, or (ii) $S$ is regular, topologically periodic and $S{\times}S$ is countably compact.

\begin{proof}[{\bf Proof of Theorem~\ref{autinv}}]
(i)    By Theorem~\ref{BDG}, $\beta S$ is a compact topological semigroup that contains $S$ as a dense inverse subsemigroup. Theorem~\ref{inverse} implies that $\beta S$ is a topological inverse semigroup. Then the continuity of inversion in $\beta S$ yields the continuity of inversion in $S$.  

(ii) The proof follows from Proposition~\ref{periodic} and Theorem~\ref{BG1}.    
\end{proof}

\section{Chains in topological semilattices}\label{secchain}
Recall that each semilattice possesses the natural partial order $\leq$ defined by $a\leq b$ if and only if $ab=ba=a$. Further, if a semilattice is treated as a poset, then by default it is assumed to carry the natural partial order.
For an element $x$ of a semilattice $X$ put ${\downarrow}x=\{y\in X\colon y\leq x\}$ and ${\uparrow}x=\{y\in X\colon x\leq y\}$. Note that if $X$ is a semitopological semilattice, then for every $x\in X$ the sets ${\downarrow}x$ and ${\uparrow}x$ are closed.
A semitopological semilattice $X$ is called {\em chain-compact} if all maximal chains in $X$ are compact. The following two theorems follow from~\cite[Theorem 3.1]{BB}. 

\begin{theorem}[Banakh, Bardyla]\label{BBchain}
For a semitopological semilattice $X$ the following is equivalent:
\begin{enumerate}[label=(\arabic*)] 
    \item $X$ is chain-compact;
    \item  each nonempty chain $L\subseteq X$ has $\inf L\in \overline{L}$ and $\sup L\in \overline{L}$;
    \item no closed chain in $X$ is topologically isomorphic to an infinite regular cardinal endowed with the order topology and the semilattice operation of minimum or maximum.
\end{enumerate}  
\end{theorem}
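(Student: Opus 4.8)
The plan is to deduce Theorem~\ref{BBchain} from \cite[Theorem 3.1]{BB}; absent its precise statement here, I first indicate how the three conditions can be shown equivalent directly, the point being that $(1)\Ra(2)\Ra(3)$ is short while $(3)\Ra(1)$ carries all the difficulty. I would use three elementary facts about a Hausdorff semitopological semilattice $X$, all flowing from the continuity of translations: (a) for each $x\in X$ the sets ${\downarrow}x$ and ${\uparrow}x$ are closed (noted in the text); (b) consequently the closure $\overline L$ of any chain $L$ is again a chain (if $x\in\overline L$ and $y\in L$ then $L\subseteq{\downarrow}y\cup{\uparrow}y$, a closed set, so $x$ is comparable to $y$, and a second application handles two points of $\overline L$), and every maximal chain is closed; (c) if $z$ is a lower (resp.\ upper) bound in $X$ of a chain $C$, then the continuous translation $l_z\colon w\mapsto zw$ is constant with value $z$ on $C$ (resp.\ the identity on $C$), so $l_z(\overline C)\subseteq\overline{l_z(C)}$ gives $zw=z$ for every $w\in\overline C$ (resp.\ $zw\in\overline C$ for every $w\in\overline C$).

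For $(1)\Ra(2)$: given a nonempty chain $L$, embed the chain $\overline L$ into a maximal chain $M$ by Zorn's lemma; by (1) and (b), $M$ is compact and closed, so $\overline L$ is compact. A nonempty compact chain $K$ has a least and a greatest element (the families $\{{\downarrow}a\cap K: a\in K\}$ and $\{{\uparrow}a\cap K: a\in K\}$ are centered and consist of closed sets) and is in fact a complete lattice: for $A\subseteq K$ the up-sets $\{{\uparrow}a\cap K: a\in A\}$ again form a centered family of closed sets, so the set of upper bounds of $A$ in $K$ is a nonempty closed subchain, which therefore has a least element, namely $\sup A$; infima are dual. Taking $A=L$ inside $K=\overline L$ yields $\sup L$ and $\inf L$ computed in $\overline L$, and fact (c) upgrades these to the actual supremum and infimum of $L$ in $X$ (a lower bound $z$ of $L$ in $X$ satisfies $z\cdot\inf L=z$, i.e.\ $z\le\inf L$, and dually). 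Hence $\inf L,\sup L\in\overline L$.

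For $(2)\Ra(3)$: if a closed chain $C\subseteq X$ were topologically isomorphic, as a semilattice, to an infinite regular cardinal $\kappa$ with the order topology and the operation $\min$ (resp.\ $\max$), then the natural order of $C$ would be the ordinal order of $\kappa$ (resp.\ its reverse); since every infinite cardinal is a limit ordinal, $C$ would have no greatest (resp.\ least) element and no element of $C$ would be an upper (resp.\ lower) bound of $C$, so $\sup C$ (resp.\ $\inf C$) could not lie in $C=\overline C$, contradicting (2). For $(3)\Ra(1)$ I argue the contrapositive. Let $M$ be a non-compact maximal chain. By (a), ${\downarrow}x\cap M$ and ${\uparrow}x\cap M$ are closed in $M$, so the rays $\{y\in M: y>x\}$ and $\{y\in M: y<x\}$ are open and the subspace topology on $M$ refines the order topology. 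Using (c) one checks that a monotone net in a chain which converges in $X$ converges to its supremum (if increasing) or infimum (if decreasing). With this, I would run a transfinite recursion building a strictly monotone sequence in $M$: at successor steps append a strictly larger (resp.\ smaller) element; at a limit step append the limit of the tail already built when that limit exists in $X$ (it then lies in $M$ by (b) and equals the supremum/infimum of that tail), and stop otherwise. The sequence being injective, the recursion stops, necessarily at a limit stage and — because $M$ is not compact — with a genuinely non-convergent monotone tail; passing to a cofinal subsequence one arranges the length to be a regular cardinal $\kappa$ while keeping every proper initial segment convergent to its last point and the whole sequence divergent. Its range $C$ is then a closed subsemilattice whose natural order has type $\kappa$ (or its reverse), whose induced operation is $\min$ (resp.\ $\max$) since the meet of comparable elements is the one of smaller (resp.\ larger) index, and on which the subspace topology coincides with the order topology (along a well-ordered chain all up-sets and down-sets are clopen, and the limit points were installed as limits of their tails). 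Thus $C$ refutes (3).

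The main obstacle is the bookkeeping in this last recursion: one must simultaneously keep the length regular, ensure the final sequence is genuinely divergent rather than divergent merely as a full net while some cofinal subnet converges, and ensure that the absent supremum/infimum is truly missing from $\overline C$, so that $C$ is a non-compact copy of $\kappa$ rather than a convergent copy of $\kappa+1$. This is exactly what \cite[Theorem 3.1]{BB} packages, so in the paper I would derive Theorem~\ref{BBchain} from that result, the only additional remarks being the dictionary between the two formulations — in particular that on a closed well-ordered chain in a semitopological semilattice the subspace and order topologies agree, and that the semilattice operation of such a chain is forced to be $\min$ or $\max$.
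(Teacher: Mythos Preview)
The paper itself gives no proof of this theorem; it simply records that the statement follows from \cite[Theorem~3.1]{BB}. Your plan to deduce it from that reference is therefore exactly the paper's approach, and your supplementary sketch of the implications $(1)\Ra(2)\Ra(3)$ is correct, with the hard direction $(3)\Ra(1)$ rightly deferred to the cited result.
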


\begin{theorem}[Banakh, Bardyla]\label{H-closed}
If $X$ is a chain-compact subsemilattice of a topological semigroup $Y$, then $X$ is closed in $Y$.  
\end{theorem}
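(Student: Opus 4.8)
The goal is to show that a chain-compact subsemilattice $X$ of a topological semigroup $Y$ is closed in $Y$. The plan is to argue by contradiction: suppose $X$ is not closed, pick a point $y\in\overline{X}\setminus X$, and use it to build a chain in $X$ (not necessarily closed!) whose infimum or supremum escapes the chain's closure, contradicting the characterization in Theorem~\ref{BBchain}(2).

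First I would record the basic closure facts available in a topological semigroup: since multiplication is continuous, $\overline{X}$ is a subsemigroup of $Y$, hence a subsemilattice, and the natural partial order on $\overline{X}$ restricts to the one on $X$. For a point $y\in\overline{X}\setminus X$, consider the set $L_y={\downarrow}y\cap X=\{x\in X\colon xy=x\}$ together with, dually, $R_y={\uparrow}y\cap X=\{x\in X\colon xy=y\cdot\! \text{(no, rather } xy=y)\}$; more usefully, consider for a fixed $x_0\in X$ the elements $x_0 y$ and note $x_0 y\le x_0$ in $\overline X$. The key observation is that multiplying $y$ by elements of $X$ lands inside $X$ precisely when... — here one must be careful, since $x_0 y$ need not lie in $X$. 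So instead the right move is: take any net $(x_\alpha)$ in $X$ converging to $y$; for each fixed $\beta$ the net $(x_\beta x_\alpha)_\alpha$ converges to $x_\beta y$, and the elements $x_\beta x_\alpha$ lie in $X$. I would then look at a maximal chain $C$ in $X$ through a suitably chosen point and show that $y$ is an accumulation point of $C$ but $\inf C$ or $\sup C$ computed in $\overline X$ equals $y\notin C$, while chain-compactness of $X$ forces $\inf C,\sup C\in\overline{C}^{\,X}\subseteq X$ — the contradiction.

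More concretely, the cleanest route: let $y\in\overline X\setminus X$ and set $e=$ some idempotent limit; actually every element of the semilattice $\overline X$ is idempotent, so $y$ itself is idempotent. Consider $D={\downarrow}y\cap X$. By continuity of multiplication, for a net $x_\alpha\to y$ in $X$ we get $y x_\alpha \to y y=y$, and $yx_\alpha\le y$; but $yx_\alpha$ lies in $\overline X$, not obviously in $X$. To stay inside $X$, replace $yx_\alpha$ by $x_\beta x_\alpha$ for $x_\beta$ close to $y$: then $\{x_\beta x_\alpha:\alpha,\beta\}\subseteq X$ accumulates at $y$. Now extend the directed-downward family generated by these products to a maximal chain $C\subseteq X$; by maximality and Theorem~\ref{BBchain}(2) applied in $X$, $\sup C\in\overline C^{\,X}$, hence $\sup C\in X$. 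On the other hand, computing suprema/infima in the larger semilattice $\overline X$ and using that $y$ is a limit point one shows $y\le$ (or $\ge$) every relevant element yet $y$ is squeezed to be exactly $\sup C$ or $\inf C$, so $y\in X$ — contradiction.

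The main obstacle I anticipate is the one flagged above: products like $x_\beta y$ or $y x_\alpha$ live in $\overline X$ but need not lie in $X$, so one cannot naively produce a chain \emph{inside} $X$ limiting onto $y$. The technical heart of the argument is therefore a careful selection — using the net converging to $y$ and the commutativity/idempotence of the semilattice operation — of an honest chain $C\subseteq X$ such that $y$ is forced to be $\inf C$ or $\sup C$ in $\overline X$; then Theorem~\ref{BBchain} (characterization of chain-compactness via $\inf L,\sup L\in\overline L$) delivers the contradiction. I would also need the elementary fact that in a Hausdorff semitopological semilattice the order relation $\le$ is closed, so that limits respect the order and ${\downarrow}y$, ${\uparrow}y$ are closed in $\overline X$, which keeps the suprema/infima computations consistent between $X$ and $\overline X$.
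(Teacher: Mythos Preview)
The paper does not prove this theorem; it is quoted from \cite[Theorem~3.1]{BB}, so there is no in-paper argument to compare against. Your overall strategy---assume $y\in\overline X\setminus X$, use that $\overline X$ is a Hausdorff topological semilattice, and contradict Theorem~\ref{BBchain}(2)---is the natural one, and the preliminary facts you record (idempotence of $y$, closedness of ${\uparrow}y$ and ${\downarrow}y$, closure of a chain is a chain) are correct and relevant.

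The gap is exactly where you locate it, and your proposed constructions do not close it. The products $x_\beta x_\alpha$ of net elements lie in $X$ and accumulate at $y$, but they do not form a chain. If instead you pass to a decreasing sequence of finite meets $z_n=x_{\alpha_1}\cdots x_{\alpha_n}$, this \emph{is} a chain in $X$, but you only get $\inf_n z_n\le y$ from the closedness of $\le$; there is no reason for $\inf_n z_n\ge y$, so the infimum can drop strictly below $y$ and you learn nothing. ``Extending the directed family to a maximal chain $C\subseteq X$'' is even looser: a maximal chain through some $z_n$ need not have $y$ in its closure at all, so step~6 (``$y$ is squeezed to be exactly $\sup C$ or $\inf C$'') has no justification. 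Finally, it is not true in general that $y\in\overline{{\downarrow}y\cap X}$ or $y\in\overline{{\uparrow}y\cap X}$, so those sets cannot be used directly either.

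What is actually needed---and what the argument in \cite{BB} supplies---is an additional structural consequence of chain-compactness: each maximal chain $C$ of $X$, being compact, is an order-complete chain (every subset of $C$ has its $\inf$ and $\sup$ in $C$, by Theorem~\ref{BBchain}(2) and maximality), and one then shows that no point of $\overline X\setminus X$ can be inserted into such a $C$. The order-completeness step is the substantive idea your sketch is missing; without it the contradiction cannot be closed from the raw net $(x_\alpha)$ alone.
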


The following fact is well known.

\begin{proposition}[Folklore]\label{chainfolk}
The closure of a chain in a semitopological semilattice is a chain.    
\end{proposition}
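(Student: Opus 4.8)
The plan is to show that if $L$ is a chain in a semitopological semilattice $X$, then $\overline{L}$ is again a chain, i.e. any two points of $\overline{L}$ are comparable with respect to the natural partial order. First I would fix two arbitrary points $a, b \in \overline{L}$ and aim to prove that either $ab = a$ or $ab = b$ (equivalently $a \le b$ or $b \le a$). The natural tool is that for each fixed $s \in X$ the shifts $l_s \colon x \mapsto sx$ and $r_s \colon x \mapsto xs$ are continuous, so preimages of closed sets under these shifts are closed.

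The key observation is that for any element $x$ of a semilattice, the product $xx = x$, and more relevantly, for $x, y$ in a \emph{chain} $L$ we have $xy \in \{x, y\} \subseteq L$, so $L$ is a subsemilattice and in particular $LL \subseteq L$. I would first argue that $\overline{L}$ is a subsemilattice: for fixed $x \in L$, continuity of $r_x$ gives $\overline{L}x = \overline{Lx} \subseteq \overline{L}$; then for fixed $a \in \overline{L}$, continuity of $l_a$ gives $a\overline{L} = \overline{aL} \subseteq \overline{\overline{L}} = \overline{L}$ (using the previous step to know $aL \subseteq \overline{L}$). Hence $\overline{L}\,\overline{L} \subseteq \overline{L}$.

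Now, to see $\overline{L}$ is a chain, fix $a, b \in \overline{L}$ and consider the element $ab \in \overline{L}$; I want to show $ab = a$ or $ab = b$. Consider the set $C_a = \{x \in X \colon ax \in \{a, \text{something}\}\}$ — more precisely, I would use the following cleaner approach. Fix $a \in \overline{L}$. The set $D_a = \{x \in X : xa = ax\}$ need not obviously be closed, but note that in the chain $L$ every element $x$ satisfies $xa = ax$ when $a \in L$... this requires care since $a$ may not be in $L$. Instead, fix $x \in L$ first: the set $\{y \in X : xy = yx\}$ contains $L$ (as $L$ is a chain, hence commutative — indeed any semilattice is commutative by definition) so that is automatic. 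The real content is comparability. I would fix $x \in L$ and consider the two closed sets ${\downarrow}x$ and ${\uparrow}x$ in $X$; for every $y \in L$ we have $y \le x$ or $x \le y$, so $L \subseteq {\downarrow}x \cup {\uparrow}x$. Taking closures, $\overline{L} \subseteq \overline{{\downarrow}x \cup {\uparrow}x} = {\downarrow}x \cup {\uparrow}x$ since the union of two closed sets is closed. Thus every $a \in \overline{L}$ is comparable to every $x \in L$.

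The remaining — and I expect main — obstacle is to pass from ``every point of $\overline{L}$ is comparable to every point of $L$'' to ``any two points of $\overline{L}$ are comparable''. For this, fix $a \in \overline{L}$; by what we just proved, $L \subseteq {\downarrow}a \cup {\uparrow}a$, and moreover $a$ lies in the closure of $L_a^- := L \cap {\downarrow}a$ or of $L_a^+ := L \cap {\uparrow}a$ (since $L = L_a^- \cup L_a^+$ and $a \in \overline{L}$, $a$ is in the closure of at least one piece; both pieces are chains). Suppose $a \in \overline{L_a^-}$, so $a \in \overline{{\downarrow}a \cap L}$. Now take another $b \in \overline{L}$. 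Since ${\downarrow}a$ is closed and contains $L_a^-$, we get $a \in {\downarrow}a$ trivially, but more usefully: for each $y \in L_a^-$ we have $y \le a$, hence $y \in {\downarrow}a$; applying comparability of $b$ with elements of $L$, and using that the relevant sets ${\downarrow}b$, ${\uparrow}b$ are closed, one shows $a \in \overline{L_a^-} \subseteq {\downarrow}b \cup {\uparrow}b$, so $a$ and $b$ are comparable. The careful bookkeeping of which ``half'' of the chain accumulates to each of $a$ and $b$, together with the closedness of all four sets ${\downarrow}a, {\uparrow}a, {\downarrow}b, {\uparrow}b$, is where the argument must be executed precisely; once that is set up, comparability of $a$ and $b$ falls out, completing the proof that $\overline{L}$ is a chain.
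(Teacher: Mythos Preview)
The paper states this proposition as folklore and gives no proof, so there is nothing to compare your approach against. Your overall strategy is the standard one and is correct: use that ${\downarrow}x$ and ${\uparrow}x$ are closed for every $x$ in a (Hausdorff) semitopological semilattice, so that $L \subseteq {\downarrow}x \cup {\uparrow}x$ forces $\overline{L} \subseteq {\downarrow}x \cup {\uparrow}x$.

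Two remarks. First, the last paragraph is more complicated than necessary. Once you have established that every $a \in \overline{L}$ is comparable to every $x \in L$, simply repeat the same argument with $a$ in place of $x$: for fixed $a \in \overline{L}$ you now know $L \subseteq {\downarrow}a \cup {\uparrow}a$, and since this union is closed, $\overline{L} \subseteq {\downarrow}a \cup {\uparrow}a$, giving comparability of any $b \in \overline{L}$ with $a$. The detour through $L_a^{-}$ and $L_a^{+}$ is unnecessary (and the subsemilattice verification, while true, is also not needed for the chain conclusion). Second, a minor sloppiness: continuity of $r_x$ gives only $\overline{L}\,x \subseteq \overline{Lx}$, not equality, and likewise for $l_a$; this does not affect your argument since only the inclusion is used.
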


\begin{lemma}\label{conv}
Let $X$ be a semitopological semilattice and $L\subseteq X$ be a chain isomorphic to $(\w_1,\min)$. If $\overline{L}$ is compact, then there exists $z=\sup L\in \overline{L}$ such that for each neighborhood $U$ of $z$ the set $L\setminus U$ is countable.   
\end{lemma}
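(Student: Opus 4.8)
The plan is to exploit the structure of $L \cong (\w_1,\min)$ together with the compactness of $\overline L$. First I would fix an order isomorphism $\phi\colon \w_1 \to L$ and write $x_\alpha = \phi(\alpha)$, so that $\alpha \leq \beta$ in $\w_1$ implies $x_\alpha \geq x_\beta$ in $X$ (because the operation on $L$ is $\min$, the isomorphism reverses the natural orders; I will double-check orientation, but the argument is symmetric). By Proposition~\ref{chainfolk}, $\overline L$ is a chain, and since $\overline L$ is compact, Theorem~\ref{BBchain} (applied to the chain $L$, or to $\overline L$) gives that $z := \inf L$ (the infimum in the $\min$-orientation, i.e. the element that is $\leq$ every $x_\alpha$) exists and lies in $\overline L$; equivalently, treating $L$ with the opposite identification, $z = \sup L \in \overline L$ in the statement's phrasing. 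So the existence of $z=\sup L\in\overline L$ is immediate from the earlier results; the content of the lemma is the countability claim about $L\setminus U$.

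Next I would prove the countability statement by contradiction. Suppose $U$ is a neighborhood of $z$ with $L\setminus U$ uncountable. Then $A := \{\alpha < \w_1 : x_\alpha \notin U\}$ is uncountable, hence cofinal in $\w_1$. Consider the net $(x_\alpha)_{\alpha\in A}$ inside the compact set $\overline L$; it has a cluster point $w \in \overline L$. I claim $w = z$. Indeed, for each fixed $\beta < \w_1$, the set ${\uparrow}x_\beta = \{y : x_\beta \leq y\}$ is closed in $X$ (as noted before Theorem~\ref{BBchain}, using that $X$ is semitopological), and all but countably many $x_\alpha$ with $\alpha \in A$ satisfy $\alpha \geq \beta$, hence $x_\alpha \leq x_\beta$, i.e. $x_\alpha \in {\downarrow}x_\beta$, which is also closed. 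So the cluster point $w$ lies in ${\downarrow}x_\beta$ for every $\beta$, meaning $w \leq x_\beta$ for all $\beta$, so $w$ is a lower bound for $L$; since $z = \inf L \in \overline L$ and $\overline L$ is a chain, a lower bound for $L$ lying in $\overline L$ must equal $z$ (any $w \leq$ all $x_\beta$ with $w\in\overline L$ forces $w\le z$ by taking limits, and $w\ge z$ is impossible unless $w=z$ since... here I need the precise comparison, but in a chain with $z=\inf L$ the only element of $\overline L$ below all of $L$ is $z$ itself). Hence $w = z \in U$. But then $U$ is a neighborhood of the cluster point $w$ of the net $(x_\alpha)_{\alpha\in A}$, so $x_\alpha \in U$ for cofinally many $\alpha \in A$, contradicting that $x_\alpha \notin U$ for all $\alpha \in A$.

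The main obstacle I anticipate is the bookkeeping around the orientation of the order on $L$ and making the cluster-point argument fully rigorous with \emph{nets} rather than sequences: $\w_1$ is not first-countable at its top, so I cannot extract a convergent sequence, and I must work with the net indexed by the cofinal uncountable set $A$ (or equivalently, pass to the closed set $\overline{\{x_\alpha : \alpha \in A\}}$, use compactness to get a point $w$ in it, and show $w$ is forced to be $z$). The key technical point is that for each $\beta$, ${\downarrow}x_\beta$ is closed and contains a tail of $(x_\alpha)_{\alpha\in A}$, which pins any accumulation point below every element of $L$ and hence identifies it with $z$. Once that is in place, the contradiction with $z \in U$ is formal. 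A cleaner packaging: set $F = \overline{L \setminus U}$; this is a closed subset of the compact chain $\overline L$, hence compact; it is itself a chain; if $L\setminus U$ were uncountable then $\inf F$ would again be forced to equal $z$ by the same ${\downarrow}x_\beta$ argument, but $z \in U$ and $F \cap U$ could still be nonempty, so I would instead argue that $z \in \overline{L\setminus U} \subseteq X\setminus U$ is the actual contradiction once I show $z$ accumulates $L\setminus U$ — I will choose whichever of these two formulations reads most smoothly, but both rest on the same closedness-of-${\downarrow}x_\beta$ observation.
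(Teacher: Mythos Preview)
Your orientation is off: in a $\min$-semilattice the natural order is $a\le b\iff ab=a\iff\min(a,b)=a$, so the isomorphism $\phi\colon(\w_1,\min)\to L$ \emph{preserves} the ordinal order, and the distinguished point really is $z=\sup L$, not $\inf L$. Correspondingly, in the cluster-point argument the tails lie in the closed sets ${\uparrow}x_\beta$, the cluster point $w$ is an \emph{upper} bound of $L$, and the step you flagged with ``\dots'' becomes clean: $z=\sup L\le w$, while $L\subseteq{\downarrow}z$ and ${\downarrow}z$ closed give $w\in\overline L\subseteq{\downarrow}z$, hence $w=z$.

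With that fix your argument is correct and is essentially the paper's. In fact, your ``cleaner packaging'' at the end \emph{is} the paper's proof: any uncountable $L'\subseteq L$ is cofinal in $L\cong\w_1$, so $\sup L'=z$, and Theorem~\ref{BBchain}(2) applied inside the compact chain $\overline L$ gives $z\in\overline{L'}$; taking $L'=L\setminus U$ yields $z\in\overline{L\setminus U}\subseteq X\setminus U$, a contradiction. Your net/cluster-point version just unpacks this second application of Theorem~\ref{BBchain}(2) by hand.
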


\begin{proof}
By Proposition~\ref{chainfolk}, $\overline{L}$ is a compact chain. Theorem~\ref{BBchain}(2) applied to $\overline{L}$ implies that $\overline{L}$ contains the supremum of $L$ which we denote by $z$. It is clear that $z=\sup L'$ for each uncountable subset $L'$ of $L$. By Theorem~\ref{BBchain}(2), $z\in\overline{L'}$ for each uncountable subset $L'$ of $L$.  It follows that for each neighborhood $U$ of $z$ the set $L\setminus U$ is countable, as required.
\end{proof}

Dually, one can prove the following.

\begin{lemma}\label{conv1}
Let $X$ be a semitopological semilattice and $L\subseteq X$ be a chain isomorphic to $(\w_1,\max)$. If $\overline{L}$ is compact, then there exists $z=\inf L\in \overline{L}$ such that for each neighborhood $U$ of $z$ the set $L\setminus U$ is countable.   
\end{lemma}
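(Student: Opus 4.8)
The plan is to mimic the proof of Lemma~\ref{conv}, replacing suprema with infima and swapping the roles of minimum and maximum throughout. First I would invoke Proposition~\ref{chainfolk} to conclude that $\overline{L}$ is itself a chain, which together with the hypothesis that $\overline{L}$ is compact puts us in a position to apply Theorem~\ref{BBchain}(2) to the (closed, hence chain-compact) chain $\overline{L}$. Applying part (2) to $\overline{L}$ yields $\inf \overline{L}\in\overline{L}$; since $L$ is cofinal downward in $\overline{L}$ in the appropriate sense (more precisely $\inf L=\inf\overline{L}$ because $L$ is dense in $\overline{L}$ and the order relations are closed), we obtain a point $z=\inf L\in\overline{L}$.

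Next I would use the fact that $L$ is order-isomorphic to $(\w_1,\max)$, so its order type as a poset is exactly $\w_1$ with the reverse of the usual ordering made into a $\max$-semilattice; in particular, every uncountable subset $L'\subseteq L$ is coinitial in $L$, so $\inf L'=\inf L=z$. Applying Theorem~\ref{BBchain}(2) to each such $L'$ (note $\overline{L'}\subseteq\overline{L}$ is compact, being closed in a compact space, hence chain-compact) gives $z=\inf L'\in\overline{L'}$ for every uncountable $L'\subseteq L$.

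Finally I would derive the conclusion by contradiction: if there were a neighborhood $U$ of $z$ with $L\setminus U$ uncountable, then setting $L'=L\setminus U$ we would have $z\in\overline{L'}\subseteq\overline{L\setminus U}\subseteq X\setminus U$ (the last inclusion because $L\setminus U$ is contained in the closed set $X\setminus U$), contradicting $z\in U$. Hence $L\setminus U$ is countable for every neighborhood $U$ of $z$, as required.

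This is almost entirely routine once Lemma~\ref{conv} is in hand; the only point requiring a modicum of care is the dualization itself, namely checking that ``$L$ isomorphic to $(\w_1,\max)$'' really does make every uncountable subset coinitial (so that $\inf L'=z$), which is immediate from the order structure of $\w_1$. I do not anticipate any genuine obstacle, since Theorem~\ref{BBchain} is symmetric in $\inf$/$\sup$ and $\min$/$\max$, so no new topological input beyond what was used for Lemma~\ref{conv} is needed.
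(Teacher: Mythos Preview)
Your proposal is correct and matches the paper's approach exactly: the paper simply states ``Dually, one can prove the following'' after Lemma~\ref{conv}, and your argument is precisely that dualization---invoking Proposition~\ref{chainfolk} and Theorem~\ref{BBchain}(2) with infima in place of suprema, then using that every uncountable subset of a chain of type $(\omega_1,\max)$ is coinitial.
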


Recall that a space $X$ is called {\em countably tight} if for every $A\subseteq X$ and $x\in\overline A$ there exists a countable subset $B\subseteq A$ such that $x\in\overline B$. 

\begin{lemma}\label{base}
Let $X$ be a countably tight semitopological semilattice and $L$ be a chain isomorphic to $(\w_1,\min)$ or $(\w_1,\max)$. Then $\overline{L}$ is not compact.
\end{lemma}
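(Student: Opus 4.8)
\textbf{Proof plan for Lemma~\ref{base}.}

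The plan is to argue by contradiction: assume $\overline L$ is compact, where (without loss of generality, by passing to the dual argument) $L$ is a chain isomorphic to $(\w_1,\min)$. By Lemma~\ref{conv} the compactness of $\overline L$ yields a point $z=\sup L\in\overline L$ such that $L\setminus U$ is countable for every neighborhood $U$ of $z$; in particular $z\notin L$ (since $L\cong(\w_1,\min)$ has no top element) and $z\in\overline L$, so $z$ is a genuine limit point. The goal is to derive a contradiction with countable tightness. By countable tightness applied to $z\in\overline L$, there is a countable subset $B\subseteq L$ with $z\in\overline B$. Since $L\cong(\w_1,\min)$ is order-isomorphic to $\w_1$ and $B$ is countable, $B$ is bounded in $L$: there is $b\in L$ with $y\le b$ for all $y\in B$, i.e. $B\subseteq{\downarrow}b$.

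Now I use the standing observation (recorded just before Theorem~\ref{BBchain}) that in a semitopological semilattice the set ${\downarrow}b$ is closed. Hence $\overline B\subseteq{\downarrow}b$, so in particular $z\in{\downarrow}b$, i.e. $z\le b$. On the other hand $z=\sup L$ and $b\in L$, so $b\le z$. Combining, $z=b\in L$ — contradicting $z\notin L$. This contradiction shows $\overline L$ cannot be compact. The argument for $L\cong(\w_1,\max)$ is entirely dual, using Lemma~\ref{conv1}, the fact that ${\uparrow}b$ is closed, and $z=\inf L$.

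The only point requiring any care is the claim that a countable subset of a chain isomorphic to $(\w_1,\min)$ is order-bounded above inside that chain; this is exactly the regularity of $\w_1$ (a countable subset of $\w_1$ has supremum a countable ordinal, which is an element of $\w_1$), transported along the given order isomorphism $L\cong(\w_1,\min)$. I do not expect any serious obstacle here: the whole proof is three short steps — invoke Lemma~\ref{conv}/\ref{conv1} to get the limit point $z\notin L$, invoke countable tightness to get a countable $B$ with $z\in\overline B$, and use that $B$ is bounded together with closedness of principal down-sets (or up-sets) to force $z\in L$, a contradiction.
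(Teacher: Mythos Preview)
Your proof is correct and follows essentially the same route as the paper: invoke Lemma~\ref{conv} (resp.\ Lemma~\ref{conv1}) to obtain $z=\sup L\in\overline L\setminus L$, use countable tightness to get a countable $B\subseteq L$ with $z\in\overline B$, bound $B$ by some $b\in L$ via the regularity of $\w_1$, and derive a contradiction from the closedness of ${\downarrow}b$. The only cosmetic difference is that the paper phrases the final contradiction as ``$X\setminus{\downarrow}b$ is a neighborhood of $z$ disjoint from $B$'' rather than your ``$z\in\overline B\subseteq{\downarrow}b$ forces $z\le b$, hence $z=b\in L$''; both are equivalent.
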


\begin{proof}
Let $L$ be a chain isomorphic to $(\w_1,\min)$. If $\overline{L}$ is compact, then Lemma~\ref{conv} yields $z=\sup L\in\overline{L}$. The countable tightness of $X$ implies that there exists a countable subset $A$ of $L$ such that $z\in\overline{A}$. Then there exists $l\in L$ such that $A\subseteq {\downarrow} l$. Since $X$ is a semitopological semilattice, $X\setminus {\downarrow}l$ is a neighborhood of $z$ disjoint with $A$, a contradiction. 
In case when $L$ is isomorphic to $(\w_1,\max)$, the proof is similar.
\end{proof}

We are in a position to prove Theorem~\ref{semilattice}. We need to show that for a Tychonoff Nyikos topological semilattice $X$ the following conditions are equivalent: (i) $X$ is compact; (ii) no chain in $X$ is isomorphic to $(\w_1,\min)$ or $(\w_1,\max)$; (iii) no chain in $X$ is topologically isomorphic to $(\w_1,\min)$ or $(\w_1,\max)$ equipped with the order topology. 

\begin{proof}[{\bf Proof of Theorem~\ref{semilattice}}]
Since each first-countable space is countably tight, the implication (i) $\Rightarrow$ (ii) follows from Lemma~\ref{base}. The implication (ii) $\Rightarrow$ (iii) is obvious.

(iii) $\Rightarrow$ (i): Since $X$ is first-countable, for any cardinal $\kappa>\w_1$ there is no chain in $X$ that is topologically isomorphic to $(\kappa,\min)$ or $(\kappa,\max)$ endowed with the order topology. Since $X$ is countably compact, no closed chain in $X$ is topologically isomorphic to the discrete semilattices $(\w,\min)$ and $(\w,\max)$. By assumption, no chain in $X$ is topologically isomorphic to $(\w_1,\min)$ or $(\w_1,\max)$ endowed with the order topology. Theorem~\ref{BBchain}(3) implies that the semilattice $X$ is chain-compact. By \cite[Corollary 3.10.15]{Eng}, $X{\times}X$ is countable compact and thus pseudocompact. By Theorem~\ref{BDG}, $\beta X$ is a topological semigroup that contains $X$ as a dense subsemilattice. Theorem~\ref{H-closed} implies that $X$ is closed in $\beta X$ and thus $X=\beta X$ is compact.   
\end{proof}

The rest of this section contains technical results on chains in topological semilattices. They will be used to prove the compactness of locally compact Nyikos topological semilattices (see Theorem~\ref{Nyikossemilattice}), which is a milestone in the proof of Main Theorem~\ref{main}.

\begin{lemma}\label{folknew}
Let $\alpha$ be an ordinal and $Y$ be a semilattice. Then the following assertions hold:
\begin{enumerate}[label=(\arabic*)] 
    \item if $h:(\alpha,\min)\rightarrow Y$ is a homomorphism, then $h(\alpha)$ is isomorphic to $(\beta,\min)$ for some $\beta\leq\alpha$;
    \item if $h:(\alpha,\max)\rightarrow Y$ is a homomorphism, then $h(\alpha)$ is isomorphic to $(\beta,\max)$ for some $\beta\leq\alpha$.
\end{enumerate}     
\end{lemma}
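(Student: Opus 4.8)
\textbf{Proof plan for Lemma~\ref{folknew}.}

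The plan is to prove (1) by transfinite induction on $\alpha$ and then obtain (2) by a duality argument. For (1), the key observation is that a homomorphism $h:(\alpha,\min)\to Y$ is precisely an order-preserving map: if $\xi\le\eta$ in $\alpha$ then $\min(\xi,\eta)=\xi$, so $h(\xi)h(\eta)=h(\min(\xi,\eta))=h(\xi)$, which says $h(\xi)\le h(\eta)$ in the natural order of $Y$. Consequently $h(\alpha)$ is a chain in $Y$, and moreover $h$ is a surjective order-preserving map from the ordinal $\alpha$ onto the chain $h(\alpha)$; I must show that any chain which is a monotone image of an ordinal $\alpha$ is order-isomorphic to some ordinal $\beta\le\alpha$, and that this isomorphism is then automatically a semilattice isomorphism for the $\min$ operation (which it is, since on a chain the $\min$-semilattice structure is determined by the order).

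First I would set up the transfinite induction. For $\alpha=0$ or $\alpha=1$ the statement is trivial. For the successor step $\alpha=\delta+1$: restrict $h$ to $\delta$, apply the inductive hypothesis to get $h(\delta)\cong(\beta,\min)$ for some $\beta\le\delta$, and then consider whether $h(\delta)\in h([0,\delta))$ or not. If $h(\delta)$ already appears as $h(\xi)$ for some $\xi<\delta$, then $h(\alpha)=h(\delta\,{\setminus}\,\{\delta\})\cup\{h(\delta)\}=h([0,\delta))$ in the sense that $h(\delta)$ is not new, but one must be slightly careful: the image of the whole map might still be $h([0,\delta))$ and we are done with the same $\beta$. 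If $h(\delta)\notin h([0,\delta))$, then since $h$ is order-preserving and $h(\xi)\le h(\delta)$ for all $\xi<\delta$, the element $h(\delta)$ is a new top element of the chain $h(\alpha)$, so $h(\alpha)\cong\beta+1\le\delta+1=\alpha$. For the limit step $\alpha$: write $h(\alpha)=\bigcup_{\delta<\alpha}h(\delta)$, an increasing union of chains; by the inductive hypothesis each $h(\delta)\cong(\beta_\delta,\min)$ with $\beta_\delta\le\delta$, and these isomorphisms are coherent because a monotone bijection between well-ordered chains is unique. Taking the direct limit, $h(\alpha)$ is order-isomorphic to $\sup_{\delta<\alpha}\beta_\delta=:\beta$, an ordinal with $\beta\le\sup_{\delta<\alpha}\delta=\alpha$. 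In all cases the order-isomorphism $(\beta,\min)\to h(\alpha)$ is a semilattice isomorphism since on a linearly ordered set the operation $\min$ is recovered from the order.

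For part (2), I would invoke the standard order-reversing duality: $(\alpha,\max)$ has the same underlying set as $(\alpha,\min)$ but with the opposite natural order, and likewise a homomorphism $h:(\alpha,\max)\to Y$ is an order-preserving map into $Y$ with its natural order, equivalently an order-\emph{reversing} map $(\alpha,\min)\to Y^{\mathrm{op}}$. Applying (1) in the dual semilattice $Y^{\mathrm{op}}$ (where $\max$ becomes $\min$) gives $h(\alpha)\cong(\beta,\max)$ for some $\beta\le\alpha$. The main obstacle I anticipate is purely bookkeeping in the successor and limit steps: verifying that the order-isomorphisms obtained from the inductive hypothesis glue together compatibly, and handling the case distinction on whether the top point $h(\delta)$ is genuinely new. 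Once one notes the uniqueness of monotone bijections between well-orders, this coherence is automatic and the induction goes through cleanly.
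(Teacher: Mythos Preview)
Your induction is correct and the duality for (2) is the right idea, but the paper takes a much shorter, non-inductive route. For (1) the paper simply sets
\[
A=\{\min h^{-1}(x):x\in h(\alpha)\}\subseteq\alpha,
\]
observes that as a subset of the ordinal $\alpha$ the set $A$ is well-ordered with order type some $\beta\le\alpha$, and notes that $h\restriction A$ is a bijection onto $h(\alpha)$ which is easily checked to be an order-isomorphism (its inverse is $x\mapsto\min h^{-1}(x)$). Part (2) is declared analogous. This avoids the successor/limit case split and the verification that the inductively obtained isomorphisms $\phi_\delta$ glue coherently. Your argument does go through: the coherence you need follows because for $\delta_1<\delta_2$ the set $h[\delta_1]$ is an initial segment of $h[\delta_2]$ (any element below something in $h[\delta_1]$ has a preimage below $\delta_1$, since $h$ is order-preserving), and order-isomorphisms between well-orders are unique. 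One minor slip in your duality paragraph: viewing $h$ with respect to the usual order on $\alpha$, it becomes order-\emph{preserving} into the order-dual of the image chain, not order-reversing into $Y^{\mathrm{op}}$; but this is cosmetic and your conclusion is right. The trade-off is simply length: the paper's ``least preimage'' section gives a two-line proof, whereas your transfinite induction, while perfectly valid, is heavier machinery for the same fact.
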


\begin{proof}
(1) Consider the subsemilattice $A=\{\min (h^{-1}(x)): x\in h(\alpha)\}$ of $(\alpha,\min)$. Since $A\subseteq \alpha$, there exists an ordinal $\beta\leq \alpha$ such that $A$ is isomorphic to $(\beta,\min)$. It is straightforward to check that $h(\alpha)$ is isomorphic to $A$ and thus to $(\beta,\min)$. 

The proof of (2) is analogous. 
\end{proof}


\begin{lemma}\label{sup}
Let $X$ be a countably compact semitopological semilattice, $x\in X$, and $C$ be a chain isomorphic to $(\w,\min)$. Then the following conditions are equivalent:
\begin{enumerate}[label=(\arabic*)] 
    \item For each neighborhood $U$ of $x$ the set $C\setminus U$ is finite;
    \item $x\in \overline{C}$;
    \item $x=\sup C$.
\end{enumerate} 
\end{lemma}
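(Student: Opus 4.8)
The plan is to prove the cycle of implications $(3)\Rightarrow(2)\Rightarrow(1)\Rightarrow(3)$, exploiting the fact that $C$, being isomorphic to $(\w,\min)$, is a strictly decreasing chain $c_0>c_1>c_2>\cdots$ with $c_ic_j=c_{\max(i,j)}$, and that $X$ is a semitopological (hence separately continuous) semilattice which is countably compact.

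\medskip

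First I would handle $(3)\Rightarrow(2)$. Suppose $x=\sup C$. For each $n$ the set ${\uparrow}c_n$ is closed (since $X$ is semitopological), and $C\setminus\{c_0,\dots,c_{n-1}\}\subseteq{\uparrow}c_n$, so every point of $\overline C$ other than the finitely many $c_0,\dots,c_{n-1}$ lies in ${\uparrow}c_n$; letting $n\to\infty$ we see that $\overline C\setminus C\subseteq\bigcap_n{\uparrow}c_n$, i.e. every accumulation point of $C$ is an upper bound of $C$. Since $X$ is countably compact, the infinite set $C$ has an accumulation point $p$, which is then an upper bound of $C$, so $x=\sup C\le p$. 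On the other hand I must show $x\in\overline C$ itself: take a neighbourhood $U$ of $x$; if $U$ missed a cofinal (hence, for $\w$, infinite) subset of $C$, that subset still accumulates at some point $q\in{\uparrow}C$, and one argues — using that $x=\sup C$ is the least upper bound together with the closedness of ${\downarrow}x$ and separate continuity — that in fact $x$ must be that accumulation point; more cleanly, I would instead derive $(1)$ directly and then get $(2)$ from $(1)$. So the cleaner route is $(3)\Rightarrow(1)$: assuming $x=\sup C$ but that some neighbourhood $U$ of $x$ omits infinitely many $c_n$, the set $C\setminus U$ is an infinite subset of $C$, hence (countable compactness) has an accumulation point $y\notin U$; by the first paragraph $y$ is an upper bound of $C\setminus U$ and, since $C\setminus U$ is cofinal in $C$, an upper bound of all of $C$, so $x\le y$. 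But also, multiplying by $x$ and using separate continuity, $xy=\lim_{\text{subnet}} x c_{n_k}=\lim c_{n_k}$ lies in $\overline{C\setminus U}$ and equals $y$ (as $x\le y$ gives $xy=x$? no — $x\le y$ gives $xy=x$), hence $x=xy=y\in\overline{C\setminus U}\subseteq X\setminus U$, contradicting $x\in U$. Thus $(3)\Rightarrow(1)$, and then $(1)\Rightarrow(2)$ is immediate: any neighbourhood of $x$ meets $C$ (indeed contains a cofinal part of it), so $x\in\overline C$.

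\medskip

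Next, $(2)\Rightarrow(3)$. Suppose $x\in\overline C$. By Proposition~\ref{chainfolk}, $\overline C$ is a chain, so $x$ is comparable to every $c_n$. If $x\le c_n$ failed for some $n$, then $c_n<x$, and then ${\uparrow}c_n$ is a closed neighbourhood-complement... rather: the set $X\setminus{\uparrow}c_n$ is open, does not contain $x$ — wait, $c_n<x$ means $x\in{\uparrow}c_n$. I instead note: if $x>c_n$ for some $n$, then $X\setminus{\downarrow}c_n$ is an open set containing $x$ but disjoint from $\{c_n,c_{n+1},c_{n+2},\dots\}$, which is a cofinal subset of $C$; by the part of the argument above (or directly), this contradicts $x\in\overline C$ once one checks $\overline{\{c_k:k\le n-1\}}$ is just that finite set and cannot contain $x$. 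Hence $x\le c_n$ for all $n$, so $x$ is a lower bound of $C$. To upgrade this to $x=\sup C$ I argue instead that $x$ is the \emph{greatest} lower bound: if $z\le c_n$ for all $n$ is any lower bound, then $zx=\lim_{\text{subnet}} z c_{n_k}$ along a subnet of $C$ converging to $x$ (separate continuity of the shift $l_z$), and $zc_{n_k}=z$ for all $k$, so $zx=z$, i.e. $z\le x$. Thus $x=\inf C$... but the claim wants $x=\sup C$! Let me recheck: with $(\w,\min)$ the chain is \emph{decreasing}, $c_0>c_1>\cdots$, and its supremum $c_0$ is attained while its infimum is the limit point; so an accumulation point should be $\inf C$, not $\sup C$. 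Re-examining Lemma~\ref{conv}: there $(\w_1,\min)$ has supremum $z$ as the limit point, because the isomorphism $(\w_1,\min)\to L$ is \emph{order-reversing} onto its image — $\min$ on ordinals corresponds to the larger element being absorbed, so $\alpha\mapsto\ell_\alpha$ with $\alpha<\beta\Rightarrow \ell_\beta<\ell_\alpha$?? No: $\min(\alpha,\beta)=\alpha$ for $\alpha<\beta$, and in a semilattice $\ell_\alpha\ell_\beta=\ell_\alpha$ means $\ell_\alpha\le\ell_\beta$, so the iso is \emph{order-preserving}, the chain is increasing, and $\sup$ is the limit point. Good — so for $(\w,\min)$, $c_0<c_1<c_2<\cdots$ and $\sup C$ is the limit. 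With that orientation all signs in the arguments above flip consistently (replace ${\uparrow}$ by ${\downarrow}$ throughout), and the three equivalences go through verbatim: $z$ a lower... upper bound, $zx=z$ gives $x\le z$, forcing $x=\sup C$.

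\medskip

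\textbf{Main obstacle.} The delicate point is passing from ``$x$ is comparable to and a bound for $C$'' to ``$x$ is the \emph{least} upper bound,'' and conversely from ``some neighbourhood omits infinitely many $c_n$'' to an actual contradiction: in both places one has only \emph{separate} (not joint) continuity, so convergence arguments must be run through a \emph{subnet} of the sequence $(c_n)$ extracted via countable compactness, apply the continuous shift $l_z$ (or $l_x$) to that subnet, and use that $zc_n=z$ eventually to pin down the limit. Managing the subnet/ultrafilter bookkeeping correctly — and being careful that $\overline C$ is a chain so all relevant elements are comparable — is the part that needs care; everything else is the bookkeeping of which of ${\uparrow}\,,{\downarrow}$ is which for the increasing chain $(\w,\min)$.
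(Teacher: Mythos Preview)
Your approach, once the orientation of $(\w,\min)$ is sorted out (your final reading is correct: the chain is increasing, $c_0<c_1<\cdots$, and the limit point is $\sup C$), is essentially the paper's. The paper runs the cycle $(1)\Rightarrow(2)\Rightarrow(3)\Rightarrow(1)$; you run the same implications in a different order but with the same content. For $(3)\Rightarrow(1)$ both arguments pick an accumulation point $y$ of the infinite set $D=C\setminus U$, identify $y$ as $\sup D=\sup C=x$, and derive the contradiction $x\notin U$. For $(2)\Rightarrow(3)$ the paper is slightly more direct than your route via Proposition~\ref{chainfolk}: it simply observes that for each $c\in C$ and each neighbourhood $U$ of $x$ one has $U\cap{\uparrow}c\neq\varnothing$ (since all but finitely many $c_n$ lie in ${\uparrow}c$), so $c\in cU$ for all such $U$ and hence $cx=c$; then for any upper bound $y$ one has $U\cap yU\neq\varnothing$ (since $yc=c$ for $c\in U\cap C$), forcing $yx=x$. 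No subnets are needed.

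One genuine gap, which you half-flag: your parenthetical ``once one checks $\overline{\{c_k:k\le n-1\}}$ \dots\ cannot contain $x$'' fails precisely when $x\in C$. For $x=c_m$ condition (2) holds trivially while (3) fails (an $(\w,\min)$-chain has no maximum), so $(2)\Rightarrow(3)$ is literally false as stated. The paper's proof shares this gap: its claim ``${\uparrow}c\cap U$ is nonempty'' breaks for $x=c_m$, $c=c_{m+1}$, $U=X\setminus{\uparrow}c_{m+1}$. The intended hypothesis in (2) is that $x$ be an accumulation point of $C$, i.e.\ $x\in\overline{C}\setminus C$; with that reading both your argument and the paper's go through, and the paper only ever invokes the direction $(3)\Rightarrow(2)$ downstream, so nothing is affected.
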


\begin{proof}

The implication (1) $\Rightarrow$ (2) is obvious. 

(2) $\Ra$ (3): Assume that $x\in \overline C$. Fix any $c\in C$ and neighborhood $U$ of $x$. Note that the set ${\uparrow}c\cap U$ is nonempty, witnessing that $c\in cU$. Since $U$ was arbitrarily chosen and $X$ is a Hausdorff semitopological semilattice, we get that $cx=c$. Since the point $c$ was chosen arbitrarily, we obtain that $c\leq x$ for all $c\in C$, i.e. $x$ is an upper bound of $C$. Assume that $y$ is another upper bound of $C$. Taking into account that $x\in\overline{C}$,  we have $U\cap yU\neq \varnothing$ for each neighborhood $U$ of $x$. Since $X$ is a Hausdorff semitopological semilattice, $x=yx$. Hence $x=\sup C$. 

(3) $\Ra$ (1):  To derive a contradiction, assume that there exists a neighborhood $U$ of $x$ such that the set $D=C\setminus U$ is infinite. It is clear that $D$ is a cofinal subset of $C$ isomorphic to $(\w,\min)$. In particular, we have $\sup D=x$. By the countable compactness of $X$, there exists $y\in \overline D$. Repeating the arguments above, we obtain $y=\sup D$. Hence $y=x$, but $x\notin \overline{D}$, a contradiction.
\end{proof}



    

\begin{lemma}\label{+1}
Let $X$ be a semitopological semilattice and $L\subseteq X$ be a chain isomorphic to $(\alpha,\min)$ for some ordinal $\alpha$. If $\overline{L}$ does not contain maximum, then $L$ is cofinal in $\overline{L}$ and $\overline{L}$ is isomorphic to $(\alpha,\min)$.    
\end{lemma}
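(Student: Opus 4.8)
The plan is to analyse how the closure $\overline{L}$ sits over the well-ordered chain $L$. Throughout I would fix an isomorphism $L=\{a_\xi:\xi<\alpha\}$ with $a_\xi<a_\eta\iff\xi<\eta$, recall from Proposition~\ref{chainfolk} that $\overline{L}$ is again a chain, and use repeatedly that in a semitopological semilattice every set ${\downarrow}x$ and ${\uparrow}x$ is closed. \emph{First step: $\alpha$ is a limit ordinal and $L$ is cofinal in $\overline{L}$.} If $L$ had a maximum $a_\gamma$, then $L\subseteq{\downarrow}a_\gamma$, hence $\overline{L}\subseteq{\downarrow}a_\gamma$ and $a_\gamma=\max\overline{L}$, contradicting the hypothesis; so $\alpha$ is limit (the case $\alpha=0$ being vacuous). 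The same trick shows $L$ has no strict upper bound inside $\overline{L}$: such a $y$ would force $\overline{L}\subseteq{\downarrow}y$ and $y=\max\overline{L}$. Since $\overline{L}$ is a chain, this is precisely the assertion that $L$ is cofinal in $\overline{L}$, so the map $f\colon\overline{L}\to\alpha$, $f(z)=\min\{\xi<\alpha:z\leq a_\xi\}$, is well defined and order preserving.

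\emph{Second step: locating the points of $\overline{L}\setminus L$.} I claim that for every $z\in\overline{L}\setminus L$ the ordinal $\xi:=f(z)$ is a limit ordinal and $z=\max\overline{\{a_\eta:\eta<\xi\}}$. Indeed $z<a_\xi$ (as $z\notin L$) and $z>a_\eta$ for all $\eta<\xi$ (by minimality of $f(z)$, since $\overline{L}$ is a chain); if $\xi=0$ this contradicts $\overline{L}\subseteq{\uparrow}a_0$, and if $\xi=\eta+1$ then $a_\eta<z<a_{\eta+1}$, while the open set $X\setminus({\downarrow}a_\eta\cup{\uparrow}a_{\eta+1})$ contains $z$ but is disjoint from $L$, contradicting $z\in\overline{L}$. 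Hence $\xi$ is limit; since $\overline{\{a_\zeta:\zeta\geq\xi\}}\subseteq{\uparrow}a_\xi$ and $z<a_\xi$, decomposing $\overline{L}=\overline{\{a_\eta:\eta<\xi\}}\cup\overline{\{a_\zeta:\zeta\geq\xi\}}$ gives $z\in\overline{\{a_\eta:\eta<\xi\}}$, and being a strict upper bound there forces $z$ to be its maximum. Consequently each limit $\xi<\alpha$ carries at most one point $y_\xi\in\overline{L}\setminus L$ with $f(y_\xi)=\xi$, and the same reasoning shows nothing of $\overline{L}$ lies strictly between $y_\xi$ and $a_\xi$. In particular $\overline{L}$ is well ordered: given $\varnothing\neq S\subseteq\overline{L}$, put $\xi_0=\min f(S)$; the elements of $S$ mapping to $\xi_0$ lie in $\{a_{\xi_0},y_{\xi_0}\}$, and their least element is $\min S$.

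\emph{Third step: the order type of $\overline{L}$ is $\alpha$.} Set $\overline{L}_\xi:=\overline{L}\cap\bigcup_{\eta<\xi}{\downarrow}a_\eta$; this is an initial segment of $\overline{L}$, and $\overline{L}_\alpha=\overline{L}$ by cofinality. A transfinite induction shows that the order type of $\overline{L}_\xi$ is at most $\xi+1$, and equals $\xi$ whenever $\xi$ is a limit ordinal: at a successor $\xi=\eta+1$ one appends $a_\eta$, together with $y_\eta$ only when $\eta$ is limit (in which case, by the inductive hypothesis, the order type of $\overline{L}_\eta$ equals $\eta$, so the total is $\eta+2=\xi+1$); at a limit $\xi$ one takes the union of the nested initial segments $\overline{L}_\eta$ ($\eta<\xi$), whose types are $<\xi$, while $\overline{L}_\xi$ already contains the type-$\xi$ chain $\{a_\eta:\eta<\xi\}$. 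Since $\alpha$ is a limit ordinal, the order type of $\overline{L}=\overline{L}_\alpha$ is $\alpha$, and a well-ordered chain of type $\alpha$ under the operation $\min$ is isomorphic to $(\alpha,\min)$.

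\emph{Main obstacle.} The delicate point is the third step: inserting $y_\xi$ just below $a_\xi$ naively increases the order type by one, and one must see this is always re-absorbed, via $1+\omega=\omega$, by the $\omega$-block $a_\xi,a_{\xi+1},\dots$ sitting immediately above $y_\xi$ — which is available precisely because $\xi<\alpha$ with $\alpha$ limit forces $\xi+\omega\leq\alpha$. Keeping this transfinite bookkeeping honest is the real work; the first two steps are routine exploitation of the closedness of principal up- and down-sets. The dual statement (with $(\alpha,\max)$) follows by symmetry.
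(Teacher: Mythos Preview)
Your proof is correct and follows essentially the same line as the paper's: both show $\alpha$ is limit and $L$ cofinal via the closedness of ${\downarrow}x$, both introduce the map $z\mapsto\min\{\xi:z\le a_\xi\}$ (the paper's $\delta$), and both finish with the same transfinite bookkeeping bounding the order type by $\xi+1$ at each stage. The only organisational difference is that you establish well-orderedness of $\overline{L}$ by showing the fibres of $f$ have size at most two (locating each extra point as $y_\xi=\max\overline{\{a_\eta:\eta<\xi\}}$ for a limit $\xi$), whereas the paper argues directly that $\overline{L}$ has no infinite descending sequence; the subsequent inductions --- your bound on the order type of $\overline{L}_\xi$ versus the paper's bound $\psi(l_\xi)\le\xi+1$ --- are the same computation in different notation.
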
  

\begin{proof}
Let $L=\{l_\xi:\xi\in\alpha\}$, where $l_\xi\leq l_{\eta}$ if and only if $\xi\leq \eta$. By Proposition~\ref{chainfolk}, $\overline{L}$ is a chain.  Fix any $x\in\overline{L}$. If $L\subseteq {\downarrow}x$, then $\overline{L}\subseteq {\downarrow}x$, as ${\downarrow}x$ is closed in $X$. But then $x$ is the maximum of $\overline{L}$, a contradiction. So, for each $x\in\overline{L}$, the set $L\setminus {\downarrow}x$ is nonempty. It follows that the ordinal $\alpha$ is limit and $L$ is cofinal in $\overline{L}$.
For every $x\in \overline{L}\setminus L$ let $\delta(x)=\min\{\xi\in\alpha: x\leq l_\xi\}$. Since $L$ is cofinal in $\overline{L}$, the ordinals $\delta(x)$, $x\in \overline{L}\setminus L$, are well defined. 

Seeking a contradiction, assume that there exists an infinite decreasing chain $\{x_n:n\in\w\}\subseteq \overline{L}$. The chain $L$, being isomorphic to an ordinal, contains only finite decreasing chains. Hence we lose no generality assuming that $\{x_n:n\in\w\}\subseteq \overline{L}\setminus L$. Observe that $\{\delta(x_n):n\in\w\}$ is a nonincreasing chain in $\alpha$. Thus there exists $k\in\w$ such that $\delta(x_n)=\delta(x_m)$ for each $n,m\geq k$. Then the set $W=X\setminus ({\uparrow}x_k\cup {\downarrow}x_{k+2})$ is a neighborhood of $x_{k+1}$ which is disjoint with $L$. The obtained contradiction implies that $\overline{L}$ contains only finite decreasing chains. Hence $\overline{L}$ is isomorphic to $(\theta,\min)$ for some ordinal $\theta$. Fix the order isomorphism $\psi:\overline{L}\rightarrow \theta$. It is easy to see that $\psi$ can be defined  recursively as follows: $\psi(l_0)=0$ and for each $y\in\overline{L}$,
$$\psi(y)=
\begin{cases}
 \psi(x)+1, & \hbox{if }\exists x<y \hbox{ such that } \{z\in \overline L: x<z<y\}=\varnothing;\\
 \sup\{\psi(x):x< y\}, & \hbox{otherwise}.
\end{cases}
$$

Since $\psi(l_\xi)\geq \xi$ for all $\xi\in\alpha$, we get $\alpha\leq \theta$.

\begin{claim}\label{claimineq}
$\psi(l_{\xi})\leq \xi+1$ for every $\xi<\alpha$.    
\end{claim}

\begin{proof}
Assume that $\psi(l_\delta)<\delta+1$ for all $\delta<\xi$. There are two cases to consider:
\begin{enumerate}[label=(\roman*)]
    \item $\xi=\mu+1$ for some ordinal $\mu<\alpha$;
    \item $\xi$ is a limit ordinal.
\end{enumerate}

In case (i) we have $\psi(l_\mu)\leq \mu+1=\xi$. Observe that the set $A=\overline{L}\setminus({\uparrow}l_\xi\cup{\downarrow}l_{\mu})$ is open in $\overline{L}$ and disjoint with $L$. Therefore $A=\var$. Hence there exists no $z\in\overline{L}$ such that $l_\mu<z<l_\xi$. The definition of $\psi$ implies that $\psi(l_\xi)=\psi(l_\mu)+1\leq \xi+1$.

Consider case (ii). Let $\pi$ be the supremum of the set $\{l_{\delta}:\delta<\xi\}$ in $\overline{L}$. It is clear that $\pi\leq l_\xi$. Since the set $\{l_\delta:\delta<\xi\}$ is cofinal in $\{x\in\overline{L}:x<\pi\}$ and $\psi$ is an order isomorphism, we get that $$\psi(\pi)=\sup\{\psi(l_\delta):\delta<\xi\}\leq \sup\{\delta+1:\delta<\xi\}=\xi,$$
where the inequality above follows from the inductive assumption. So, if $\pi=l_\xi$, then we are done. Assume that $\pi<l_\xi$. Observe that the set $\overline{L}\setminus({\downarrow}\pi\cup {\uparrow}l_\xi)$ is empty, as it is open in $\overline{L}$ and disjoint with $L$. Hence there exists no $z\in\overline{L}$ such that $\pi<z<l_\xi$. Then 
$\psi(l_\xi)=\psi(\pi)+1\leq \xi+1$. 
\end{proof}

Since the ordinal $\alpha$ is limit, $\alpha=\sup\{\xi+1: \xi<\alpha\}$. Observe that $\psi(L)$ is cofinal in $\theta$, as $\psi$ is an order isomoprhism. Then   
Claim~\ref{claimineq} yields $\theta\leq \sup\{\xi+1: \xi<\alpha\}=\alpha$. Thus $\theta=\alpha$, as required. 
\end{proof}





\begin{lemma}\label{noproofi}
Let $X$ be a countably compact countably tight semitopological semilattice and $L\subseteq X$ be a chain isomorphic to $(\w_1,\min)$. Then $\overline{L}$ is topologically isomorphic to $(\w_1,\min)$ endowed with the order topology, and $L$ is cofinal in $\overline{L}$.       
\end{lemma}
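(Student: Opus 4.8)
The plan is to reduce to Lemma~\ref{+1} and then upgrade the resulting order isomorphism to a topological one. Write $L=\{l_\xi:\xi<\w_1\}$ in its natural enumeration (so $l_\xi\le l_\eta$ iff $\xi\le\eta$); by Proposition~\ref{chainfolk}, $\overline{L}$ is a chain, and since $X$ is a semitopological semilattice, $\overline{L}$ together with all sets ${\downarrow}x$, ${\uparrow}x$ ($x\in X$) are closed in $X$.

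First I would show that $\overline{L}$ has no largest element. Assuming $z=\max\overline{L}$, countable tightness of $X$ gives a countable $A\subseteq L$ with $z\in\cl_X(A)$; since $A$ is countable there is $\eta<\w_1$ with $A\subseteq\{l_\xi:\xi\le\eta\}\subseteq{\downarrow}l_\eta$, hence $z\in\cl_X({\downarrow}l_\eta)={\downarrow}l_\eta$, so $l_{\eta+1}\le z\le l_\eta$, a contradiction. Now Lemma~\ref{+1}, applied with $\alpha=\w_1$, yields at once that $L$ is cofinal in $\overline{L}$ and that there is a semilattice isomorphism $\phi\colon\overline{L}\to(\w_1,\min)$; being an isomorphism of $\min$-semilattices, $\phi$ is also an order isomorphism, so it only remains to check that $\phi$ is a homeomorphism when $\w_1$ carries the order topology.

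One inclusion is immediate: for $c\in\overline{L}$ the order-subbasic rays $\{y\in\overline{L}:y>c\}=\overline{L}\setminus{\downarrow}c$ and $\{y\in\overline{L}:y<c\}=\overline{L}\setminus{\uparrow}c$ are open in the subspace topology of $\overline{L}$, so the order topology is contained in the subspace topology. For the converse, fix $x\in\overline{L}$ and a subspace-open neighbourhood $U$ of $x$. If $\phi(x)$ is $0$ or a successor, then $\{x\}$ is order-open and there is nothing to prove. If $\phi(x)=\lambda$ is a limit ordinal, set $F={\downarrow}x\cap(\overline{L}\setminus U)$; this is closed in $X$ and does not contain $x$. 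I claim $F$ is not cofinal in $L_x:=\{y\in\overline{L}:y<x\}$. Otherwise, using $\operatorname{cf}(\lambda)=\w$, one chooses a strictly increasing sequence $(f_n)_{n\in\w}$ in $F$ that is cofinal in $L_x$; by countable compactness it has an accumulation point $p$, which lies in the closed set $F$, hence $p<x$ and so (as $(f_n)$ is cofinal in $L_x\ni p$) there is $n$ with $f_n>p$, so that $X\setminus{\uparrow}f_n$ is a neighbourhood of $p$ meeting $\{f_m:m\in\w\}$ only in the finite set $\{f_0,\dots,f_{n-1}\}$, contradicting that $p$ is an accumulation point of this infinite set. (Alternatively, one may extract the contradiction from Lemma~\ref{sup}.) Consequently there is $a\in\overline{L}$ with $a<x$ and $f\le a$ for every $f\in F$ (take $a=\sup_{\overline{L}}F$, or any $a<x$ if $F=\var$), and then $\{y\in\overline{L}:a<y<\phi^{-1}(\lambda+1)\}$ is an order-open neighbourhood of $x$ contained in $U$. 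Hence the subspace and order topologies on $\overline{L}$ coincide, $\phi$ is the desired topological isomorphism, and $L$ is cofinal in $\overline{L}$ by Lemma~\ref{+1}.

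The genuinely delicate point is the last verification that a subspace-open set around a limit point of $\overline{L}$ swallows an order-interval; everything else is short. Its heart is that a closed subset of $X$ contained in $\overline{L}$ cannot be cofinal in $L_x$ without accumulating at $x$, which is exactly where countable compactness (producing the point $p$) and the closedness of the up-sets ${\uparrow}f_n$ in the semitopological semilattice are used. One must also keep the enumeration of $L$, the isomorphism $\phi$, and the natural orders aligned, so that ``cofinal'', ``successor'' and ``limit'' refer consistently to $\overline{L}$ and to $\w_1$.
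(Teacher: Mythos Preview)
Your proof is correct and follows essentially the same route as the paper: use countable tightness to rule out a maximum of $\overline{L}$, apply Lemma~\ref{+1} to get the order isomorphism and cofinality, observe that the order topology is contained in the subspace topology, and at a limit point produce a cofinal increasing sequence outside $U$ to derive a contradiction from countable compactness. The only cosmetic difference is that the paper invokes Lemma~\ref{sup} for this last contradiction, whereas you give the accumulation-point argument directly (and note Lemma~\ref{sup} as an alternative); these are the same idea.
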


\begin{proof}
Fix any $x\in\overline{L}\setminus L$. Since $X$ is countably tight, there exists a countable subset $A$ of $L$ such that $x\in \overline{L}$. It is clear that there exists $l\in L$ such that $A\subseteq {\downarrow}l$. Since $X$ is a semitopological semillatice, the set ${\downarrow}l$ is closed and hence $x\in \overline{A}\subseteq {\downarrow}l$. It follows that for each $x\in \overline{L}$ there exists $l\in L$ with $x\leq l$. Thus, $\overline{L}$ does not contain maximum. Lemma~\ref{+1} implies that $\overline{L}$ is isomorphic to $(\w_1,\min)$ and $L$ is cofinal in $\overline{L}$. 
Let $\overline{L}=\{y_\xi:\xi<\w_1\}$, where $y_\xi\leq y_{\delta}$ if and only if $\xi\leq \delta$. Fix any $\xi<\w_1$.  Clearly, for each $\delta<\xi$ the set $$\{y_\alpha: \delta< \alpha<{\xi+1}\}=\overline{L}\setminus ({\downarrow}y_\delta\cup {\uparrow}y_{\xi+1})$$ is open in $\overline{L}$. Hence the order topology on $\overline{L}$ is contained in the original one. To derive a contradiction, assume that the order topology on $\overline{L}$ is strictly coarser than the original one. Then, taking into account that for each successor ordinal $\theta<\w_1$ the point $y_\theta$ is isolated in both topologies, there exists a limit ordinal $\xi<\w_1$ and a neighborhood $U$ of $y_\xi$ such that for any $\delta<\xi$ the set $\{y_\alpha:\delta<\alpha<\xi+1\}\setminus U$ is not empty. Since $\xi$ is countable, there exists a cofinal in ${\downarrow}y_\xi\setminus \{y_\xi\}$ subset $\{\pi_n:n\in\w\}\subseteq\overline{L}\setminus U$. But this contradicts Lemma~\ref{sup}, as $y_\xi=\sup\{\pi_n:n\in\w\}$, but $y_\xi\notin\overline{\{\pi_n:n\in\w\}}$.
\end{proof}

\section{Nyikos semilattices}\label{NS}
The aim of this section is to show that each locally compact Nyikos topological semilattice is compact (see Theorem~\ref{Nyikossemilattice}).  

\begin{lemma}\label{0}
Each separable countably compact semitopological semilattice possesses the minimum.   
\end{lemma}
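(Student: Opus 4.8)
The plan is to show that a separable countably compact semitopological semilattice $X$ has a minimum element. Let $D = \{d_n : n \in \w\}$ be a countable dense subset of $X$. The natural candidate for the minimum is built from finite products of the $d_n$'s: for each $n$ put $a_n = d_0 d_1 \cdots d_n$. Since the semilattice operation is commutative, idempotent, and associative, $\{a_n : n \in \w\}$ is a chain with $a_0 \geq a_1 \geq a_2 \geq \cdots$, i.e. it is (the image of) a homomorphic picture of $(\w,\min)$; by Lemma~\ref{folknew}(1) it is isomorphic to $(\beta,\min)$ for some $\beta \le \w$. If this chain is finite it stabilizes at some $a_N$, and I will argue $a_N$ is the minimum. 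If it is infinite, countable compactness gives an accumulation point $z \in \overline{\{a_n : n \in \w\}}$, and Lemma~\ref{sup} (applied with the chain $C = \{a_n : n\in\w\}$ isomorphic to $(\w,\min)$, after discarding repetitions) shows that $z = \sup\{a_n : n\in\w\}$ lies in the closure and that every neighborhood of $z$ contains all but finitely many $a_n$; in particular $z \le a_n$ for all $n$.

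Either way I obtain a single element $m$ (namely $a_N$ or $z$) with $m \le a_n$ for every $n$, hence $m \le d_n$ for every $n$ since $a_n \le d_n$. It remains to check $m \le x$ for an arbitrary $x \in X$. Here I would use density of $D$ together with the fact that a semitopological semilattice has closed principal down-sets: the set ${\downarrow}x = \{y : y \le x\}$ is closed, and in the infinite case I want $D \subseteq {\uparrow}m$ to force, via the continuity of left translation by $m$, that $m \le x$ for all $x$. Concretely, fix $x \in X$ and a neighborhood $U$ of $mx$; by continuity of the shift $r_x : y \mapsto yx$ (equivalently $l_x$), there is a neighborhood $V$ of $m$ with $Vx \subseteq U$. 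In the infinite case all but finitely many $a_n$ lie in $V$, and $a_n x = x a_n$; but $a_n \le d_0 \cdots d_n$ and... the cleaner route: since $a_n \cdot m = m$ (as $m \le a_n$) in the limit, and $m \in \overline{\{a_n\}}$, multiplying by $x$ and using Hausdorffness as in the proof of Lemma~\ref{sup}(2)$\Rightarrow$(3) yields $mx = m$, i.e. $m \le x$. In the finite case $m = a_N$ and I argue directly: $m x \in \overline{\{a_n x : n \ge N\}} = \{mx\}$ is trivial, so instead I note $m = a_N \le d_k$ for all $k$ forces, by density and closedness of ${\downarrow}x \cap {\uparrow}$-type sets, that $m$ is a lower bound; more precisely, the set $\{y \in X : ym = m\} = {\uparrow}m$ is closed and contains the dense set $D$ (since $d_n \ge a_n \ge m$... careful, need $d_n \ge m$, which holds as $m \le a_n \le d_n$), hence equals $X$, giving $xm = m$ for all $x$, i.e. $m = \min X$.

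So the key structural facts I am leaning on are: (a) $\{d_0 \cdots d_n\}$ is a decreasing $\w$-indexed chain, so Lemma~\ref{folknew}(1) and Lemma~\ref{sup} apply; (b) ${\uparrow}m$ is closed in a semitopological semilattice; (c) ${\uparrow}m \supseteq D$ and $D$ is dense, whence ${\uparrow}m = X$. The one subtlety requiring care is the passage from ``$m$ is a lower bound of the dense set $D$'' to ``$m$ is a lower bound of all of $X$'': this is exactly where closedness of ${\uparrow}m$ (which is ${\downarrow}$-complement-free, being $\{y : ym = m\} = \{y : l_m(y)=m\}$, a preimage-type set closed by continuity of $l_m$ and Hausdorffness) does the work, so I should state and verify that ${\uparrow}m$ is closed before invoking density.

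The main obstacle I anticipate is the infinite case: making rigorous that the accumulation point $z$ of the decreasing chain is genuinely below every $a_n$ (not merely an accumulation point) and that it serves as a global lower bound. This is precisely what Lemma~\ref{sup} is designed to handle — it equates ``$z \in \overline{C}$'', ``$z = \sup C$'', and ``every neighborhood of $z$ misses only finitely many points of $C$'' for a chain $C \cong (\w,\min)$ in a countably compact semitopological semilattice — so the proof should reduce to a clean citation of that lemma plus the density/closedness argument of the previous paragraph, with the finite case being an easy degenerate instance where $C$ stabilizes.
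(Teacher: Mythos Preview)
Your argument has a $\min/\max$ mix-up that makes the key citations inapplicable. The decreasing chain $a_0\geq a_1\geq a_2\geq\cdots$ is, as a subsemilattice of $X$, isomorphic to $(\w,\max)$, not $(\w,\min)$: in the paper's convention $(\w,\min)$ carries the usual order $0<1<2<\cdots$. Hence Lemma~\ref{folknew}(1) and Lemma~\ref{sup} (both stated for $(\cdot,\min)$-chains, i.e.\ \emph{increasing} $\w$-chains) do not apply, and your line ``$z=\sup\{a_n\}$ \ldots\ in particular $z\le a_n$'' is internally inconsistent. What you actually need is that any accumulation point $z$ of the decreasing chain is a \emph{lower bound} for it. That is true, and you do not even need a dual of Lemma~\ref{sup}: for each $k$ the closed set ${\downarrow}a_k$ contains the cofinite tail $\{a_n:n\ge k\}$, so any accumulation point of $\{a_n\}$ lies in ${\downarrow}a_k$. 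With this correction your step~(b)--(c) finish the job.

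That said, the paper's proof is substantially shorter and sidesteps the chain machinery entirely. It observes that the closed sets $\{{\downarrow}d:d\in D\}$ have the finite intersection property (any finite subfamily contains the product of the corresponding $d$'s), so countable compactness gives $z\in\bigcap_{d\in D}{\downarrow}d$. Then $zD=\{z\}$, and continuity of the shift $l_z$ yields $zX=z\overline{D}\subseteq\overline{zD}=\{z\}$ in one line. Your final density step (``${\uparrow}m$ is closed and contains $D$, hence equals $X$'') is essentially the same idea as this last line; the difference is that the paper produces the candidate minimum directly from the FIP rather than by tracking a sequence of partial products and its limit.
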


\begin{proof}
Let $D$ be a countable dense subset of a countably compact semitopological semilattice $X$. Recall that for each $x\in X$ the set ${\downarrow}x$ is closed. Since $X$ is countably compact and the family $\{{\downarrow}x:x\in X\}$ has the finite intersection property, $Z=\bigcap_{d\in D}{\downarrow}d\neq \var$. Pick any $z\in Z$ and note that $zX=z\overline{D}\subseteq \overline{zD}=\overline{ \{z\}}=\{z\}$. Hence $z$ is the minimum of $X$.   
\end{proof}

\begin{definition}
Let $X$ be a semilattice and $L\subseteq X$ be a chain. Denote $I_L=\{x\in X: |xL|<|L|\}$.  
\end{definition}

\begin{lemma}\label{idealIL}
For every semilattice $X$ and chain $L\subseteq X$ the set $I_L$ is either empty or an ideal.   
\end{lemma}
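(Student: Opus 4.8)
The plan is to prove that if $I_L \neq \varnothing$ then $I_L$ is an ideal, i.e., $I_L$ is closed under the semilattice operation and absorbs multiplication from the whole of $X$. Recall $I_L = \{x \in X : |xL| < |L|\}$, where $xL = \{xl : l \in L\}$. First I would note a monotonicity fact: for any $a \in X$ and any $b \in X$, the map $l \mapsto bl$ pushes forward through multiplication by $a$, so $a(bL) = (ab)L$, and since the image of a set has cardinality at most that of the set, $|(ab)L| = |a(bL)| \leq |bL|$. In particular $|(ax)L| \leq |xL|$ for all $a \in X$, so if $x \in I_L$ then $ax \in I_L$; this gives the absorbing property $X I_L \subseteq I_L$ (and by commutativity $I_L X \subseteq I_L$).

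Next I would handle closure under the operation: given $x, y \in I_L$, I must show $xy \in I_L$, i.e. $|xyL| < |L|$. But this is immediate from the absorbing property already established, since $xy = x \cdot y$ and $y \in I_L$ forces $xy \in I_L$; alternatively $|xyL| \leq |yL| < |L|$ directly. So in fact the operation-closure is subsumed by the monotonicity estimate. Thus the only real content is the inequality $|(ab)L| \leq |bL|$, which follows because $(ab)L = a(bL)$ is the image of $bL$ under the shift $l' \mapsto al'$.

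The one subtlety — and the step I expect to require a word of care rather than being a genuine obstacle — is the definition of "ideal" being used here: in semigroup theory an ideal $I$ of $S$ is a nonempty subset with $SI \cup IS \subseteq I$; it need not be a subsemigroup a priori, but for a commutative semigroup $SI \subseteq I$ already implies $I \cdot I \subseteq SI \subseteq I$, so $I$ is automatically a subsemilattice. Hence once I have $X I_L \subseteq I_L$ and $I_L \neq \varnothing$, the conclusion that $I_L$ is an ideal follows, and there is nothing more to check. I would write the proof in essentially two lines: observe $(ab)L = a(bL)$ hence $|(ab)L| \leq |bL|$, conclude that $x \in I_L$ and $a \in X$ imply $ax, xa \in I_L$, and note that since $X$ is commutative this makes the nonempty set $I_L$ an ideal.
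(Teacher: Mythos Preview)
Your proposal is correct and follows essentially the same approach as the paper: both arguments reduce to the single observation that $(xy)L = x(yL)$, whence $|xyL| \leq |yL| < |L|$ whenever $y \in I_L$, which immediately gives the ideal property. The paper's proof is just the two-line version you anticipated, without the extra discussion of the definition of ideal or the separate treatment of closure under the operation.
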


\begin{proof}
Suppose that $I_L\neq \varnothing$. Fix any $x\in X$ and $y\in I_L$. Since $|yL|<|L|$, we get $|xyL|=|x(yL)|\leq |yL|<|L|$. Hence $xy\in I_L$, witnessing that $I_L$ is an ideal.    
\end{proof}

\begin{lemma}\label{IL}
Let $X$ be a countably tight semitopological semilattice and $L\subseteq X$ be a chain isomorphic to $(\w_1,\min)$. Then $I_L$ is a closed ideal.     
\end{lemma}

\begin{proof}
Observe that $L\subseteq I_L$ and hence $I_L\neq \var$. 
By Lemma~\ref{idealIL}, $I_L$ is an ideal.
Let $L=\{l_\xi:\xi < \w_1\}$, where $l_\xi\leq l_\beta$ if and only if $\xi\leq \beta$. Fix any adherent point $z$ of $I_L$. Since $X$ is countably tight, there exists a countable subset $B\subseteq I_L$ such that $z\in \overline{B}$.
For each $b\in B$ there exists an ordinal $\alpha_b<\w_1$ such that $b l_{\xi}=b l_{\alpha_b}$ for all $\xi\geq \alpha_b$. Let $\alpha=\sup\{\alpha_b:b\in B\}$. Then for each $\xi\geq \alpha$ and $b\in B$ we have $b l_{\xi}=b l_{\alpha}$. Since $z\in \overline{B}$, for each neighborhood $U$ of $z$ and $\xi\geq \alpha$ we have $Ul_\alpha\cap Ul_\xi\neq \varnothing$. Taking into account that $X$ is a Hausdorff semitopological semilattice, we conclude that $zl_\alpha=zl_\xi$ for all $\xi\geq \alpha$. It follows that $|zL|<\w_1$ and hence $z\in I_L$. 
\end{proof}

Similarly one can show the following.

\begin{lemma}\label{IL1}
Let $X$ be a countably tight semitopological semilattice and $L\subseteq X$ be a chain isomorphic to $(\w_1,\max)$ such that $I_L\neq \var$. Then $I_L$ is a closed ideal.    
\end{lemma}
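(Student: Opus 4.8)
The plan is to dualise the proof of Lemma~\ref{IL}, replacing $\min$ by $\max$ throughout. First I would fix an order isomorphism identifying $L$ with $(\w_1,\max)$, so that $L=\{l_\xi:\xi<\w_1\}$ with $l_\xi l_\eta=l_{\max(\xi,\eta)}$; equivalently $l_\xi\leq l_\eta$ in $X$ if and only if $\eta\leq\xi$. Since $I_L\neq\var$ is now assumed, Lemma~\ref{idealIL} at once gives that $I_L$ is an ideal, so only closedness remains. Note that, in contrast to Lemma~\ref{IL}, the hypothesis $I_L\neq\var$ is genuinely needed here: one computes $l_\eta L=\{l_\xi:\xi\geq\eta\}$, which is uncountable, so $L\cap I_L=\var$.

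The key intermediate observation is that $x\in I_L$ if and only if there is an ordinal $\alpha_x<\w_1$ with $xl_\xi=xl_{\alpha_x}$ for all $\xi\geq\alpha_x$. The ``if'' part is trivial, since then $xL=\{xl_\xi:\xi<\alpha_x\}\cup\{xl_{\alpha_x}\}$ is countable. For the ``only if'' part, from $(xl_\xi)(xl_\eta)=xl_{\max(\xi,\eta)}$ one sees that $\xi\mapsto xl_\xi$ is monotone, hence each of its fibres is a convex subset of $\w_1$; these fibres partition $\w_1$ into at most countably many pieces, and since $\w_1$ cannot be covered by countably many bounded subsets, one fibre is cofinal, hence a final segment $[\alpha_x,\w_1)$.

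Then I would reproduce the closure argument of Lemma~\ref{IL} almost verbatim. Let $z$ be an adherent point of $I_L$; by countable tightness choose a countable $B\subseteq I_L$ with $z\in\overline{B}$, and put $\alpha=\sup\{\alpha_b:b\in B\}<\w_1$, so that $bl_\xi=bl_\alpha$ for every $b\in B$ and every $\xi\geq\alpha$. For a neighbourhood $U$ of $z$ and $\xi\geq\alpha$, picking $b\in U\cap B$ shows $Ul_\alpha\cap Ul_\xi\neq\var$; since $X$ is a Hausdorff semitopological semilattice, the translations by $l_\alpha$ and by $l_\xi$ are continuous, and this forces $zl_\alpha=zl_\xi$ for all $\xi\geq\alpha$. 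Hence $zL$ is countable and $z\in I_L$, so $I_L$ is closed.

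I do not expect a real obstacle: the argument for Lemma~\ref{IL} dualises directly, and the only point that genuinely differs is the need to invoke $I_L\neq\var$. The one step warranting care is the eventual-stabilisation claim of the second paragraph, which rests on the regularity of $\w_1$ together with the monotonicity of $\xi\mapsto xl_\xi$, both of which are insensitive to whether the operation is $\min$ or $\max$.
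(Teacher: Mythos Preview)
Your proposal is correct and matches the paper's approach exactly: the paper does not give a separate proof of Lemma~\ref{IL1} but simply writes ``Similarly one can show the following'' after Lemma~\ref{IL}, and your dualisation is precisely what is intended. Your explicit justification of the eventual-stabilisation claim (via monotonicity of $\xi\mapsto xl_\xi$, convexity of fibres, and regularity of $\w_1$) fills in a step that the paper leaves implicit even in the proof of Lemma~\ref{IL}, and your remark that $L\cap I_L=\var$ in the $(\w_1,\max)$ case correctly identifies why the hypothesis $I_L\neq\var$ must be assumed here rather than derived.
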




Note that the order topologies on semilattices $(\w_1,\max)$ and $(\w_1,\min)$ coincide. 
The following result follows from~\cite[Theorem 1]{BBK}

\begin{theorem}[Banakh, Bonnet, Kubi\'s]\label{BBK}
Let $X$ be a separable topological semilattice that contains a subspace $L$ homeomorphic to $\w_1$ endowed with the order topology. Then $|X\setminus L|>\w$.    
\end{theorem}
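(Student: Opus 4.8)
The plan is to argue by contradiction: assume $|X\setminus L|\le\w$. Since $X$ is separable, fix a countable dense set $D\subseteq X$ and, after adjoining to it the (countably many) points of $X\setminus L$, assume $X\setminus L\subseteq D$. Identify $L$ with $\w_1$ carrying its order topology and write $L=\{l_\xi:\xi<\w_1\}$; then $D\cap L$ is a countable subset of $L$, hence bounded, say $D\cap L\subseteq\{l_\xi:\xi<\gamma\}$ for some $\gamma<\w_1$. Put $Z:=X\setminus L$ and $L':=\{l_\xi:\gamma<\xi<\w_1\}$; the set $L'$ is again homeomorphic to $\w_1$ and is disjoint from $D$. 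Since the final segment $(\gamma,\w_1)$ is open in $\w_1$, there is an open $V\subseteq X$ with $V\cap L=L'$, and this $V$ meets $D$ only inside $Z$; intersecting an arbitrary neighbourhood of a point $l_\eta\in L'$ with $V$ then shows that every neighbourhood of $l_\eta$ meets $Z$. Hence $L'\subseteq\overline Z$, i.e. an uncountable copy of $\w_1$ lies in the closure of the \emph{countable} set $Z$.

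Next I would extract the semilattice-theoretic constraint hidden in this. Because $X$ is a \emph{topological} semilattice, for every $x\in X$ the shift $t\mapsto xt$ is continuous, so $x\in\overline A$ implies $x=x\cdot x\in\overline{xA}$, while $xA\subseteq{\downarrow}x$. Arguing as in the proof of Lemma~\ref{sup}, in the Hausdorff semitopological semilattice $X$ a point lying in the closure of a subset of its own lower set is the supremum of that subset; hence $x=\sup(xA)$. Taking $A=Z$ (legitimate since $l_\eta\in\overline Z$ for $\eta>\gamma$) gives
$$l_\eta=\sup(l_\eta Z),\qquad l_\eta Z=\{l_\eta z:z\in Z\}\subseteq{\downarrow}l_\eta\ \text{ countable},$$
for every $\eta$ with $\gamma<\eta<\w_1$, and for each fixed $z\in Z$ the map $\eta\mapsto l_\eta z$ is continuous from $L'$ (the order topology) into the closed set ${\downarrow}z$. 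Thus the uncountable copy $L'$ of $\w_1$ is assembled, point by point, from countable suprema of the ``$z$-traces'' $\{l_\eta z:\gamma<\eta<\w_1,\ z\in Z\}$.

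The core of the argument — and the step I expect to be the main obstacle — is to turn this into a contradiction with the uncountable cofinality of $\w_1$: since $\w_1$ is not separable, no co-countable subset of it can lie in the closure of a single countable set, and the $z$-traces must collapse this. To set it up cleanly I would first pass to the closed subsemilattice $\overline{\langle Z\rangle}$ generated by $Z$ (it still contains $L'$, it carries the countable dense subsemilattice $\langle Z\rangle$, and $\overline{\langle Z\rangle}\setminus L'$ is countable), reducing to a separable topological semilattice with a \emph{countable dense subsemilattice} and a co-countable subspace homeomorphic to $\w_1$. The genuine difficulty is then to show that each trace map $\eta\mapsto l_\eta z$ has countable range — a continuous image of $\w_1$ need not be eventually constant in an arbitrary space, so the semilattice structure (the commutativity of idempotents and closedness of the sets ${\downarrow}x$) must be used in an essential way; this is precisely the cofinality bookkeeping carried out for chains in Lemmas~\ref{sup},~\ref{+1} and~\ref{noproofi}, to which one would reduce, or deduce directly. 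Alternatively, and far more economically, the assertion is an immediate consequence of the structure theory of separable topological semilattices developed by Banakh, Bonnet and Kubi\'s, and can simply be quoted from~\cite[Theorem 1]{BBK}.
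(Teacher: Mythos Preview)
The paper does not prove Theorem~\ref{BBK}; it merely quotes it from~\cite[Theorem 1]{BBK}. Your final sentence is therefore exactly what the paper does, and nothing more is needed here.

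Your independent attempt, however, has a genuine gap. The reduction to $L'\subseteq\overline Z$ with $Z=X\setminus L$ countable is correct, as is the derivation of $l_\eta=\sup(l_\eta Z)$ for $\eta>\gamma$ (your ``as in the proof of Lemma~\ref{sup}'' step is fine). But from there you only \emph{promise} a contradiction. The claim that the remaining work is ``precisely the cofinality bookkeeping'' of Lemmas~\ref{sup},~\ref{+1},~\ref{noproofi} is unjustified: those lemmas treat \emph{chains in the semilattice order} inside \emph{countably compact} or countably tight semilattices, whereas here $X$ is only assumed separable and $L$ is merely a \emph{subspace} homeomorphic to $\w_1$, with no assumed relation between the $\w_1$-order on $L$ and the semilattice order of $X$. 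Nothing in your outline shows that the trace maps $\eta\mapsto l_\eta z$ have countable range, nor explains why that would yield a contradiction even if true. That missing step is the substance of the theorem, not bookkeeping.
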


\begin{lemma}\label{good}
Let $X$ be a Nyikos topological semilattice and $L\subseteq X$ be a chain isomorphic to $(\w_1,\min)$. Then there exists $x\in L$ such that $x\in \overline{X\setminus I_L}$.  
\end{lemma}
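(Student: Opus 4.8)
The plan is to argue by contradiction: suppose that $x\notin\overline{X\setminus I_L}$ for every $x\in L$. By the definition of closure, this means that each $x\in L$ has an open neighborhood disjoint from $X\setminus I_L$, i.e. contained in $I_L$; hence $L\subseteq V$, where $V:=\operatorname{int}(I_L)$ is open in $X$. The idea is then to use separability to control, uniformly over a countable dense subset of $V$, how quickly products with $L$ stabilize, and finally to transfer this uniform control to the points of $L$ themselves, contradicting the fact that $L$ does not stabilize.

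Concretely, fix a countable dense subset $D$ of $X$. Since $V$ is open and contains the nonempty set $L$, the set $D\cap V$ is dense in $V$, so $L\subseteq V\subseteq\overline{D\cap V}$. Write $L=\{l_\xi:\xi<\w_1\}$ with $l_\xi\leq l_\eta$ if and only if $\xi\leq\eta$. For each $d\in D\cap V\subseteq I_L$ the chain $dL$ is countable; since the map $\xi\mapsto dl_\xi$ is non-decreasing and $\operatorname{cf}(\w_1)=\w_1$, it must stabilize, that is, there is $\xi_d<\w_1$ with $dl_\xi=dl_{\xi_d}$ for all $\xi\geq\xi_d$. Because $D\cap V$ is countable, $\alpha:=\sup\{\xi_d:d\in D\cap V\}<\w_1$, and consequently $dl_\xi=dl_\alpha$ for all $d\in D\cap V$ and all $\xi\geq\alpha$.

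To finish, I would upgrade this uniform stabilization to the closure. Fix $\xi\geq\alpha$ and suppose that $xl_\xi\neq xl_\alpha$ for some $x\in\overline{D\cap V}$. Choose disjoint open sets $O_1\ni xl_\xi$ and $O_2\ni xl_\alpha$; since $X$ is semitopological, the right translations $r_{l_\xi}$ and $r_{l_\alpha}$ are continuous, so $W:=r_{l_\xi}^{-1}(O_1)\cap r_{l_\alpha}^{-1}(O_2)$ is an open neighborhood of $x$. Picking $d\in W\cap D\cap V$ gives $dl_\xi\in O_1$ and $dl_\alpha\in O_2$, contradicting $dl_\xi=dl_\alpha$. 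Hence $xl_\xi=xl_\alpha$ for every $x\in\overline{D\cap V}$ and every $\xi\geq\alpha$. Applying this with $x=l_\beta$ for any $\beta$ with $\alpha<\beta<\w_1$ (which exists since $\alpha<\w_1$, and $l_\beta\in L\subseteq\overline{D\cap V}$) yields $l_\beta=l_\beta l_\beta=l_\beta l_\alpha=l_\alpha$, which is impossible because $\xi\mapsto l_\xi$ is injective. This contradiction proves the lemma.

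The only mildly delicate points are the stabilization claim in the second paragraph (this is precisely where the uncountable cofinality of $\w_1$ enters) and the Hausdorff separation step transferring uniform stabilization from $D\cap V$ to $\overline{D\cap V}$, which parallels the computation in the proof of Lemma~\ref{IL}. Note that neither first-countability nor countable compactness nor regularity of $X$ is actually used here — only that $X$ is a separable topological semilattice.
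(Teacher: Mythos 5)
Your argument is correct, and it is genuinely different from (and more elementary than) the paper's proof. You start from the same contradiction hypothesis ($L\subseteq\operatorname{int}(I_L)=:V$), but then you run a uniform-stabilization plus dense-equalizer argument: each $d$ in the countable set $D\cap V\subseteq I_L$ satisfies $dl_\xi=dl_{\xi_d}$ for $\xi\geq\xi_d$ (monotonicity of $\xi\mapsto dl_\xi$ together with the regularity of $\w_1$), the bound $\alpha=\sup_d\xi_d$ is countable, and then the two continuous translations $r_{l_\xi}$ and $r_{l_\alpha}$ agree on $D\cap V$, hence on $\overline{D\cap V}\supseteq V\supseteq L$, which is absurd at $x=l_\beta$ for any $\beta>\alpha$ since $l_\beta l_\beta=l_\beta\neq l_\alpha=l_\beta l_\alpha$. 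All the individual steps check out: $V\subseteq\overline{D\cap V}$ by separability, the stabilization claim is exactly the convex-fiber/cofinality argument, and the transfer to the closure is the same Hausdorff-plus-separate-continuity device used in the paper's proof of Lemma~\ref{IL}. The paper instead passes to $\overline{L}$ via Lemma~\ref{noproofi} (which needs countable compactness and countable tightness), builds the subsemilattice generated by $\overline{L}\cup(D\cap U)$, and derives a contradiction with the Banakh--Bonnet--Kubi\'s Theorem~\ref{BBK}; your route avoids both of these ingredients entirely and, as you observe, establishes the formally stronger statement for any separable Hausdorff semitopological semilattice, whereas the paper's argument buys nothing extra for this particular lemma beyond what its heavier tools already provide.
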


\begin{proof}
Let $L=\{l_{\alpha}:\alpha<\w_1\}\subseteq X$, where $l_{\alpha}\leq l_{\beta}$ if and only if $\alpha\leq \beta$. It is clear that $L\subseteq I_L$. 
Fix any $x\in I_L$. 
Since $xL$ is countable, there exists $\delta<\w_1$ such that $xl_{\xi}=xl_{\delta}$ for each $\xi\geq \delta$. 
By Lemma~\ref{noproofi}, the chain $\overline{L}$ is topologically isomorphic to $(\w_1,\min)$ equipped with the order topology, and contains $L$ as a cofinal subset. Hence $xl_{\delta}=xa$ for each $a\in \overline L\cap {\uparrow}l_{\delta}$, implying that the set $x\overline L$ is countable. It follows that $I_L=I_{\overline{L}}$. Since $X$ is separable, it contains a countable dense subsemilattice $D$. Seeking a contradiction, assume that there exists an open set $U\supseteq L$ such that $U\subseteq  I_L$.
Consider the subsemilattice $Y$ of $X$ generated by $\overline{L}\cup (D\cap U)$.
Since $U\subseteq I_L=I_{\overline{L}}$ we obtain that the set 
$$D'=Y\setminus \overline{L}\subseteq (D\cap I_L)\cup\bigcup_{x\in D\cap I_L}x\overline{L}$$ 
is countable.  Since $D$ is dense in $X$ and $U\supseteq L$ is open, we get that $L\subseteq \overline{U\cap D}$. It follows that $\overline{L}\subseteq \overline{U\cap D}$. Thus the countable set $D'$ is dense in the topological semilattice $Y$, which contradicts Theorem~\ref{BBK}.  
\end{proof}

\begin{proposition}\label{w1}
Let $X$ be a locally compact Nyikos topological semilattice. Then $X$ contains no chain isomorphic to $(\w_1,\min)$.      
\end{proposition}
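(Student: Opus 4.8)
The plan is to argue by contradiction, assuming $L\subseteq X$ is a chain isomorphic to $(\w_1,\min)$. First I would invoke Lemma~\ref{noproofi}: since a locally compact Nyikos space is countably compact and (being first-countable) countably tight, $\overline{L}$ is topologically isomorphic to $(\w_1,\min)$ with the order topology and $L$ is cofinal in $\overline{L}$; in particular, writing $\overline{L}=\{l_\xi:\xi<\w_1\}$ with $l_\xi\le l_\eta\iff\xi\le\eta$, every uncountable subset of $\overline L$ has supremum equal to the "missing top" only in the ideal-completion sense — more precisely, for every neighborhood $U$ of $l_\xi$ (for $\xi$ limit) the set $\{l_\eta:\eta<\xi\}\setminus U$ is bounded below $\xi$. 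The key structural input is then Lemma~\ref{good}: there exists $x\in L$ (and hence a final segment $L\cap{\uparrow}x$, which is again isomorphic to $(\w_1,\min)$) with $x\in\overline{X\setminus I_L}$. Since $I_L$ is a closed ideal by Lemma~\ref{IL}, the set $X\setminus I_L$ is open, so $x$ is an adherent point of the nonempty open set $X\setminus I_L$.

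Next I would exploit local compactness. Replacing $L$ by its cofinal final segment ${\uparrow}x\cap L$ if necessary, we may assume the point $x$ from Lemma~\ref{good} is (without loss) the minimum $l_0$ of $L$, and that $l_0\in\overline{X\setminus I_L}$. Choose a compact neighborhood $K$ of $l_0$. Since $l_0\in\overline{X\setminus I_L}$ and $X\setminus I_L$ is open, pick a point $w\in K\cap(X\setminus I_L)$, so $|wL|=\w_1$. The element $w$ multiplies $L$ to an uncountable chain $wL=\{wl_\xi:\xi<\w_1\}$ inside $X$; since $l_0\le l_\xi$ we have $wl_0\le wl_\xi$, so $wl_0$ is a lower bound, and the map $\xi\mapsto wl_\xi$ need not be injective, but because $|wL|=\w_1$ the chain $wL$ has order type $\w_1$ after collapsing repetitions, i.e. by Lemma~\ref{folknew}(1) the homomorphic image of $(\w_1,\min)$ under $l_\xi\mapsto wl_\xi$ is isomorphic to $(\w_1,\min)$. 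Thus $wL$ is another copy of $(\w_1,\min)$ in $X$, and it lies in the product $wK\cdot\ldots$ — more usefully, I want to place $wL$ inside a compact set. This is where the main work lies: I would argue that for a suitable choice of $w\in K$ the whole chain $wL$, or at least an uncountable subchain, is contained in a compact subset of $X$, so that $\overline{wL}$ is compact.

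Once $\overline{wL}$ is compact, Lemma~\ref{conv} gives $z=\sup(wL)\in\overline{wL}$ such that every neighborhood of $z$ omits only countably many points of $wL$; equivalently the sequence-like uncountable chain $wL$ "converges" to $z$ in the strong sense of Lemma~\ref{conv}. But $X$ is countably tight, so Lemma~\ref{base} applies directly: a countably tight semitopological semilattice cannot contain a chain isomorphic to $(\w_1,\min)$ with compact closure. Hence $\overline{wL}$ is not compact — contradicting the compactness established in the previous step. Therefore $X$ contains no chain isomorphic to $(\w_1,\min)$.

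The main obstacle is the second paragraph: squeezing (a cofinal piece of) $wL$ into a compact set. The natural route is to use that $I_L=I_{\overline L}$ (as observed inside the proof of Lemma~\ref{good}) together with the identity $wl_\xi=wl_\delta$ failing cofinally to control where the points $wl_\xi$ accumulate; concretely, for $w\in X\setminus I_L$ inside the compact neighborhood $K$ of $l_0$, one should show the chain $\{wl_\xi:\xi<\w_1\}$ has all but countably many terms inside $wK$, using that $wl_\xi\le wl_0$ (wait — the inequality goes $wl_0\le wl_\xi$, so instead one uses $wl_\xi=w l_0 l_\xi\in w l_0 L\subseteq$ a translate), and that $wl_0\in wK$ is a fixed point while left-translation by $wl_0$ is continuous, so $wl_0 L\subseteq wl_0\overline{L}$ and the latter is the continuous image of the compact... no: $\overline L$ is not compact. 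The correct fix is to instead pass to $\overline{L}\cap K'$ for a compact neighborhood $K'$ of a well-chosen limit point, or to use that some $l_\beta$ has a compact neighborhood meeting $X\setminus I_L$ and then run the argument with the cofinal tail ${\uparrow}l_\beta\cap L$ mapped by an element of that neighborhood; I would expect to need Theorem~\ref{H-closed} or the chain-compactness criterion (Theorem~\ref{BBchain}) to finally force the compactness of the relevant closure, reaching the contradiction with Lemma~\ref{base}.
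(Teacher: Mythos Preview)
Your overall architecture is right: assume a chain $L\cong(\omega_1,\min)$, use Lemma~\ref{good} to find a point of $L$ adherent to $X\setminus I_L$, pick nearby a $w\notin I_L$, observe via Lemma~\ref{folknew} that $wL$ (or a tail) is again a copy of $(\omega_1,\min)$, and derive a contradiction from Lemma~\ref{base} once you know its closure is compact. The endgame is exactly the paper's.

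The genuine gap is precisely where you say ``this is where the main work lies'': you never find a mechanism to force a cofinal piece of $wL$ into a compact set, and the attempts you sketch (translates $wK$, $wl_0L$, passing to $\overline L\cap K'$, or invoking Theorem~\ref{H-closed}) do not work. There is no reason, working inside $X$ alone, that the products $wl_\xi$ should stay in any prescribed compact region --- the chain $\overline L$ is not compact, and left-translation by a fixed element does not help because nothing anchors the tail of $L$.

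The missing idea is to step outside $X$. Since $X\times X$ is countably compact (hence pseudocompact), Theorem~\ref{BDG} makes $\beta X$ a compact topological semilattice. Now Lemma~\ref{conv} applies in $\beta X$: the supremum $z=\sup L$ exists in $\beta X$, and every neighborhood of $z$ contains all but countably many $l_\xi$. The crucial algebraic fact is $l_\alpha z=l_\alpha\in X$: the product of your chosen point $l_\alpha$ with the ``missing top'' $z$ lands back at $l_\alpha$, which has a compact neighborhood $W$ in $X$. Continuity of multiplication in $\beta X$ then gives neighborhoods $U\ni l_\alpha$ and $V\ni z$ with $UV\subseteq W$. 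Choosing $d_U\in U\setminus I_L$ (from Lemma~\ref{good}) and the cofinal tail $L'\subseteq V$ (from Lemma~\ref{conv}), you get $d_UL'\subseteq W$, hence $\overline{d_UL'}\subseteq\operatorname{cl}_X(W)$ is compact --- and Lemma~\ref{base} finishes. The passage to $\beta X$ is what supplies the anchor point $z$ that your argument was groping for.
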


\begin{proof}
To derive a contradiction assume that $X$ contains a chain $L=\{l_{\alpha}:\alpha<\w_1\}$ such that $l_{\alpha}\leq l_{\beta}$ if and only if $\alpha\leq \beta$. By \cite[Corollary 3.10.15]{Eng}, $X{\times}X$ is countable compact and thus pseudocompact. Theorem~\ref{BDG} implies that $\beta X$ is a compact topological semigroup. Since $X$ is dense in $\beta X$, we obtain that $\beta X$ is a semilattice. By Lemma~\ref{conv}, there exists $z=\sup L\in\beta X$.  
Lemma~\ref{good} implies that there exists $l_{\alpha}\in L$ such that for each neighborhood $U$ of $l_{\alpha}$ the set $U\setminus I_L$ contains an element $d_U$.
Since $X$ is locally compact, there exists a neighborhood $W$ of $l_{\alpha}$ such that $\cl_X(W)$ is compact.
 The continuity of the semilattice operation in $\beta X$ yields neighborhoods $U$ of $l_{\alpha}$ and $V$ of $z$ such that $UV\subseteq W$.
 By Lemma~\ref{conv}, there exists an ordinal $\delta<\w_1$ such that 
 $L'=\{l_{\xi}:\delta<\xi <\w_1\}\subseteq V$. Note that $d_UL'\subseteq W$, $L'$ is isomorphic to $(\w_1,\min)$ and $|d_UL'|=|d_UL|=\w_1$, by the choice of $d_U$. Since for each $a\in X$ the map $\phi_a: X\rightarrow X$, $\phi_a(x)=ax$ is a homomorphism, Lemma~\ref{folknew} implies that the chain $d_UL'$ is isomorphic to $(\beta,\min)$ for some ordinal $\beta\leq \w_1$. Taking into account that $|d_UL'|=\w_1$, we get that $d_UL'$ is isomorphic to $(\w_1,\min)$. Since $\cl_X(W)$ is compact, the chain $\cl_X(d_UL')\subseteq \cl_X(W)$ is compact, which contradicts Lemma~\ref{base}. 
\end{proof}

\begin{proposition}\label{w2}
Let $X$ be a locally compact Nyikos topological semilattice. Then $X$ contains no chain isomorphic to $(\w_1,\max)$.      
\end{proposition}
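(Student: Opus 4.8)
The plan is to run the dual of the argument used for Proposition~\ref{w1}, replacing $(\w_1,\min)$ by $(\w_1,\max)$ throughout, but I must be careful about where the proof of Proposition~\ref{w1} uses asymmetric ingredients, because not every lemma from Section~\ref{secchain} was stated in both forms. So I would argue by contradiction: assume $X$ contains a chain $L=\{l_\alpha:\alpha<\w_1\}$ with $l_\alpha\le l_\beta$ iff $\alpha\le\beta$, now with the $(\w_1,\max)$ structure. As in Proposition~\ref{w1}, by \cite[Corollary 3.10.15]{Eng} $X\times X$ is countably compact, hence pseudocompact, so by Theorem~\ref{BDG} the Stone--\v{C}ech compactification $\beta X$ is a compact topological semigroup; since $X$ is dense, $\beta X$ is a semilattice.

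The first place where I must supply a dual ingredient is the convergence/infimum statement. For $(\w_1,\max)$ the relevant fact is Lemma~\ref{conv1}: since $\overline{L}^{\,\beta X}=\cl_{\beta X}(L)$ is compact, there is $z=\inf L\in\cl_{\beta X}(L)$ such that $\cl_{\beta X}(L)\setminus U$ is countable for every neighborhood $U$ of $z$; equivalently, a final segment of $L$ lies in any neighborhood of $z$. Next I need the analogue of Lemma~\ref{good}: there exists $l_\alpha\in L$ with $l_\alpha\in\overline{X\setminus I_L}$. The proof of Lemma~\ref{good} used Lemma~\ref{noproofi} (the $(\w_1,\min)$ version of ``$\overline{L}$ is order-topologically $\w_1$''), the identity $I_L=I_{\overline L}$, separability of $X$, and Theorem~\ref{BBK} (which only refers to the abstract order topology on $\w_1$ and is therefore direction-neutral). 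I would first establish the dual of Lemma~\ref{noproofi} for $(\w_1,\max)$ — this is routine, since the excerpt already provides the dual Lemmas~\ref{conv1} and~\ref{IL1} and the order-topological rigidity argument is symmetric — and also check that $I_L\ne\var$ in the $\max$ case (indeed $L\subseteq I_L$: for $l_\xi\in L$ we have $l_\xi L=\{l_\eta:\eta\ge\xi\}$ which has cardinality $\w_1$; hmm, that is the wrong bound — I need $|l_\xi L|<|L|$, but $|l_\xi L|=\w_1=|L|$, so in fact $L\not\subseteq I_L$ here, and one must instead observe that $I_L$ is nonempty for another reason, e.g.\ because $\inf$-type elements of $\overline L$ absorb cofinal pieces of $L$). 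This asymmetry is exactly the subtle point: in the $\min$ case $I_L$ trivially contains $L$, whereas in the $\max$ case nonemptiness of $I_L$ is a genuine hypothesis in Lemma~\ref{IL1}, so I must verify it holds here, presumably by locating an element $x$ (e.g.\ the point $z=\inf L$ pulled back into $X$, or an element below some $l_\alpha$) with $|xL|<\w_1$.

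Granting the dual of Lemma~\ref{good}, the endgame mirrors Proposition~\ref{w1}: pick $l_\alpha\in\overline{X\setminus I_L}$, use local compactness to get a neighborhood $W$ of $l_\alpha$ with $\cl_X(W)$ compact, use continuity of the operation in $\beta X$ at $(l_\alpha,z)$ to get neighborhoods $U\ni l_\alpha$, $V\ni z$ with $UV\subseteq W$, and use Lemma~\ref{conv1} to find a final segment $L'=\{l_\xi:\delta<\xi<\w_1\}\subseteq V$, which is again isomorphic to $(\w_1,\max)$. Choosing $d_U\in U\setminus I_L$ gives $|d_UL'|=|d_UL|=\w_1$ and $d_UL'\subseteq W$. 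Since $\phi_{d_U}$ is a semilattice homomorphism, Lemma~\ref{folknew}(2) forces $d_UL'$ to be isomorphic to $(\beta,\max)$ for some $\beta\le\w_1$, hence to $(\w_1,\max)$ by the cardinality count. Then $\cl_X(d_UL')\subseteq\cl_X(W)$ is a compact chain, contradicting Lemma~\ref{base} (which is stated for both $(\w_1,\min)$ and $(\w_1,\max)$). The main obstacle is therefore not the endgame but the bookkeeping in the middle: checking that the $\max$-versions of Lemma~\ref{noproofi} and Lemma~\ref{good} go through, and in particular confirming $I_L\ne\var$ so that Lemma~\ref{IL1} applies — a point where the $\min$ and $\max$ cases genuinely differ.
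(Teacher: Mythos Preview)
There is a genuine gap: the endgame does \emph{not} mirror Proposition~\ref{w1}, because the asymmetry runs deeper than bookkeeping. Since the semilattice operation is meet with respect to the natural order, a chain isomorphic to $(\w_1,\max)$ is a strictly \emph{decreasing} $\w_1$-chain (your labeling $l_\alpha\le l_\beta\Leftrightarrow\alpha\le\beta$ is reversed; the paper writes $\alpha\ge\beta$), so $z=\inf L$ lies below every $l_\alpha$ and hence $l_\alpha z=z$, not $l_\alpha$. Your step ``get $U\ni l_\alpha$, $V\ni z$ with $UV\subseteq W$'' for $W$ a compact-closure neighborhood of $l_\alpha$ therefore fails: continuity at $(l_\alpha,z)$ lands $UV$ near $z$, not near $l_\alpha$, and $z$ may well lie in $\beta X\setminus X$, so local compactness of $X$ is simply unavailable at the point where the product lands. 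Relatedly, your ``dual of Lemma~\ref{good}'' is vacuous: as you yourself compute, $L\subseteq X\setminus I_L$, so every $l_\alpha$ already lies in $\overline{X\setminus I_L}$, and this buys nothing.

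The paper's proof takes a substantially different route. Nonemptiness of $I_L$ comes from Lemma~\ref{0}: the minimum of $X$ lies in $I_L$. The compactness trap you describe is then sprung not at an $l_\alpha$ but at a hypothetical boundary point $y\in I_L\cap\overline{X\setminus I_L}$: for such $y$ one has $yz=yl_\alpha\in X$ for all large $\alpha$, so local compactness at $yz$ is legitimate, and the $d_U$-argument shows no such $y$ exists --- that is, $I_L$ is open, hence clopen (Claim~\ref{clopen}). This is still only half the proof. One then shows (via the Rees quotient $\beta X/\cl_{\beta X}(I_L)$, Claim~\ref{cool}) that $\beta X\setminus\cl_{\beta X}(I_L)=\bigcup_{f\in F}{\uparrow}f$ for some \emph{finite} $F\subseteq X\setminus I_L$; since $z$ lies in this set, some $f\in F$ satisfies $f\le z$, whence $fL=f(zL)=\{fz\}=\{f\}$ forces $f\in I_L$, the final contradiction. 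Your plan is missing this entire second stage.
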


\begin{proof}
Seeking a contradiction, assume that a locally compact Nyikos semilattice $X$ contains a chain $L=\{l_{\alpha}:\alpha<\w_1\}$ such that $l_{\alpha}\leq l_{\beta}$ if and only if $\alpha\geq \beta$. By Theorem~\ref{BDG}, $\beta X$ is a compact topological semigroup. Since $X$ is dense in $\beta X$, we obtain that $\beta X$ is a semilattice. By Lemma~\ref{conv1}, there exists $z=\inf L\in \beta X\setminus X$.
Lemma~\ref{0} implies that the set $I_L=\{x\in X: |xL|<\w_1\}$ is not empty, as it contains the minimum of $X$. 

\begin{claim}\label{clopen}
The set $I_L$ is clopen in $X$.    
\end{claim}

\begin{proof}
By Lemma~\ref{IL1}, $I_L$ is closed. To derive a contradiction, assume that $I_L$ is not open. Then there exists $y\in I_L$ such that each neighborhood $U$ of $y$ contains an element $d_U\in X\setminus I_L$.   
Let us show that $yz\in X$. Since $y\in I_L$, we have $|yL|<\w_1$. Then there exists an uncountable and thus cofinal subset $A\subseteq \w_1$ such that $yl_\xi=yl_\delta$ for all $\xi,\delta\in A$. Let $\alpha=\min A$. It is routine to check that $yl_{\xi}=yl_{\alpha}$ for each $\xi\geq \alpha$. By Lemma~\ref{conv1}, for each neighborhood $V$ of $z$ there exists $\delta_V<\w_1$ such that $\{l_\xi: \xi\geq \delta_V\}\subseteq V$. Then $yl_{\alpha}\in yV$ for each neighborhood $V\subseteq \beta X$ of $z$. Since $\beta X$ is a Hausdorff topological semilattice, we get $yz=yl_{\alpha}\in X$. Since $X$ is locally compact, there is a neighborhood $W\subseteq X$ of $yz$ such that $\cl_X(W)$ is compact. The continuity of the semilattice operation in $\beta X$ yields neighborhoods $U$ of $y$ and $V$ of $z$ such that $UV\subseteq W$. 
Let $L'=\{l_{\alpha}:\alpha\geq \delta_V\}\subseteq V$. Note that $d_UL'\subseteq W$. By the choice of $d_U$, $|d_UL|=\w_1$, which implies $|d_UL'|=\w_1$. As left shifts in $\beta X$ are homomorphisms and $L'$ is order isomorphic to $(\w_1,\max)$, Lemma~\ref{folknew}(2) implies that $d_U L'$ is also order isomorphic to $(\w_1,\max)$. 
Since $\cl_X(W)$ is compact, the chain $\cl_X(d_UL')\subseteq \cl_X(W)$ is compact, which contradicts Lemma~\ref{base}.
\end{proof}

By~\cite[Corollary 3.6.5]{Eng}, the set $\cl_{\beta X}(I_L)$ is clopen.

\begin{claim}\label{cool}
There exists a finite set $F\subseteq X\setminus I_L$ such that $\beta X\setminus\cl_{\beta X}(I_L)=\bigcup_{x\in F}{\uparrow}x$.
\end{claim}

\begin{proof}
Let us first check that $\cl_{\beta X}(I_L)$ is an ideal in $\beta X$. It suffices to show that $ab\in \cl_{\beta X}(I_L)$ for any $a\in \cl_{\beta X}(I_L)$ and $b\in\beta X$. Fix a neighborhood $W$ of $ab$. By the continuity of the semigroup operation in $\beta X$, there exist neighborhoods $U$ of $a$ and $V$ of $b$ such that $UV\subseteq W$. Then there exist $c\in U\cap I_L$ and $d\in X\cap V$ such that $cd\in W\cap I_L$. Thus $ab\in \cl_{\beta X}(I_L)$, as required. 
Consider the Rees quotient semilattice $S=\beta X/{\cl_{\beta X}(I_L)}$ which is obtained by contracting the clopen ideal $\cl_{\beta X}(I_L)$ to a point denoted by $0$. Obviously, $S$ is a compact topological semilattice, $0=\inf S$ and $0$ is isolated in $S$. Further we agree to identify the sets $S\setminus\{0\}$ and $\beta X\setminus \cl_{\beta X}(I_L)$. Observe that the set $\beta X\setminus\cl_{\beta X}(I_L)$ is nonempty, as it contains $L$. Let $\mathfrak C$ be the set of all maximal chains in $S\setminus\{0\}$. By the compactness of $S\setminus\{0\}$, each $C\in\mathfrak C$ contains its minimum. To derive a contradiction, assume that the set 
$$F=\{\min C: C\in\mathfrak C\}\subseteq \beta X\setminus\cl_{\beta X}(I_L)$$ is infinite. It is easy to see that $ab=0$ for any distinct $a,b\in F$. So, $T=F\cup\{0\}$ is a subsemilattice of $S$. Since each chain in $T$ is finite, Theorem~\ref{H-closed} implies that $T$ is closed in $S$ and thus compact. Consider an accumulation point $y\in T$ of $F$. Observe that for each neighborhood $U$ of $y$ there exist distinct $a,b\in U\cap F$ witnessing that $0=ab\in UU$. Since $S$ is a Hausdorff topological semilattice, we get that $y=yy=0$. On the other hand, $y\neq 0$, as $0$ is an isolated point, a contradiction. So the set $F$ is finite.
At this point it is straightforward to check that $S\setminus\{0\}=\beta X\setminus\cl_{\beta X}(I_L)=\bigcup_{x\in F}{\uparrow}x$. 

It remains to check that $F\subseteq X$. Fix any $f\in F$. 
Observe that 
$$P:=S\setminus \big(\{0\}\cup\bigcup_{g\in F\setminus\{f\}}{\uparrow}g\big)=\{x\in{\uparrow}f: x\notin \bigcup_{g\in F\setminus\{f\}}{\uparrow}g\}$$ 
is an open subsemilattice of $S$ such that $\inf P=f\in P$. Then $P\cap X$ is an open subsemilattice of $X$. Taking into account that $X$ is separable, Lemma~\ref{0} yields that the semilattice $\cl_X(P\cap X)$ contains the smallest element $p$. Since $\cl_X(P\cap X)\subseteq \cl_{\beta X}(P)\subseteq {\uparrow}f$, we get $f\leq p$. Since $X$ is dense in $\beta X$ and $P$ is open, for every neighborhood $U$ of $f$ the set $U\cap P\cap X$ is not empty. Thus $p\in Up$ for each neighborhood $U$ of $f$. Since $S$ is a Hausdorff topological semilattice, $fp=p$, i.e. $p\leq f$. Hence $p=f$ and $f\in \cl_X(P\cap X)\subseteq X$.
\end{proof}

By Claim~\ref{clopen}, the sets $I_L$ and $X\setminus I_L$ are clopen. Corollary 3.6.2 from \cite{Eng} implies that $$\cl_{\beta X}(X\setminus I_L)\cap \cl_{\beta X}(I_L)=\varnothing.$$
Since $L\subseteq X\setminus I_L$, we obtain $$z\in \cl_{\beta X}(L)\subseteq  \cl_{\beta X}(X\setminus I_L)\subseteq \beta X\setminus \cl_{\beta X}(I_L).$$    
By Claim~\ref{cool}, there exists $f\in F\subseteq X\setminus I_L$ such that $f\leq z$. But then $fL=(fz)L=f(zL)=\{fz\}=\{f\}$. Hence $f\in I_L$, a contradiction. 
\end{proof}

\begin{theorem}\label{Nyikossemilattice}
A locally compact Nyikos topological semilattice is compact.    
\end{theorem}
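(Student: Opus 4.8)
The plan is to reduce the statement immediately to Theorem~\ref{semilattice}, using the two Propositions already proved in this section. The first step is the bookkeeping observation that a locally compact Hausdorff space is Tychonoff (in fact completely regular; see~\cite[Theorem 3.3.1]{Eng}). Hence a locally compact Nyikos topological semilattice $X$ is in particular a \emph{Tychonoff} Nyikos topological semilattice, and therefore all three equivalent conditions of Theorem~\ref{semilattice} are at our disposal for $X$.

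The second step is to verify condition (ii) of Theorem~\ref{semilattice} for $X$, namely that no chain in $X$ is isomorphic to $(\w_1,\min)$ or to $(\w_1,\max)$. The first half of this is precisely the conclusion of Proposition~\ref{w1}, and the second half is precisely the conclusion of Proposition~\ref{w2}. With condition (ii) established, the implication (ii)$\Rightarrow$(i) of Theorem~\ref{semilattice} yields that $X$ is compact, which completes the argument.

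There is essentially no obstacle in the present proof itself: all the work has already been carried out in Propositions~\ref{w1} and~\ref{w2} (whose proofs use Lemma~\ref{good}, Theorem~\ref{BBK}, and the Stone--\v{C}ech extension machinery of Theorem~\ref{BDG}) and in Theorem~\ref{semilattice} (which rests on chain-compactness, Theorem~\ref{H-closed}, and again Theorem~\ref{BDG}). The only substantive content of this step is checking the Tychonoff hypothesis needed to invoke Theorem~\ref{semilattice} and then assembling the pieces; accordingly the proof will be a short three-line deduction combining Propositions~\ref{w1}, \ref{w2} and Theorem~\ref{semilattice}.
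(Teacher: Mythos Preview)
Your proposal is correct and follows exactly the paper's approach: invoke Propositions~\ref{w1} and~\ref{w2} to rule out chains isomorphic to $(\w_1,\min)$ and $(\w_1,\max)$, then apply Theorem~\ref{semilattice}. Your explicit check that local compactness upgrades the Nyikos hypothesis from regular to Tychonoff (so that Theorem~\ref{semilattice} applies) is a detail the paper leaves implicit, but otherwise the arguments are identical.
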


\begin{proof}
Propositions~\ref{w1} and~\ref{w2} imply that each locally compact Nyikos topological semilattice $X$ contains no isomorphic copies of $(\w_1,\min)$ or $(\omega_1,\max)$. By Theorem~\ref{semilattice}, $X$ is compact.   
\end{proof}

\section{Proof of the main result and final remarks}\label{proof of the main result}

A partially ordered space $(X,\leq)$ endowed with a topology is called a {\em pospace} if $\leq$ is a closed subset of $X{\times}X$.
Each inverse semigroup $S$ carries the natural partial order $\leq$ defined by $x\leq y$ if and only if $x=xx^{-1}y$. 
 The Green's relations $\mathcal L$, $\mathcal R$, $\mathcal H$ and $\mathcal D$ on an inverse semigroup $S$ are defined as follows:

\begin{enumerate}[label=(\roman*)]
    \item $(x,y)\in\mathcal L$ if and only if $x^{-1}x=y^{-1}y$;
    \item $(x,y)\in\mathcal R$ if and only if $xx^{-1}=yy^{-1}$;
    \item $\mathcal H=\mathcal L\cap\mathcal R$;
    \item $\mathcal D=\mathcal L\circ\mathcal R=\mathcal R\circ\mathcal L$.
\end{enumerate}
For more about Green's relations on inverse semigroups see~\cite[Chapter 3.2]{Lawson}.
The following two results can be considered folklore.

\begin{proposition}[Folklore]\label{minposet}
Every nonempty compact subset of a pospace contains a minimal element.    
\end{proposition}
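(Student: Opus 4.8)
The plan is to use a straightforward compactness argument combined with Zorn's lemma. Let $K$ be a nonempty compact subset of a pospace $(X,\leq)$. I want to find an element $m\in K$ that is minimal in $K$, i.e. no element of $K$ is strictly below $m$.

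First I would consider the family $\mathcal C$ of all nonempty chains in $(K,\leq)$, ordered by inclusion, and apply Zorn's lemma to obtain a maximal chain $C\subseteq K$. The key point is then to show that $C$ has an infimum lying in $K$, and that this infimum is the desired minimal element. To produce the infimum, I would look at the family of closed sets $\{{\downarrow}c\cap K : c\in C\}$, where ${\downarrow}c=\{x\in X: x\leq c\}$; since $\leq$ is closed in $X\times X$, each set ${\downarrow}c$ is closed in $X$ (being the preimage of the closed diagonal-type set under the map $x\mapsto (x,c)$), hence ${\downarrow}c\cap K$ is closed in the compact set $K$. Because $C$ is a chain, this family has the finite intersection property (the intersection of finitely many of these sets contains the minimum of the corresponding finite subchain, which lies in $K$ since $C\subseteq K$). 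By compactness of $K$, the total intersection $M=\bigcap_{c\in C}({\downarrow}c\cap K)$ is nonempty; pick any $m\in M$. Then $m\in K$ and $m\leq c$ for every $c\in C$.

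Next I would check that $m$ is minimal in $K$. Since $m$ is a lower bound in $K$ for the maximal chain $C$, the set $C\cup\{m\}$ is again a chain in $K$, so by maximality $m\in C$; in particular $m$ is the minimum of $C$. Now suppose toward a contradiction that some $x\in K$ satisfies $x<m$. Then $x$ is strictly below every element of $C$, so $C\cup\{x\}$ is a chain in $K$ properly containing $C$, contradicting maximality. Hence no element of $K$ is strictly below $m$, i.e. $m$ is a minimal element of $K$.

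The only mild subtlety — and the place I would be most careful — is the verification that ${\downarrow}c$ is closed; this is exactly where the pospace hypothesis is used, and it is immediate once one observes that ${\downarrow}c$ is the preimage of the closed set $\leq\,\subseteq X\times X$ under the continuous map $x\mapsto (x,c)$. Everything else is routine: the finite intersection property follows from $C$ being a chain contained in $K$, and the minimality of $m$ follows from the maximality of $C$. There is no real obstacle here; the statement is genuinely folklore, and the proof is a short combination of Zorn's lemma and the finite intersection characterization of compactness.
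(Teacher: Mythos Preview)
Your argument is correct and is exactly the standard one for this folklore fact: Hausdorff's maximal principle gives a maximal chain $C$ in $K$, the closed sets ${\downarrow}c\cap K$ have the finite intersection property, compactness yields a common point $m$, and maximality of $C$ forces $m$ to be minimal in $K$. The paper itself supplies no proof of this proposition (it is merely stated as folklore), so there is nothing to compare against; your write-up would serve perfectly well as the omitted justification.
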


\begin{proposition}[Folklore]\label{Dclass}
 If $S$ is a topological inverse semigroup, then each two $\mathcal H$-classes within the one $\mathcal D$-class are homeomorphic.   
\end{proposition}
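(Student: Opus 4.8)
The plan is to exhibit a homeomorphism between two $\mathcal H$-classes lying in a common $\mathcal D$-class as a composite of restrictions of a right translation and a left translation of $S$. Since $S$ is a topological semigroup, every translation $x\mapsto xa$ and $x\mapsto ax$ of $S$ is continuous, and --- as we shall see --- the map inverse to such a translation on a suitable $\mathcal H$-class is again a translation, hence also continuous. This is exactly what upgrades the classical bijections of Green's Lemma to homeomorphisms.

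In detail, for $w\in S$ write $H_w$, $L_w$ and $R_w$ for the $\mathcal H$-, $\mathcal L$- and $\mathcal R$-class of $w$, so that $H_w=L_w\cap R_w$. Let $H_x$ and $H_y$ be $\mathcal H$-classes with $x\,\mathcal D\,y$. Since $\mathcal D=\mathcal R\circ\mathcal L$, there is $z\in S$ with $x\,\mathcal R\,z$ and $z\,\mathcal L\,y$, i.e. $xx^{-1}=zz^{-1}$ and $z^{-1}z=y^{-1}y$. Put
$$s=x^{-1}z,\qquad s'=z^{-1}x,\qquad t=yz^{-1},\qquad t'=zy^{-1}.$$
Straightforward manipulations with the inverse-semigroup identities give $xs=z$, $zs'=x$, $tz=y$, $t'y=z$ and, more importantly, that the right translation $\rho_s\colon w\mapsto ws$ maps $H_x$ into $H_z$, the right translation $\rho_{s'}$ maps $H_z$ into $H_x$, and $\rho_{s'}\rho_s=\id_{H_x}$, $\rho_s\rho_{s'}=\id_{H_z}$; dually, the left translations $\lambda_t\colon v\mapsto tv$ and $\lambda_{t'}\colon v\mapsto t'v$ are mutually inverse bijections between $H_z$ and $H_y$. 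For instance, if $w\in H_x$ then $ww^{-1}=xx^{-1}=zz^{-1}$ and $w^{-1}w=x^{-1}x$, whence $(ws)(ws)^{-1}=wx^{-1}zz^{-1}xw^{-1}=zz^{-1}$ and $(ws)^{-1}(ws)=z^{-1}xw^{-1}wx^{-1}z=z^{-1}z$, so that $ws\in R_z\cap L_z=H_z$; the remaining identities are verified in the same fashion.

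It then remains to pass to topology. The maps $\rho_s,\rho_{s'},\lambda_t,\lambda_{t'}$ are continuous self-maps of $S$, being restrictions of the continuous multiplication of $S$; hence their corestrictions $\rho_s|_{H_x}\colon H_x\to H_z$, $\rho_{s'}|_{H_z}\colon H_z\to H_x$, $\lambda_t|_{H_z}\colon H_z\to H_y$ and $\lambda_{t'}|_{H_y}\colon H_y\to H_z$ are continuous. Since $\rho_{s'}|_{H_z}$ is the inverse of $\rho_s|_{H_x}$ and $\lambda_{t'}|_{H_y}$ is the inverse of $\lambda_t|_{H_z}$, both are homeomorphisms, and therefore so is the composite $\lambda_t|_{H_z}\circ\rho_s|_{H_x}\colon H_x\to H_y$, which proves the proposition. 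The one point requiring care is the combinatorial bookkeeping with Green's relations --- verifying that the four translations carry the relevant $\mathcal H$-classes onto one another and are mutually inverse there --- which is precisely Green's Lemma (see~\cite[Chapter~3.2]{Lawson}) specialised to inverse semigroups; note that only continuity of the multiplication, and not of inversion, is used.
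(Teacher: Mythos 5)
Your argument is correct and is essentially the paper's: the paper simply observes that the proposition follows from the proof of Green's Lemma (\cite[Lemma~2.2.3]{Howie}), whose bijections between $\mathcal H$-classes in a common $\mathcal D$-class are translations and hence continuous in a topological semigroup. You have merely written out those translating elements explicitly via the inverse-semigroup identities, so no further comment is needed.
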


In fact, Proposition~\ref{Dclass} follows from the proof of \cite[Lemma~2.2.3]{Howie}. We are in a position to prove Main Theorem~\ref{main}, which states that every locally compact Nyikos inverse topological semigroup is compact. 

\begin{proof}[{\bf Proof of Main Theorem~\ref{main}}]
Consider a locally compact Nyikos inverse topological semigroup $S$. By Corollary~\ref{corNyikos}, $S$ is a topological inverse semigroup. Note that the semilattice of idempotents $E(S)$ is a retract of $S$ under the continuous map $\pi: x\mapsto xx^{-1}$. It follows that $E(S)$ is a locally compact Nyikos topological semilattice. Theorem~\ref{Nyikossemilattice} implies that $E(S)$ is compact. By Theorem~\ref{inverse}, $\beta S$ is a topological inverse semigroup. We claim that $E(S)=E(\beta S)$. Since $E(S)$ is compact, it suffices to show that $E(S)$ is dense in $E(\beta S)$. For this fix any $e\in E(\beta S)$ and its neighborhood $U$. Since $ee^{-1}=e$ and $\beta S$ is a topological inverse semigroup, there exists a neighborhood $V$ of $e$ such that $VV^{-1}\subseteq U$. Since $S$ is dense in $\beta S$, there exists $s\in S\cap V$. It follows that $ss^{-1}\in U\cap E(S)$. Since the neighborhood $U$ is chosen arbitrarily, we get that $e\in\cl_{\beta S}(E(S))$. Hence $E(S)$ is dense in $E(\beta S)$, which implies $E(S)=E(\beta S)$, as claimed.

Seeking a contradiction, assume that $\beta S\setminus S\neq \varnothing$. Since $S$ is locally compact, the remainder $\beta S\setminus S$ is closed and, consequently, compact. As $\beta S$ is a topological inverse semigroup, \cite[Proposition~3.8]{MPU} implies that $\beta S$ is a pospace with respect to the natural partial order. By Proposition~\ref{minposet}, $\beta S\setminus S$ contains a minimal element $h$. Let $e=hh^{-1}$ and $f=h^{-1}h$. Since $E(S)=E(\beta S)$, we have $e,f\in S$.  
Let $$T=\{x\in S: xx^{-1}\leq e \hbox{ and } x^{-1}x\leq f\}.$$ Fix a neighborhood $U$ of $h$. Since $ehf=h$, the continuity of the semigroup operation in $\beta S$ yields a neighborhood $V$ of $h$ such that $eVf\subseteq U$. Observe that for every $x\in S\cap V$ we have $exf\in T\cap U$. Since $U$ was chosen arbitrarily, $h\in \cl_{\beta S}(T)$.

Since $S$ is a topological inverse semigroup, the subgroups $H_e=\{x\in S\colon xx^{-1}=e=x^{-1}x\}$ and $H_f=\{x\in S\colon xx^{-1}=f=x^{-1}x\}$ are closed in $S$. Theorem~\ref{MU} implies that each closed subgroup of $S$ is compact. Thus, the sets $H_e$ and $H_f$ are compact. Observe that $H_e$ is the $\mathcal H$-class of the element $e$ and $H_f$ is the $\mathcal H$-class of the element $f$. 
Consider the $\mathcal H$-class $H_{e,f}:=\{x\in S\colon xx^{-1}=e \hbox{ and }x^{-1}x=f\}$. If $H_{e,f}\neq \varnothing$, then \cite[Proposition 5]{Lawson} implies that $H_{e,f}$ lies in the same $\mathcal D$-class with $H_e$ and $H_f$.  By Proposition~\ref{Dclass}, the set  $H_{e,f}$ is compact. Thus $h\in \cl_{\beta S}(T\setminus H_{e,f})$. Two cases are possible:
\begin{enumerate}[label=(\roman*)]
    \item $h\in \cl_{\beta S}(\{x\in S: xx^{-1}<e\hbox{ and }x^{-1}x\leq f\})$;
       \item $h\in \cl_{\beta S}(\{x\in S: xx^{-1}\leq e\hbox{ and }x^{-1}x< f\})$. 
\end{enumerate}

(i) Since $e=hh^{-1}$ and $\beta S$ is a topological inverse semigroup, for each neighborhood $U$ of $e$ there exists a neighborhood $V$ of $h$ such that $VV^{-1}\subseteq U$. By assumption, there is $x\in V$ such that $xx^{-1}<e$ and $xx^{-1}\in U$. It follows that $e\in \cl_S({\downarrow}e\setminus\{e\})$. Since $S$ is first-countable, there exists a sequence $\{e_n:n\in\w\}\subseteq {\downarrow}e\setminus\{e\}$ that converges to $e$. Since $eh=h$ we get that the sequence $\{e_nh:n\in\w\}$ converges to $h$. Note that $e_nh\neq h$ for each $n\in\w$, as otherwise $e=hh^{-1}=e_nhh^{-1}e_n=e_nee_n=e_n$ for some $n\in\w$, which contradicts the choice of $e_n$. Since $e_nhh^{-1}e_nh=e_nh$, we get that $e_nh\leq h$ with respect to the natural partial order on $\beta S$. Since $h$ is a minimal element in $\beta S\setminus S$, we get that $e_nh\in S$ for all $n\in\w$. It follows that $\{e_nh:n\in\w\}$ is an infinite closed discrete subset of $S$, which contradicts the countable compactness of $S$.

(ii) Since $f=h^{-1}h$ and $\beta S$ is a topological inverse semigroup, for each neighborhood $U$ of $f$ there exists a neighborhood $V$ of $h$ such that $V^{-1}V\subseteq U$. By assumption, there is $x\in V$ such that $f>x^{-1}x\in U$. It follows that $f\in \cl_S({\downarrow}f\setminus\{f\})$. Since $X$ is first-countable, there exists a sequence $\{f_n:n\in\w\}\subseteq {\downarrow}f\setminus\{f\}$ that converges to $f$. Since $hf=h$ we get that the sequence $\{hf_n:n\in\w\}$ converges to $h$. Similarly as above it can be checked that $h\notin\{hf_n:n\in\w\}$ and $hf_n\leq h$ for every $n\in\w$. Since $h$ is a minimal element in $\beta S\setminus S$, we get that $hf_n\in S$ for all $n\in\w$. It follows that $\{hf_n:n\in\w\}$ is an infinite  closed discrete subset of $S$, which contradicts the countable compactness of $S$. 

The obtained contradictions imply $S=\beta S$. 
\end{proof}

A {\em band} is a semigroup consisting of idempotents. A semigroup $S$ is called {\em simple} if $S$ contains no proper two-sided ideals or, in other words, $SxS=S$ for any $x\in S$. 
The following example shows that Main Theorem~\ref{main} does not generalize over simple bands.

\begin{example}
Let $X$ be one of the consistent examples of a locally compact noncompact Nyikos space discussed in the introduction. Let $X_1$ be the space $X$ endowed with the left zero operation, i.e., $xy=x$ for each $x,y\in X$. It is easy to see that this operation is continuous and associative, that is, $X_1$ is a topological semigroup. Let $X_2$ be the space $X$ endowed with the right zero operation, i.e., $xy=y$ for each $x,y\in X$. Similarly, one can check that $X_2$ is a topological semigroup.
Taking into account \cite[Corollary 3.10.15]{Eng}, it is easy to check that the finite product of Nyikos spaces remains Nyikos. Hence the direct product $S:=X_1{\times}X_2$ is a noncompact Nyikos topological semigroup. Note that $(a,b)(a,b)=(a,b)$ for every $(a,b)\in S$, which makes $S$ a band. Fix $(a,b)\in S$. Then for any $(c,d)\in S$ we have $(c,d)=(c,a)(a,b)(b,d)\in S(a,b)S$, witnessing that $S=S(a,b)S$. Since the point $(a,b)$ was chosen arbitrarily, the band $S$ is simple.  
\end{example}

The {\em Ostaszewski space} was constructed under ($\diamondsuit$) in~\cite{Ost}. It is a Nyikos space and possesses (among others) the following properties: locally compact, hereditary separable, each compact subspace is countable. 
The results of this paper establish another peculiar property of the Ostaszewski space.

\begin{proposition}
Let $S$ be the Ostaszewski space endowed with a continuous semigroup operation. Then each inverse subsemigroup of $S$ is countable.    
\end{proposition}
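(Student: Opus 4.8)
The plan is to argue by contradiction: suppose $T$ is an uncountable inverse subsemigroup of the Ostaszewski space $S$. The first step is to invoke the machinery built earlier in the paper. By Corollary~\ref{corNyikos}, since $S$ (and hence the closure $\overline{T}$, which is a topological semigroup containing $T$ densely) is a Nyikos topological semigroup with a dense inverse subsemigroup, $\overline{T}$ is a topological inverse semigroup; in particular its idempotent set $E(\overline{T})$ is a topological semilattice. Moreover $\overline{T}$ is locally compact, being a closed subspace of the locally compact space $S$, so $\overline{T}$ is a locally compact Nyikos inverse topological semigroup. By Main Theorem~\ref{main}, $\overline{T}$ is compact. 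But compact subspaces of the Ostaszewski space are countable, so $\overline{T}$ is countable, whence $T$ is countable — contradiction.

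Thus the whole argument is: (1) pass to $\overline{T}$, noting it is a closed, hence locally compact, Nyikos subspace of $S$ that contains $T$ as a dense inverse subsemigroup; (2) apply Corollary~\ref{corNyikos} to conclude $\overline{T}$ is a topological inverse semigroup; (3) apply Main Theorem~\ref{main} to conclude $\overline{T}$ is compact; (4) use the defining property of the Ostaszewski space (every compact subspace is countable) to conclude $\overline{T}$, and therefore $T$, is countable.

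The one point requiring a small amount of care — and the place I expect to be the main obstacle — is step (1): one must verify that $\overline{T}$, with the subspace topology and the inherited operation, really is a Nyikos space in its own right. Regularity and first-countability are hereditary, so they pass to $\overline{T}$ immediately. Countable compactness of $\overline{T}$ follows because $\overline{T}$ is a closed subspace of the countably compact space $S$ (the Ostaszewski space, being countably compact). Separability is the delicate one: $S$ is \emph{hereditarily} separable, so every subspace of $S$ — in particular $\overline{T}$ — is separable, which settles it. One also needs that $\overline{T}$ is a subsemigroup of $S$: this is standard, since the closure of a subsemigroup of a topological semigroup is again a subsemigroup (the multiplication map is continuous and $T\cdot T\subseteq T$ forces $\overline{T}\cdot\overline{T}\subseteq\overline{T}$). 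Local compactness of $\overline{T}$ is inherited because a closed subspace of a locally compact Hausdorff space is locally compact. With these verifications in hand the chain of implications closes with no further work.
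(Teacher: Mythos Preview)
Your proposal is correct and follows essentially the same route as the paper: pass to the closure $\overline{T}$, check it is a locally compact Nyikos topological semigroup containing a dense inverse subsemigroup (using hereditary separability for the separability of $\overline{T}$), invoke Corollary~\ref{corNyikos} and then Main Theorem~\ref{main} to get compactness, and conclude countability from the Ostaszewski property that compact subsets are countable. The only cosmetic difference is that the paper argues directly rather than by contradiction.
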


\begin{proof}
Let $Y$ be an inverse subsemigroup of $S$. Note that $\overline{Y}$ is a locally compact first-countable countably compact topological subsemigroup of $S$. Since $S$ is hereditary separable, $\overline{Y}$ is a locally compact Nyikos topological semigroup. Corollary~\ref{corNyikos} implies that $\overline{Y}$ is a topological inverse semigroup. By Main Theorem~\ref{main}, $\overline{Y}$ is compact. Since each compact subspace of the Ostaszewski space is countable, we get $|\overline{Y}|\leq\w$. Hence $|Y|\leq \w$, as required.    
\end{proof}

We finish this paper with the following conjecture.

\begin{conjecture}
Each Nyikos inverse topological semigroup is compact.    
\end{conjecture}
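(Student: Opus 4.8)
The plan is to prove the conjecture by upgrading the proof of Main Theorem~\ref{main}, whose sole use of the local compactness hypothesis must be eliminated. By Corollary~\ref{corNyikos} every Nyikos inverse topological semigroup $S$ is already a topological inverse semigroup, and this step needs only sequentiality (hence first countability), not local compactness. As in the proof of Main Theorem~\ref{main}, the idempotents form a retract of $S$ under $x\mapsto xx^{-1}$, so the semilattice $E(S)$ is again a Nyikos topological semilattice. Thus the decisive reduction is to the purely semilattice-theoretic assertion
$$(\star)\qquad\text{every Nyikos topological semilattice is compact.}$$
Granting $(\star)$, the remaining task is to bootstrap from $E(S)$ compact to $S$ compact.

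First I would attack $(\star)$. In the Tychonoff case, Theorem~\ref{semilattice} reduces it to showing that a Nyikos semilattice contains no chain isomorphic to $(\w_1,\min)$ or $(\w_1,\max)$, which is exactly the content of Propositions~\ref{w1} and~\ref{w2} minus the local compactness hypothesis. The encouraging point is that the combinatorial core of those propositions survives intact: Lemma~\ref{base} (every such chain has noncompact closure), Lemma~\ref{noproofi} (its closure is a topological copy of $\w_1$ with the order topology), Lemma~\ref{good} (some chain point lies in $\overline{X\setminus I_L}$), and Theorem~\ref{BBK} all avoid local compactness. So the plan is to rerun the proof of Proposition~\ref{w1} and to replace only the single step that invokes compact neighborhoods.

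The hard part will be precisely that step. In Proposition~\ref{w1} one takes the chain point $l_\alpha\in\overline{X\setminus I_L}$ supplied by Lemma~\ref{good}, chooses a compact neighborhood $W$ of $l_\alpha$, and multiplies a tail of the chain into $W$ to produce an honest $(\w_1,\min)$-chain $d_UL'$ with compact closure, contradicting Lemma~\ref{base}. Without compact neighborhoods the chain $d_UL'$ has noncompact closure, which is exactly what Lemma~\ref{base} predicts, so no contradiction is produced. My plan to repair this is to exploit the abundance of witnesses near $l_\alpha$: the point $l_\alpha$ lies in the closure of $X\setminus I_L$, so arbitrarily close to it sit elements $d$ each spawning its own copy $d\overline L\cong(\w_1,\min)$; using separability I would try to assemble these copies into a single separable subsemilattice that contains a copy of $\w_1$ whose complement is countable, contradicting Theorem~\ref{BBK} exactly as in Lemma~\ref{good}. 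A second, independent obstacle is that a general Nyikos semilattice need not be Tychonoff — the $\R$-rigid examples recalled in the introduction are not — so $\beta X$ and Theorem~\ref{BDG} are unavailable and even Theorem~\ref{semilattice} does not apply. For that case the plan is to prove intrinsically that a chain-compact Nyikos semilattice is compact, bypassing the Stone--\v{C}ech route.

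Finally, granting $(\star)$, I would bootstrap to $S$ without forming $\beta S$, since local compactness (which made the remainder $\beta S\setminus S$ closed) is gone and Tychonoffness may fail. Here $E(S)$ is compact, each maximal subgroup $H_e$ is closed and, by Theorem~\ref{MU}, compact, and by Proposition~\ref{Dclass} all $\mathcal H$-classes in a common $\mathcal D$-class are compact. Since a topological inverse semigroup is a pospace by \cite[Proposition~3.8]{MPU}, I would try to run the Green's-relations dichotomy from the proof of Main Theorem~\ref{main} directly inside $S$: a failure of compactness should yield, through first countability, an infinite closed discrete set of the form $\{e_nh\}$ or $\{hf_n\}$, contradicting countable compactness. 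The delicate point is manufacturing the minimal witness $h$ without the compact remainder to which Proposition~\ref{minposet} was applied; I expect this to be tractable once $(\star)$ pins down the idempotents, leaving the removal of local compactness from Propositions~\ref{w1} and~\ref{w2} as the genuine crux of the conjecture.
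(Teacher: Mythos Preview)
The statement you are attempting is Conjecture~5.4, which the paper explicitly leaves \emph{open}; there is no proof in the paper to compare against. Main Theorem~\ref{main} is the partial result under the additional hypothesis of local compactness, and the conjecture asks whether that hypothesis can be dropped.

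Your proposal is not a proof but a plan, and you have correctly located the genuine obstructions. The two places where local compactness is actually used are exactly the ones you name: (a) the compact-neighborhood step in Propositions~\ref{w1} and~\ref{w2}, and (b) the fact that $\beta S\setminus S$ is closed (hence compact) in the proof of Main Theorem~\ref{main}, which is what lets one invoke Proposition~\ref{minposet} to produce the minimal element $h$. You also correctly flag the separate difficulty that a general Nyikos space need not be Tychonoff, so the entire $\beta X$/$\beta S$ machinery (Theorems~\ref{BDG} and~\ref{semilattice}) may be unavailable. These are precisely the reasons the statement is recorded as a conjecture rather than a theorem.

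Where your plan falls short of a proof is that the proposed repairs are hopes rather than arguments. For (a), the idea of ``assembling the copies $d\overline L$ into a separable subsemilattice with countable complement of a copy of $\w_1$'' does not obviously work: each $d\overline L$ is itself uncountable, and there is no mechanism forcing these uncountable chains to coincide or to leave only countably many points outside, so Theorem~\ref{BBK} is not triggered. For the non-Tychonoff case, ``prove intrinsically that a chain-compact Nyikos semilattice is compact'' is a restatement of the problem, not a method; Theorem~\ref{H-closed} needs an ambient topological semigroup to embed into, and without $\beta X$ none is supplied. For (b), manufacturing a minimal witness $h$ without a compact remainder is a real issue: first countability gives you convergent sequences, but not a minimal element of a possibly noncompact, non-closed set. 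In short, your diagnosis is accurate and your outline is the natural line of attack, but the crux steps remain unresolved, which is consistent with the paper's decision to state this as a conjecture.
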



\subsection*{Acknowledgements}
The research of the author was funded in whole by the Austrian Science Fund FWF [10.55776/ESP399].

\end{document}